\theoremstyle{plain}
\newtheorem{theorem}{Theorem}[section]
\newtheorem{corollary}[theorem]{Corollary}
\newtheorem{proposition}[theorem]{Proposition}
\newtheorem{lemma}[theorem]{Lemma}
\newtheorem{question}[theorem]{Question}
\theoremstyle{definition}
\newtheorem{definition}[theorem]{Definition}
\newtheorem{example}[theorem]{Example}
\theoremstyle{remark}
\newtheorem{remark}[theorem]{Remark}
\numberwithin{equation}{section}\theoremstyle{plain}
\newcommand{\ootimes}{\overline{\otimes}}
\renewcommand{\1}{\textbf{1}}
\newcommand{\Hh}{\mathfrak{H}}
\newcommand\Opext{\operatorname{Opext}}
\newcommand{\A}{{\mathcal A}}
\newcommand{\Ag}{\mathfrak A}
\newcommand{\Bb}{{\mathfrak B}}
\newcommand{\B}{{\mathcal B}}
\newcommand{\C}{{\mathcal C}}
\newcommand{\D}{{\mathcal D}}
\newcommand{\Z}{{\mathcal Z}}
\newcommand{\Zz}{{\mathbb Z}}
\newcommand{\M}{\mathcal{M}}
\newcommand{\N}{\mathcal{N}}
\newcommand{\toto}{\longrightarrow}
\newcommand{\E}{{\mathcal E}}
\newcommand{\Aa}{\mathbb A}
\newcommand{\Ss}{\mathbb S}
\newcommand{\Rep}{\operatorname{Rep}}
\newcommand{\KER}{\mathfrak{Ker}}
\newcommand{\modd}{\mbox{-}\!\operatorname{mod}}
\newcommand{\comod}{\operatorname{comod}\!-\!}
\newcommand\CoRep[1]{\operatorname{comod}\!\mbox{-} #1}
\newcommand\rep{\operatorname{Rep}}
\newcommand\coend{\operatorname{coend}}
\newcommand\Aut{\operatorname{Aut}}
\newcommand\FPdim{\operatorname{FPdim}}
\newcommand\vect{\operatorname{Vec}}
\newcommand\id{\operatorname{id}}
\newcommand\Hker{\operatorname{Hker}}
\newcommand\Hcoker{\operatorname{Hcoker}}
\newcommand\End{\operatorname{End}}
\newcommand\op{\operatorname{op}}
\newcommand\Hom{\operatorname{Hom}}
\newcommand{\uno}{ \mathbf{1}}
\begin{document}

%\rightline{Due March 31, 2020. 25--30 pages.}

\title[EXACT SEQUENCES: FROM HOPF ALGEBRAS TO TENSOR CATEGORIES]{ON THE NOTION OF EXACT SEQUENCE: FROM HOPF ALGEBRAS TO TENSOR CATEGORIES}
\author{Sonia Natale}
\address{Facultad de Matem\'atica, Astronom\'\i a,  F\'\i sica y Computaci\' on.
Universidad Nacional de C\'ordoba. CIEM -- CONICET. Ciudad
Universitaria. (5000) C\'ordoba, Argentina}
\email{natale@famaf.unc.edu.ar
\newline \indent \emph{URL:}\/ http://www.famaf.unc.edu.ar/$\sim$natale}

\thanks{Partially supported by  CONICET and SeCYT--UNC}

\keywords{Hopf algebra; tensor category; fusion category; exact sequence}

\subjclass[2010]{18D10; 16T05}

\date{\today}

\begin{abstract} We present an overview of the notions of exact sequences of Hopf algebras and tensor categories and their connections. We also present some
examples illustrating their main features; these include simple fusion categories and a natural question regarding composition series of finite tensor categories. 
\end{abstract}

\maketitle

\setcounter{tocdepth}{2} \tableofcontents

\section{Introduction}

Perhaps the most natural way to understand or attempt a classification of a given algebraic structure is by decomposing it into 'simpler' structures. A fundamental example of this principle is provided by the theory of finite groups. Every finite simple group $G$ has a composition series whose factors are finite simple groups uniquely determined by $G$, up to permutations. In particular,  $G$ can be built up by means of a number of successive extensions of finite simple groups. In this context, both the classification of the simple structures -finite simple groups- and the classification of the possible extensions -governed by suitable cohomology theories- are hard questions that involve deep mathematical techniques. 

\medbreak In this paper we aim to present an overview of an analogous approach to the study of structures that generalize that of groups, namely, Hopf algebras and tensor categories. We discuss the notion of extension in each of these contexts and some of their main features. More precisely, we focus on extensions arising from exact sequences: these do not include certain extensions arising from group gradings on tensor categories which play an important role in the classification of certain classes of fusion categories \cite{ENO2}. 

\medbreak 
Very little is known about a possible approach to classify simple structures in these cases.  We discuss several simple examples that show some contrast with the theory of finite groups. 

\medbreak  
We start by recalling the notion of exact sequence of Hopf algebras in Section \ref{exact-hopf}. The main contributions towards this notion appeared in the work of G. I. Kac \cite{kac}, M. Takeuchi \cite{takeuchi}, W. Singer \cite{singer}, B. Parshall and J. P. Wang \cite{PW}, H.-J. Schneider \cite{schneider}, S. Majid \cite{majid}, N. Andruskiewitsch and J. Devoto \cite{AD}, \cite{andrus-ext}, Hofstetter \cite{hofstetter} and others.  

Exact sequences of Hopf algebras generalize exact sequences of groups. In a similar vein, the notion of normal subgroup and simple group have generalizations  to the notions of normal Hopf subalgebra and simple Hopf algebra, that we discuss in Subsections \ref{nhs} and \ref{subs-simple}. 

\medbreak 
As in the case of finite groups, every finite dimensional Hopf algebra $H$ has a \emph{composition series}: these are sequences of simple Hopf algebras $\Hh_1, \dots, \Hh_n$, called the \emph{factors} of the series, defined as follows \cite{AM}: If $H$ is simple, then $n = 1$ and $\Hh_1 = H$; 
if on the other hand, $H$ contains a  proper normal Hopf subalgebra $A$, and
$\Ag_1,
\dots, \Ag_m$, $\Bb_1, \dots,$ $\Bb_\ell$, are composition series of $A$ and of the quotient Hopf algebra $B = H/HA^+$,
respectively, then $n = m+\ell$ and
$$\Hh_i = \Ag_i, \quad \textrm{if } 1\leq i \leq m, \quad \Hh_i = \Bb_{i-m},
\quad \textrm{if } m < i \leq m+\ell.$$
Furthermore, a Jordan-H\" older theorem holds in the context of finite dimensional Hopf algebras \cite{jh-hopf}. This and related facts are discussed in Subsections \ref{subs-jh} and \ref{subs-semis}.

\medbreak In Section \ref{exact-rk1} we review the notion of exact sequence of tensor categories developed with A. Brugui\` eres  in \cite{tensor-exact}, \cite{indp-exact} and its relation with Hopf monads and commutative central algebras. We also discuss in this section a family of examples arising from so-called crossed actions of a matched pair of finite groups \cite{crossed-action} and recent classification results for extensions of a fusion category by the category of representations of a finite group obtained in \cite{char-crossed-action}.

\medbreak The notion of exact sequence of tensor categories discussed in Section \ref{exact-rk1} was extended to the notion of exact sequence  with respect to a module category by P. Etingof and S. Gelaki \cite{eg-emc}: In this sense, an exact sequence of (finite) tensor categories becomes an exact sequence with respect to a rank-one module category. We overview the definition of \cite{eg-emc} in Section \ref{s-emc} as well as its connection with exact  factorizations of tensor categories from \cite{gelaki},  \cite{mn}, \cite{schauenburg-kac}.

\medbreak We end the paper by presenting some examples that answer a number of natural questions regarding the behaviour of exact sequences of tensor categories. This is done in  Section \ref{examples-questions}. In contrast with the properties enjoyed by groups and Hopf algebras, these examples show in particular that:
\begin{itemize}
\item Fusion subcategories of index 2 need not be normal with respect to a module category (Subsection \ref{indice2});
\item There exist fusion categories of Frobenius-Perron dimension $p^aq^b$, where $p$ and $q$ are prime numbers, which are simple with respect to any module category (Subsection \ref{simple-paqb}).
\end{itemize}

\medbreak 
Regarding the question of formulating a Jordan-H\"older theorem for finite tensor categories (see \cite[Question 4.7]{icm}), we consider a natural generalization of the notion of composition series of finite dimensional Hopf algebra to the context of finite tensor categories: namely, we call a sequence of finite tensor categories $\C_1, \dots, \C_n$ -the factors of the series- a composition series of a finite tensor category $\C$ if, as before,  $n = 1$ and $\C_1 = \C$ if $\C$ is does not fit into any exact sequence with respect to a module category, while if $\C' \toto \C \toto \C'' \boxtimes \End(\M)$ is an exact sequence with respect to some $\C'$-module category $\M$ such that $\FPdim \C', \FPdim \C'' > 1$, then $n = m+\ell$ and
$\C_i = \C'_i$, $1\leq i \leq m$, $\C_i = \C''_{i-m}$, $m < i \leq m+\ell$, where  $\C'_1, \dots, \C'_m$, $\C''_1, \dots,$ $\C''_\ell$, are composition series of $\C'$ and $\C''$, respectively. 

\medbreak 
One of the main new contributions of this paper is a negative answer to the 
following question:

\begin{question}\label{jh-mc} Is it true that two composition series of a finite tensor category thus defined have the same factors up to a permutation?
\end{question}

Indeed we show that the answer to Question \ref{jh-mc} is negative even in the context of (braided non-degenerate) fusion categories. Hence composition series of fusion categories thus defined fail to satisfy a Jordan-H\"older theorem. This is done in Subsection \ref{subs-jh}; see Corollary \ref{cs-a6} and Remark \ref{bd-cs-a6}.

In spite of this fact, we mention that an analogue of the Jordan-H\" older theorem does hold for weakly group-theoretical fusion categories introduced in \cite{ENO2}. The definition of a composition series for this kind of category is given in terms of group equivariantizations and group graded extensions and the composition factors, which are Morita invariants, are finite simple groups \cite{jh-wgt}.  

\medbreak Finally, we include in Subsection \ref{preguntas} some questions that we believe are interesting in relation with the notions discussed previously.

\section{Preliminaries on Hopf algebras and tensor categories}

We shall work over an algebraically closed field $k$ of characteristic zero. 
A \emph{tensor category} over $k$ is 
a $k$-linear abelian category with finite dimensional $\Hom$
spaces and objects of finite length, endowed with a rigid monoidal category
structure, such that the monoidal product is $k$-linear in each variable
and the unit object is simple. 

A tensor category over $k$ is called  \emph{finite} if it is equivalent as a
$k$-linear category to the category of finite dimensional left modules over a
finite dimensional $k$-algebra. A \emph{fusion category over $k$} is a
semisimple finite tensor category. We refer the reader to \cite{EGNO} for a systematic study of tensor categories.

\medbreak 
Let $\C$ and $\D$ be tensor categories over $k$. A \emph{tensor functor} $F: \C \to \D$ is a $k$-linear exact (strong) monoidal functor $F$.

\medbreak
Let $G$ be a group. We shall denote by $\vect_{G}$ the tensor category of finite dimensional $G$-graded vector spaces and by $\Rep G$ the tensor category of finite dimensional $k$-linear representations of $G$.

\medbreak Let $H$ be a Hopf algebra over $k$. We shall indicate by $H^+$ the \emph{augmentation ideal} of $H$, defined as $H^+=\{x \in H:\, \varepsilon(x)=0\}$.
Except for Section \ref{exact-hopf}, all Hopf algebras we consider in this paper are assumed to have a bijective antipode (this is automatically true if the Hopf algebra is finite dimensional). Thus the categories $H\modd$ of finite dimensional (left) $H$-modules and $\CoRep H$ of finite dimensional (right) $H$-comodules are tensor categories over $k$. 

\medbreak
Let $\C$ be a tensor category. A \emph{left module category} over  $\C$, or \emph{$\C$-module category}, is a  $k$-linear abelian category $\mathcal{M}$ together with a $k$-linear bi-exact functor $$\ootimes:\mathcal{C}\times\mathcal{M}\rightarrow\mathcal{M},$$ and natural isomorphisms
$m_{X,Y,M}:(X\otimes Y)\ootimes M\rightarrow X\ootimes(Y\ootimes M)$, $u_M:\textbf{1}\ootimes M\rightarrow M$, $X, Y \in \C$, $M \in \mathcal M$, satisfying natural associativity and unitary conditions.

\medbreak 
Let $\M$ and $\N$ be $\C$-module categories. A $\C$-\emph{module functor} $\M \to \N$ is a $k$-linear functor $F:\M \to \N$ endowed with natural isomorphisms $$F (X \ootimes M) \to X \ootimes F(M),$$ for all $X \in \C$, $M \in \M$, satisfying appropriate conditions. A $\C$-module functor is called an equivalence of $\C$-module categories if it is an equivalence of categories. A module category $\M$ is called \emph{indecomposable} if it is not equivalent as a $\C$-module category to the direct sum of two nontrivial module categories.

\medbreak 
Suppose $\C$ is a finite tensor category. A finite left $\C$-module category $\M$ is \emph{exact} if for every projective object $P\in \C$ and for every $M\in\M$, $P\otimes  M$ is a projective object of $\M$.
Let $\M$ be an  indecomposable exact $\C$-module category. Then the category $\End_\C(\M)$ of right exact $\C$-module endofunctors of $\mathcal{M}$ is a finite tensor category.
A tensor category $\mathcal{D}$ is \emph{(categorically) Morita equivalent} to $\mathcal{C}$  if  $\mathcal{D}\cong \End_\C(\M)^{\text{op}}$ for some exact  indecomposable $\mathcal{C}$-module category $\mathcal{M}$.

\section{Exact sequences of Hopf algebras}\label{exact-hopf}

Let $H$ be a  Hopf algebra over $k$. The aim of this section is to give an account of some of the main features regarding the notion of exact sequences of Hopf algebras and present some examples. 

\medbreak 
Let  $\pi: H \to B$ be a Hopf algebra map. The subalgebras ${}^{co \pi}H$ and $H^{co \pi}$ of 
left and right $B$-coinvariants of $H$ are defined, respectively, by
\begin{align*}{}^{co \pi}H & = \{ h \in   H:\, (\pi \otimes \id)\Delta(h) = 1
\otimes h\}, \\
H^{co \pi} & = \{ h \in   H:\, (\id \otimes \pi)\Delta(h) = h \otimes
1\}.\end{align*}

\begin{definition}\label{def-exacta-hopf}(\cite{AD}.)
An \emph{exact sequence of Hopf algebras} is a sequence of Hopf algebra maps 
\begin{equation}\label{exacta} k \longrightarrow H' \overset{i}\longrightarrow H
\overset{\pi}\longrightarrow H'' \longrightarrow k,
\end{equation}
satisfying the following conditions:
\begin{itemize}
	\item[(a)] $i$ is injective and $\pi$ is surjective,
\item[(b)] $\ker \pi = Hi(H')^+$,
\item[(c)] $i(H') = {}^{co \pi}\!H$.
\end{itemize}

A Hopf algebra $H$ fitting into an exact sequence \eqref{exacta} is called an \emph{extension} of $H''$ by $H'$.
\end{definition}

Note that either of the conditions (b) or (c) in Definition \ref{def-exacta-hopf} implies that $\pi i = \epsilon 1: H' \to H''$. If $H$ is
faithfully
flat over  $H'$, then (a) and (b) imply (c).  Dually, if $H$ is
faithfully coflat over $H''$, then (a) and (c)
imply (b).

\medbreak 
We refer the reader to \cite{AD}, \cite{schneider}, \cite{takeuchi} for further details on the notion of an exact sequence. 

\medbreak 
Let \eqref{exacta} be an exact sequence of finite dimensional Hopf algebras. Then, as a consequence of the Nichols-Zoeller freeness theorem, it is \emph{cleft}, that is, the map $\pi:H \to B$ admits a convolution invertible $B$-colinear and $A$-linear section $B \to H$. 
This implies that $H$ isomorphic as a Hopf algebra to a bicrossed product $A\# B$ with respect to suitable compatible data. See \cite{schneider-nb}, \cite{AD}. In particular $$\dim H = \dim H' \dim H''.$$ 

In addition, $H$ is semisimple if and only if $H'$ and $H''$ are semisimple.

\begin{definition} We shall say that a Hopf algebra $H$ is  \emph{simple} if it does not fit into any exact sequence of Hopf algebras \eqref{exact-hopf} with $H' \ncong k$ and $H'' \ncong k$.	
\end{definition}

\subsection{Normality of Hopf subalgebras and Hopf algebra maps}\label{nhs} Consider the left and right adjoint actions of $H$ on itself defined, respectively, by
\begin{equation}\label{adj-act}
h. a = h_{(1)}a\mathcal S(h_{(2)}), \qquad a.h= \mathcal S(h_{(1)})ah_{(2)},
\end{equation}
for all $a, h \in H$. 

\medbreak Formulas \eqref{adj-act} generalize those defining the adjoint actions of a group. In this vein, the notion of a normal subgroup can be generalized to that of a normal Hopf subalgebra; that is, a Hopf subalgebra $K$ of $H$ is called \emph{normal} if it is stable under both action actions \eqref{adj-act}. 

\medbreak 
Dually, the left and right
adjoint coactions of $H$ are defined, respectively, by 
\begin{align*}
&\rho_\ell: H \to H \otimes H, \quad \rho_\ell(h) = h_{(1)}\mathcal S(h_{(3)}) \otimes h_{(2)},\\
&\rho_r: H \to H \otimes H, \quad \rho_r(h) = h_{(2)} \otimes h_{(1)}\mathcal S(h_{(3)}).
\end{align*}

Let $\pi: H \to H''$ be a Hopf
algebra map. The map $\pi$ is called \emph{normal} if the kernel $I$ of $\pi$ is a subcomodule for both adjoint coactions of $H$.

\medbreak
Let $H' \subseteq H$ be a normal Hopf subalgebra. Then $H(H')^+ = (H')^+H$
is a Hopf ideal of $H$ and the canonical map $H \to H/H(H')^+$ is a Hopf algebra map. If $H$ is faithfully flat over $H'$, then there is
an exact sequence of Hopf algebras $$k \longrightarrow H' \longrightarrow H
\longrightarrow H/H(H')^+ \longrightarrow
k.$$ Similarly, if $\pi: H \to H''$ is a surjective normal Hopf
algebra map, then ${}^{co \pi}H = H^{co \pi}$ is a Hopf subalgebra and if $H$ is faithfully coflat over $H''$, there is an exact sequence of Hopf algebras $$k \longrightarrow {}^{co \pi}H
\longrightarrow H \overset{\pi}{\longrightarrow} H'' \longrightarrow k.$$

\medbreak 
Suppose $H$ is a finite dimensional Hopf algebra. Then $H$ is simple if and only if $H$ contains no proper normal Hopf subalgebra if and only if it admits no proper normal quotient Hopf algebra.

Furthermore, a sequence of Hopf algebra maps $$k \longrightarrow H'
\overset{i}\longrightarrow H
\overset{\pi}\longrightarrow H'' \longrightarrow k$$ is an exact sequence if and only if the dual sequence 
$$k \longrightarrow (H'')^* \overset{\pi^*}\longrightarrow H^*
\overset{i^*}\longrightarrow (H')^* \longrightarrow k$$ is exact.
Therefore the notion of simplicity of a finite dimensional Hopf algebra is self-dual, that is, $H$ is simple if and only if $H^*$ is simple. 

\subsection{Example: Abelian exact sequences and matched pairs of groups}

A celebrated source of examples of Hopf algebra extensions arises from matched pairs of groups: these are called \emph{abelian extensions} and were introduced in the early work of G. I. Kac, W. Singer, S. Majid and M. Takeuchi \cite{kac}, \cite{singer}, \cite{majid}, \cite{takeuchi-mp}. We refer the reader to \cite{mk-ext}, \cite{ma-newdir} for a detailed study of the cohomology theory underlying an abelian exact sequence.

\begin{definition}
An exact sequence of finite dimensional Hopf algebras 
\begin{equation}\label{abel-hopf}
k \longrightarrow H'
\overset{i}\longrightarrow H
\overset{\pi}\longrightarrow H'' \longrightarrow k\end{equation} is called an \emph{abelian exact sequence} if $H'$ is commutative and $H''$ is cocommutative. 
\end{definition}
	
Since our base field $k$ is algebraically closed of characteristic zero, this means that $H' \cong k^\Gamma$ and $H'' \cong kG$, for some finite groups $\Gamma, G$. 

\medbreak 
The exactness of the sequence \eqref{abel-hopf} allows to endow the pair $(G, \Gamma)$ with the structure of a \emph{matched pair} of groups. That
is, \eqref{abel-hopf} gives rise to actions by permutations
$\Gamma \overset{\vartriangleleft}\longleftarrow \Gamma \times G
\overset{\vartriangleright}\longrightarrow G$ such that
\begin{equation}\label{matched}
s \vartriangleright xy  = (s \vartriangleright x) ((s
\vartriangleleft x) \vartriangleright y), \quad st
\vartriangleleft x  = (s \vartriangleleft (t \vartriangleright x))
(t \vartriangleleft x), \end{equation} for all $s, t \in \Gamma$,
$x, y \in G$.

\medbreak 
Given groups $G$ and $\Gamma$, the data of a pair
of compatible actions making $(G, \Gamma)$ into a matched pair of groups is equivalent to the data of a group $E$ together
with an \emph{exact factorization} into  subgroups (isomorphic to) $G$ and $\Gamma$: that is, a group $E$ such that $E = G \Gamma$ and $G \cap \Gamma = \{e\}$. In such situation, the relevant
actions $\vartriangleright$ and $\vartriangleleft$ are determined by the relations $$gx = (g \vartriangleright
x)(g \vartriangleleft x),$$ for every $x \in G$, $g \in \Gamma$.

\medbreak Fix a matched pair of finite groups $(G, \Gamma)$. Consider the left action of $G$ on $k^\Gamma$ defined by $(x.
f)(g) = f(g \vartriangleleft x)$, $f \in k^\Gamma$, and let
$\sigma: G \times G \to (k^*)^\Gamma$ be a normalized
2-cocycle. Dually, consider the right action of $\Gamma$ on
$k^G$ given by $(w.g)(x) = w(x \vartriangleright g)$, $w \in k^G$, and let
$\tau: \Gamma \times \Gamma \to (k^*)^G$ be a normalized
2-cocycle.

\medbreak 
Under appropriate compatibility conditions between $\sigma$ and
$\tau$, the vector space $H = k^\Gamma \otimes k G$  becomes a
(semisimple) Hopf algebra, denoted $H = k^\Gamma \,
{}^{\tau}\#_{\sigma}kG$, with the crossed product algebra
structure and the crossed coproduct coalgebra structure; see \cite{mk-ext},\cite{ma-newdir}. For all
$g,h\in \Gamma$, $x, y\in G$, we have
\begin{align}\label{mult} (e_g \# x)(e_h \# y) & = \delta_{g \vartriangleleft x, h}\, \sigma_g(x,
y) e_g \# xy, \\
\label{delta} \Delta(e_g \# x) & = \sum_{st=g} \tau_x(s, t)\, e_s
\# (t \vartriangleright x) \otimes e_{t}\# x,
\end{align}where $\sigma_s(x, y) = \sigma(x, y)(s)$ and $\tau_x(s, t) = \tau(s, t)(x)$, $s, t\in \Gamma$, $x, y\in G$.

\medbreak 
Let $i: k^\Gamma \to H = k^\Gamma \,
{}^{\tau}\#_{\sigma}kG$ and $\pi: H = k^\Gamma \,
{}^{\tau}\#_{\sigma}kG \to kF$ be the Hopf algebra maps defined by $i(f) = f \# e$, $\pi(f \# g) = \varepsilon(f) g$, $f \in k^\Gamma$, $g \in G$. The Hopf algebra $H$ fits into an abelian exact sequence 
$$k \toto k^\Gamma \overset{i}\toto H \overset{\pi}\toto kG \toto k.$$ 

Moreover, every Hopf algebra $H$
fitting into an abelian exact sequence \eqref{abel-hopf} is isomorphic to a bicrossed product $k^\Gamma \, {}^{\tau}\#_{\sigma}kF$ for an appropriate matched pair $(G, \Gamma)$ and compatible
actions and cocycles $\sigma$ and $\tau$. Equivalence classes of
such extensions associated to a fixed matched pair  $(G, \Gamma)$
form an abelian group $\Opext(k^\Gamma, kG)$, whose unit element
is the class of the \emph{split} extension $k^\Gamma \# kG$.

\medbreak 
An example of an abelian extensions is given by the Drinfeld double $D(G)$ of a finite group $G$: the Hopf algebra $D(G)$	fits into an exact sequence $$k \longrightarrow k^G \longrightarrow D(G)	\longrightarrow kG \longrightarrow k.$$ In the associated matched pair $(G, G)$, $\triangleleft: G \times G \to G$ is the adjoint action of $G$ on itself, while $\triangleright: G\times G \to G$ is the trivial action. 

\subsection{Jordan-H\" older theorem for finite dimensional Hopf algebras}\label{subs-jh}

The question of establishing an analogue of the Jordan-H\" older theorem of group theory for finite dimensional Hopf algebras was raised by N. Andruskiewitsch in \cite[Question
2.1]{andrus} and answered in \cite{jh-hopf}. 

\medbreak 
We start by recalling the followig definition given in \cite{AM}. 

\begin{definition}\label{def-compser} Let $H$ be a finite dimensional Hopf algebra over a field $k$. 
A \emph{composition series} of $H$ is a
sequence of finite dimensional simple Hopf algebras $\Hh_1, \dots, \Hh_n$
defined recursively as follows: If $H$ is simple, we let $n = 1$ and $\Hh_1 = H$.
If $k \subsetneq A \subsetneq H$ is a normal Hopf subalgebra, and
$\Ag_1,
\dots, \Ag_m$, $\Bb_1, \dots,$ $\Bb_\ell$, are composition series of $A$ and $B =
H/HA^+$,
respectively, then we let $n = m+\ell$ and
$$\Hh_i = \Ag_i, \quad \textrm{if } 1\leq i \leq m, \quad \Hh_i = \Bb_{i-m},
\quad \textrm{if } m < i \leq m+\ell.$$
The Hopf algebras $\Hh_1, \dots, \Hh_n$ are called the \emph{factors} of
the series.   The number $n$ is called the \emph{length} of the series.
\end{definition}

\medbreak 
Every finite dimensional Hopf algebra admits a composition series.
The following is an analogue of the Jordan-H\" older theorem for finite dimensional Hopf algebras:

\begin{theorem}\emph{(\cite[Theorem 1.2]{jh-hopf}.)}
Let  $\Hh_1, \dots, \Hh_n$ and $\Hh'_1, \dots, \Hh'_m$ be two composition series of $H$.
Then there exists a bijection $f: \{1, \dots, n\} \to \{1, \dots, m\}$ such that $\Hh_i \cong \Hh'_{f(i)}$ as Hopf algebras.
\end{theorem}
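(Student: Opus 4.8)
The plan is to mimic the classical Jordan--H\"older argument for finite groups, organized as an induction on $\dim H$, with the key technical input being a \emph{Schreier-type refinement} or, equivalently, a \emph{butterfly (Zassenhaus) lemma} adapted to normal Hopf subalgebras. First I would set up the induction: if $H$ is simple, then both series have length one and consist of $H$ itself, so there is nothing to prove. Otherwise $H$ admits a proper normal Hopf subalgebra, and each of the two composition series arises from a choice of such a subalgebra, say $A$ and $A'$, together with composition series of $A$, $A'$ and of the corresponding quotients $B = H/HA^+$ and $B' = H/H(A')^+$. Since $\dim A, \dim B < \dim H$ (using that $\dim H = \dim A \cdot \dim B$ from cleftness, as recalled in the excerpt) the inductive hypothesis applies to all of these smaller Hopf algebras.

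The heart of the matter is to compare the two initial choices $A$ and $A'$. Following the group-theoretic template, I would form the ``intersection'' $A \cap A'$ and the ``product'' Hopf subalgebra generated by $A$ and $A'$, and analyze the relevant subquotients. The aim is to produce a common refinement: I would show that $A \cap A'$ is normal in both $A$ and $A'$, and that one has Hopf-algebra isomorphisms of the relevant subquotients of the form
\begin{equation}\label{eq:butterfly}
A/A(A\cap A')^+ \;\cong\; (AA')/(AA')A'^+, \qquad A'/A'(A\cap A')^+ \;\cong\; (AA')/(AA')A^+,
\end{equation}
which is the Hopf-algebraic incarnation of the second isomorphism theorem. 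Interposing $A \cap A'$ between $k$ and $A$ (respectively $A'$) refines each of the two series, and \eqref{eq:butterfly} together with the inductive hypothesis matches up the resulting factors. One then invokes induction on the factors of $A$, $A'$, and the common quotient layers to conclude that the two original series have the same factors up to permutation.

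The main obstacle I expect is establishing the Hopf-algebraic analogues of the isomorphism theorems used in \eqref{eq:butterfly} --- in particular, making sense of the ``product'' $AA'$ as a Hopf subalgebra, verifying the requisite normality relations among $A \cap A'$, $A$, $A'$ and $AA'$, and ensuring that all the exact sequences one writes down genuinely satisfy conditions (a)--(c) of Definition \ref{def-exacta-hopf}. In the group case these steps are elementary, but in the Hopf setting one must control coinvariants, faithful (co)flatness, and the behaviour of augmentation ideals under these operations; the finite-dimensionality hypothesis (hence automatic faithful flatness and cleftness) is what should make these manipulations tractable. A secondary, more bookkeeping-heavy point is to verify that the correspondence between normal Hopf subalgebras and normal quotient Hopf algebras (noted in Subsection \ref{nhs}) is compatible with passing to subquotients, so that refinements of one series correspond bijectively to refinements of the other. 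Once these structural lemmas are in place, the combinatorial conclusion follows exactly as in the classical Schreier refinement argument.
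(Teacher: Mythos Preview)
The paper is a survey and does not prove this theorem; it only cites \cite[Theorem 1.2]{jh-hopf}. The sole indication of method appears in Subsection~\ref{subs-semis}, where it is recorded that \cite{jh-hopf} establishes Hopf-algebraic analogues of the Zassenhaus butterfly lemma and the Schreier refinement theorem and then follows the classical group-theoretic argument. Your proposal is precisely in this spirit, so it matches what the paper tells us about the original approach.

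One remark on the organization. You speak of ``refining'' each series by interposing $A\cap A'$ between $k$ and $A$, which is the language appropriate to chains of subalgebras. For the recursive Definition~\ref{def-compser} this framing is slightly off: a composition series of $H$ is not a single subnormal chain inside $H$, since the factors coming from $B=H/HA^+$ live in a quotient and normal Hopf subalgebras of $B$ need not lift to normal Hopf subalgebras of $H$ (this is exactly why the lower composition factors of Subsection~\ref{subs-semis} can differ from the composition factors here). The induction you actually want runs as follows: by the inductive hypothesis the composition factors of each of $A$, $A'$, $B$, $B'$ are already well-defined; now choose $A\cap A'$ as normal Hopf subalgebra of $A$ (and of $A'$), choose the image of $A'$ as normal Hopf subalgebra of $B$ (and symmetrically for $B'$), and use your butterfly isomorphisms $A/A(A\cap A')^+ \cong AA'/(AA')(A')^+$ together with the common pieces $A\cap A'$ and $H/H(AA')^+$ to match the two resulting four-term decompositions. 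The technical prerequisites you flag---that $AA'$ is a normal Hopf subalgebra, that $A\cap A'$ is normal in $A$ and $A'$, and that the second-isomorphism-type identifications hold as Hopf algebra maps---are indeed the substance of the argument, and finite-dimensionality (hence faithful flatness and cleftness) is what makes them go through.
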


Let $H$ be a finite dimensional Hopf algebra and let
$\Hh_1, \dots,$ $\Hh_n$
be a composition series of $H$. The simple Hopf algebras $\Hh_i$, $1\leq i\leq
n$, are called the \emph{composition factors of $H$}. The number $n$ is 
called the \emph{length of $H$}.

\medbreak 
Let us consider for instance the case of abelian extensions. Let $G$ and $\Gamma$ be finite groups and let $H$ be
an abelian extension of $k^\Gamma$ by $kG$. Then the
composition factors of $H$ are the	group algebras of the composition factors of $G$ and the dual group algebras of	the composition factors of $\Gamma$. See \cite[Example 4.7]{jh-hopf}.

\medbreak 	
In particular,  if $G_1, \dots, G_n$ are  the composition	factors of the finite group $G$, then the composition factors of its Drinfeld double $D(G)$ are the	Hopf algebras  $k^{G_1},	\dots, k^{G_n}, kG_1, \dots, kG_n$. 

\subsection{Semisolvability and related questions}\label{subs-semis}
A \emph{lower subnormal
	series} of a Hopf algebra $H$ is
a series of Hopf subalgebras
\begin{equation}\label{lowerseries}k = H_{n}  \subseteq H_{n-1}
\subseteq  \dots
\subseteq H_1 \subseteq H_0 = H, \end{equation} such that $H_{i+1}$ is a
normal Hopf subalgebra of $H_i$, for all $i$. The \emph{factors} of the series
\eqref{lowerseries} are the quotient Hopf algebras $H_i/H_iH_{i+1}^+$, $i = 0, \dots, n-1$.

\medbreak 
Dually, an \emph{upper
	subnormal series} of $H$ is a series of surjective Hopf
algebra maps
\begin{equation}\label{upperseries}H = H_{(0)} \to H_{(1)}
\to \dots \to H_{(n)} = k,
\end{equation} such that $H_{(i+1)}$ is a normal quotient Hopf algebra of
$H_{(i)}$, for all $i = 0, \dots, n-1$. The \emph{factors} of
\eqref{upperseries} are the Hopf algebras ${}^{co \,
	H_{(i+1)}}H_{(i)} \subseteq H_{(i)}$, $i = 0, \dots, n-1$.

\medbreak A \emph{lower} (respectively \emph{upper}) \emph{composition series} of $H$ is a lower (respectively, upper) subnormal series which does not	admit any proper refinement. See \cite[Section 5]{jh-hopf}.

\medbreak
A Hopf algebra $H$ is called \emph{lower-semisolvable} (respectively, \emph{upper-semisolva\-ble}) if it admits a lower (respectively, upper) subnormal series whose factors are commutative or cocommutative \cite{MW}. We shall say that $H$ is \emph{semisolvable} if it is either lower or upper semisolvable. 

\medbreak 
Every semisolvable finite dimensional Hopf algebra is semisimple and cosemisimple.
On the other hand, every semisimple Hopf algebra of dimension $p^n$, $p$ a prime number, is semisolvable \cite{masuoka}, \cite{MW}. Moreover, if the group $G$ is nilpotent then any twisting of the group Hopf algebra $kG$ is semisolvable \cite{GN}.	

\medbreak 
In \cite{jh-hopf} we proved analogues of the Zassenhaus' butterfly	lemma and the Schreier's refinement theorem  for finite dimensional Hopf algebras and, following the lines of the classical proof  in	group theory, applied  them to prove an analogue of the	Jordan-H\" older theorem for lower and upper composition series of $H$. 

\medbreak 	
Thus the lower and upper composition factors of $H$ and its lower and upper lengths, which are also well-defined invariants of $H$, were introduced. In contrast with the case of the composition factors, the lower or upper	composition factors are not necessarily simple as Hopf algebras. This motivates the question of deciding if there is an	intrinsic characterization of the Hopf algebras that can arise as lower composition factors \cite[Question 5.7]{jh-hopf}. 
	
\medbreak 	
Some properties of lower and upper composition factors and their	relation with the composition factors were studied in \cite{jh-hopf}. Unlike for the case of the length, the	lower and upper lengths are not additive with respect to exact sequences and	they are not invariant under duality in general. 
	
\medbreak 	
Neither the composition factors nor the upper or
lower composition factors of a finite dimensional Hopf algebra $H$ are
categorical invariants of $H$. In other words, they are not invariant under
twisting deformations of $H$. In fact, there
exists a (semisimple) Hopf	algebra $H$ such that $H$ is simple as a Hopf algebra and $H$ is twist equivalent to	the group algebra of a solvable group $G$ (see Theorem \ref{twist-solv} below). In particular, the categories of	finite dimensional representations of $H$ and $G$ are equivalent fusion	categories. 

\subsection{Simple Hopf algebras}\label{subs-simple}
Recall that a finite dimensional Hopf algebra is simple if it contains no proper normal Hopf subalgebras. 
For instance, if $G$ is a finite simple group, then the group
algebra $kG$ and its dual $k^G$ are simple Hopf algebras.
Furthermore, in this case, any twisting deformation of $kG$ is
simple \cite{nik}. However, there are examples of solvable groups
that admit simple twisting deformations.

\medbreak Finite dimensional Hopf algebras with tensor equivalent categories
of representations are obtained from one another by a twisting
deformation. Properties of $H$ invariant under twisting are of
special interest because they depend only on the tensor category
$H\modd$.

\medbreak 
In the paper \cite{GN} we presented examples showing that the
notions of simplicity and (semi)solvability of a Hopf
algebra are \emph{not} twist invariants; that is, they are not
categorical notions.

\medbreak 
Let $p$, $r$ and $q$ be prime numbers such that $q$ divides $p-1$
and $r-1$. There is a family of supersolvable
groups $G$ of order $prq^2$ that can be deformed through a twist into
nontrivial simple Hopf algebras. 

\medbreak 
Let us recall this construction. Let $G_1 = \mathbb Z_p \rtimes \mathbb Z_q$ and $G_2 = \mathbb Z_r \rtimes \mathbb Z_q$ be the only nonabelian groups of
orders $pq$ and $rq$, respectively. Let $G = G_1 \times G_2$ and
let $S \cong \mathbb Z_q \times \mathbb Z_q$ be a
subgroup of $G$ order $q^2$. In particular, $G$ is supersolvable and
$Z(G) = 1$.

\medbreak Let $1\neq \omega \in H^2(\widehat S, k^*)$,  $J\in kG
\otimes kG$  the twist lifted from $S$ corresponding to  $\omega$.
Let also $H = (kG)^J$. Note that the cocycle $\omega$ is
nondegenerate. Also, $H$ is a noncommutative noncocommutative Hopf algebra of dimension $prq^2$.

\begin{theorem}\label{twist-solv}\emph{(\cite[Theorem 4.5]{GN}.)} The Hopf algebra $H$ is simple. \end{theorem}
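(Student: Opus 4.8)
The plan is to use that $H=(kG)^J$ is a Drinfeld twist of $kG$: it has the \emph{same} underlying algebra as $kG$, while its comultiplication is $\Delta^J(x)=J\,\Delta(x)\,J^{-1}$. In particular $H$ is semisimple and cosemisimple, and a gauge transformation does not alter the tensor category of representations, so $\Rep H\cong\Rep G$ as fusion categories; what changes is only the fiber functor. Since the notion of simplicity of a finite dimensional Hopf algebra is self-dual, it suffices to prove that $H$ has no proper normal Hopf subalgebra, and I shall do so by showing that every exact sequence $k\toto A\toto H\toto B\toto k$ is trivial.

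First I would classify the normal Hopf subalgebras of $H$. Using the identification $\Rep H\cong\Rep G$, every quotient Hopf algebra $\pi\colon H\to B$ yields a tensor subcategory $\Rep B\subseteq\Rep G$; as these are exactly the subcategories $\Rep(G/N)$ with $N\trianglelefteq G$, one gets $B\cong(k(G/N))^{\bar J}$, where $\bar J$ is the image of $J$ under $kG\to k(G/N)$. Dually, the candidate normal Hopf subalgebras are the twists $(kN)^{J'}$ of subgroup algebras $kN$, $N\trianglelefteq G$. The essential observation is that a subgroup algebra $kN$ is stable under $\Delta^J$ precisely when the twist $J$ is, up to gauge, supported inside $kN\otimes kN$; since $J\in kS\otimes kS$, this forces a compatibility between $S$, $N$ and the cocycle $\omega$.

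The crux is then the interplay between the nondegeneracy of $\omega$ and the subgroup structure of $G=G_1\times G_2$. Nondegeneracy means that $\omega$ defines a nonzero symplectic form on $S\cong\mathbb{Z}_q\times\mathbb{Z}_q$; such a form on a two-dimensional symplectic space is \emph{indecomposable}, in the sense that it admits no orthogonal splitting into a form on a proper nonzero subgroup and a form on the complementary quotient. Consequently the twist $J$ cannot be split along a normal subgroup $N$ with $1\neq S\cap N\neq S$, and an exact sequence can occur only if the entire twist lies on one side: either $S\subseteq N$, so that $A\cong(kN)^{J'}$ and $B\cong k(G/N)$, or $S\cap N=1$, so that $A\cong kN$ and $B\cong(k(G/N))^{\bar J}$ with $\bar J$ again nondegenerate. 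In the second case $kN$ would have to be a $\Delta^J$-stable, hence Hopf, subalgebra of $H$, which fails when $S\not\subseteq N$; so only the first case survives. But the normal subgroups of $G_1=\mathbb{Z}_p\rtimes\mathbb{Z}_q$ are $1,\mathbb{Z}_p,G_1$ and those of $G_2=\mathbb{Z}_r\rtimes\mathbb{Z}_q$ are $1,\mathbb{Z}_r,G_2$; since each $\mathbb{Z}_q$-factor of $S$ is a non-normal complement, its normal closure in $G_i$ is all of $G_i$, and hence the normal closure of $S$ in $G$ is $G$ itself. Therefore the only normal subgroup $N$ containing $S$ is $N=G$, giving $A=H$; every exact sequence is thus trivial, and $H$ is simple.

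The main obstacle is the classification step: proving that a normal Hopf subalgebra of the twisted Hopf algebra $H$ must, up to gauge, arise from a normal subgroup of $G$ carrying the twist, and pinning down exactly how the nondegeneracy of $\omega$ forbids the twist from splitting. This requires a careful analysis of the grouplike elements of $H$, of its coradical, and of the restriction and inflation behaviour of $\omega$ with respect to $S\cap N$ and $SN/N$; one must in particular rule out normal Hopf subalgebras whose coradical is not a subgroup algebra. The group-theoretic bookkeeping in the final step is then routine, and the hypothesis $Z(G)=1$ guarantees there are no further central normal subgroups to consider. It is precisely here that $H$ departs from the non-simple group algebra $kG$ to which it is gauge equivalent, illustrating that simplicity is not a categorical invariant.
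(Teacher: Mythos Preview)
Your approach differs substantially from that of \cite{GN} as summarized in the paper. The original proof does not attempt to classify the normal Hopf subalgebras of $H=(kG)^J$ directly. Instead it compares two descriptions of the (co)representation theory of $H$: on one hand, Etingof and Gelaki \cite{eg-reptriang} determine the simple comodules of a cotriangular semisimple Hopf algebra such as $(kG)^J$ explicitly in terms of $G$, $S$ and the nondegenerate cocycle $\omega$; on the other hand, if $H$ sat in a nontrivial exact sequence then $H$ (equivalently $H^*$) would be a bicrossed product, and the results of Montgomery and Witherspoon \cite{MW} constrain the simple (co)modules of such a Hopf algebra in terms of those of the two pieces. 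One then checks that these two descriptions are incompatible unless the exact sequence is trivial. No structural identification of the putative normal Hopf subalgebra $A$ is ever needed.

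Your route, by contrast, hinges on identifying every normal Hopf subalgebra of $(kG)^J$ with a twist of some $kN$, $N\trianglelefteq G$, and this is where the argument has a real gap---as you yourself flag. The first step is fine: Hopf algebra \emph{quotients} of $H$ correspond, via $\Rep H\cong\Rep G$, to normal subgroups $N\trianglelefteq G$, the quotient being $(k(G/N))^{\bar J}$. But this yields only $\dim A=|N|$ for the coinvariants $A={}^{\co\pi}H$; it does not show that $A$ coincides with $kN$ inside $H=kG$, nor does it tell you for which $N$ the quotient $\pi$ is \emph{normal} (i.e.\ ${}^{\co\pi}H=H^{\co\pi}$), which is exactly what separates an arbitrary Hopf quotient from one fitting into an exact sequence. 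Your assertion that ``$kN$ is stable under $\Delta^J$ precisely when $J$ is, up to gauge, supported in $kN\otimes kN$'' is unproved and, more importantly, is applied to the wrong object: you would need such a statement for $A$, which has not been shown to equal $kN$. The heuristic about nondegeneracy of $\omega$ preventing the twist from ``splitting'' along $S\cap N$ does not repair this. Without a rigorous substitute for this classification step, the group-theoretic computation of the normal closure of $S$ in $G$ is never legitimately reached, and the argument does not close.
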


The proof relied on the comparison of the (co)representation theory of
the given twistings \cite{eg-reptriang} with that of an extension
\cite{MW}. 

\medbreak 
Certain twists of the symmetric group
$\mathbb S_{n}$ on $n$ letters, $n \geq 5$, were also shown to be simple as Hopf algebras in \cite{GN}.

\medbreak 
As a consequence of Theorem \ref{twist-solv} the analogue of Burnside's $p^aq^b$-Theorem for finite groups does not hold for semisimple Hopf algebras. 

\medbreak 
Theorem \ref{twist-solv} provides the smallest example of a noncommutative noncocommutative semisimple Hopf algebra which is simple: this appears in dimension $36$ as a twisting of $D_3\times D_3$. This turns out to be the only simple example in dimension $< 60$:

\begin{theorem}\emph{(\cite[Theorem 1]{ss-lowdim}.)}
Every semisimple Hopf algebra of dimension less than 60 is semisolvable up to a cocycle twist.
\end{theorem}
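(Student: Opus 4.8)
The plan is to prove the statement by a finite case analysis over the dimensions $2 \le d \le 59$, organizing them according to their prime factorization and invoking, for each type, the appropriate structural or classification result. In every case one checks that $H$ admits a lower or upper subnormal series whose factors are commutative or cocommutative, possibly after applying a cocycle twist. Recall that to say $H$ is \emph{semisolvable up to a cocycle twist} it suffices to exhibit a single twist $H^J$ possessing such a series, and that a Hopf algebra fitting into an abelian exact sequence, or more generally having a subnormal series with commutative or cocommutative factors, is semisolvable by definition.

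First I would dispose of the elementary dimensions, for which no twist is needed. If $d$ is prime, Zhu's theorem gives $H \cong kG$ with $G$ cyclic, so $H$ is cocommutative and trivially semisolvable. If $d = p^2$ or $d = pq$ with $p \neq q$, the classification of semisimple Hopf algebras in these dimensions (Masuoka for $p^2$, Etingof--Gelaki for $pq$) shows that $H$ is commutative or cocommutative, hence again immediately semisolvable. If $d = p^n$ is a prime power --- the relevant cases below $60$ being $8, 16, 27$ and $32$ --- then $H$ is semisolvable by the result recalled in Subsection \ref{subs-semis}.

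The remaining dimensions are the genuinely composite ones: $d = p^2q$ (namely $12,18,20,28,44,45,50,52$), $d = pqr$ (namely $30,42$), the dimensions $24, 40, 48, 54, 56$ of the form $2^a p$ or $2\cdot 3^3$, and finally $d = 36$. For all of these except $36$ the strategy is to produce a proper normal Hopf subalgebra $k \subsetneq A \subsetneq H$, so that $H$ fits into an exact sequence whose terms have strictly smaller dimension; the factors are then commutative or cocommutative by induction on $\dim H$, yielding semisolvability outright and, equivalently, ruling out simple examples in these dimensions. Producing such a normal Hopf subalgebra is the technical heart of these cases; it is extracted from the dimension classifications together with the constraints on the dimensions of the irreducible (co)representations, Nichols--Zoeller freeness, and the multiplicativity $\dim H = \dim H' \dim H''$ in an extension.

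The main obstacle is dimension $36 = 2^2 3^2$, the unique dimension below $60$ that carries a genuinely simple noncommutative noncocommutative semisimple Hopf algebra: the twist of $k[D_3 \times D_3]$ of Theorem \ref{twist-solv}. Here semisolvability in the strict sense necessarily fails, so the cocycle twist becomes essential, and establishing that $36$ is the sole exceptional dimension is precisely what makes the argument delicate. The key point, supplied by the classification in this dimension, is that every semisimple Hopf algebra of dimension $36$ is gauge equivalent to a semisolvable one: concretely, the simple examples are twists $(kG)^J$ of group algebras of solvable groups $G$ of order $36$, and untwisting by $J^{-1}$ returns the cocommutative --- hence semisolvable --- Hopf algebra $kG$. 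This is exactly the assertion that $H$ is semisolvable up to a cocycle twist, so handling this last dimension completes the proof.
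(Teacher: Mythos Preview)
The paper does not contain a proof of this statement: it is quoted verbatim as \cite[Theorem 1]{ss-lowdim} and no argument is supplied. The result is the main theorem of a 123-page memoir, and the present paper merely records it as context for the discussion of simple Hopf algebras in dimension $60$. There is therefore nothing in the paper to compare your proposal against.

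As for the proposal itself, your outline does match the broad architecture of the memoir --- a dimension-by-dimension analysis leaning on Zhu's theorem, the prime-power semisolvability of Masuoka and Montgomery--Witherspoon, and the known triviality results in dimensions $p^2$ and $pq$, with dimension $36$ singled out as the unique case requiring a twist. But you should be aware that what you label ``the technical heart'' (producing a proper normal Hopf subalgebra in each of the remaining dimensions $12, 18, 20, 24, 28, 30, 40, 42, 44, 45, 48, 50, 52, 54, 56$) and ``the classification in this dimension'' (for $36$) are not small steps one can wave through: together they occupy the bulk of the cited memoir and rely on case-specific arguments about coalgebra types, Nichols--Zoeller constraints, and Radford--Schneider biproduct decompositions that are nowhere reproduced here. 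Your sketch is a correct table of contents, not a proof.
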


\medbreak The previously mentioned results on simplicity of twisting deformations provide us with three examples of non-commutative non-cocommutative semisimple Hopf algebras of dimension 60 which are simple as Hopf algebras. The
first two are the Hopf algebras $\A_0$ and $\A_1 \simeq \A_0^*$
constructed by D. Nikshych \cite{nik}. We have $\A_0 = (k\mathbb
A_5)^J$, where $J \in k\mathbb A_5 \otimes k\mathbb A_5$ is an
invertible twist lifted from a nondegenerate $2$-cocycle in a
subgroup of $\mathbb A_5$ isomorphic to $\Zz_2 \times \Zz_2$.

The third example is the self-dual Hopf algebra $\B$ constructed
in \cite{GN}. In this case $\B = (kD_3\otimes kD_5)^J$, where $J$
is an invertible twist also lifted from a nondegenerate
$2$-cocycle in a subgroup of $D_3\times D_5$ isomorphic to $\Zz_2
\times \Zz_2$.

\medbreak 
As coalgebras, these examples are isomorphic to direct sums of
full matric coalgebras, as follows: \begin{align}\label{coalg-a1}
\A_1 & \simeq k \oplus M_3(k)^{(2)}\oplus M_4(k)\oplus M_5(k),
\\ \label{coalg-a0} \A_0 & \simeq k^{(12)}\oplus M_4(k)^{(3)}, \\ \label{coalg-b}  \B & \simeq
k^{(4)}\oplus M_2(k)^{(6)}\oplus M_4(k)^{(2)}. \end{align} As for
the group-like elements, we have $G(\A_0) \simeq \mathbb A_4$ and
$G(\B) \simeq \Zz_2 \times \Zz_2$.

\medbreak It was shown in
\cite[Theorem 9.12]{ENO2} that $\Rep \A_0 \simeq \Rep \mathbb A_5$
is the only fusion category of dimension $60$ which contains no proper fusion
subcategories. 

\medbreak The following theorem was shown in \cite{sixty}, answering Question 2.4 in \cite{andrus}.

\begin{theorem}\emph{(\cite[Theorem 1.4]{sixty}.)} Let $H$ be a nontrivial semisimple Hopf algebra of dimension	$60$. Suppose $H$ is simple. Then $H$ is isomorphic to $\A_0$ or	to $\A_1$ or to $\B$. \end{theorem}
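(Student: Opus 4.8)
The plan is to analyze $H$ through both of its fusion categories $\Rep H$ and $\Rep H^*$ of Frobenius--Perron dimension $60 = 2^2\cdot 3\cdot 5$, exploiting the self-duality of simplicity recorded above. By the Larson--Radford theorem $H$ is cosemisimple with $S^2=\id$, so $H^*$ is again a nontrivial semisimple Hopf algebra of dimension $60$ which is also simple. I would use the standard dictionary: quotient Hopf algebras of $H$ correspond to fusion subcategories of $\Rep H$, while Hopf subalgebras $K\subseteq H$ correspond, via the surjection $H^*\to K^*$, to fusion subcategories of $\Rep H^*$; simplicity says that no such subalgebra or quotient is normal. In parallel I would record the numerical constraints: writing $1=d_1,\dots,d_k$ for the dimensions of the irreducible $H$-comodules, one has $\sum_i d_i^2=60$, the number of $d_i$ equal to $1$ is $|G(H)|$, and by Nichols--Zoeller the dimension of every Hopf subalgebra (in particular $|G(H)|$ and $|G(H^*)|$) divides $60$, while the Kac--Zhu class equation cuts the admissible coalgebra types down to a short list.

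The argument then splits according to whether $H$ admits a proper nontrivial quotient and/or a proper nontrivial Hopf subalgebra. Suppose first that $\Rep H$ has no proper fusion subcategory, equivalently that $H$ has no proper nontrivial quotient Hopf algebra. Then by \cite[Theorem 9.12]{ENO2} we have $\Rep H\simeq\Rep\mathbb{A}_5$; transporting the forgetful fiber functor of $\Rep H$ to $\Rep\mathbb{A}_5$ and reconstructing by Tannaka duality, $H$ is a Drinfeld twist $(k\mathbb{A}_5)^J$, and the classification of fiber functors on $\Rep\mathbb{A}_5$ — pairs consisting of a subgroup of central type and a nondegenerate $2$-cocycle, which for $\mathbb{A}_5$ reduce to the trivial subgroup and a Klein four subgroup $\Zz_2\times\Zz_2$ — shows that the unique nontrivial possibility is $\A_0$, with coalgebra type \eqref{coalg-a0}. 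Applying the same reasoning to $H^*$ covers the case in which $H$ has no proper nontrivial Hopf subalgebra, and yields $H^*\cong\A_0$, that is $H\cong\A_1\simeq\A_0^*$, with coalgebra type \eqref{coalg-a1}.

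It remains to treat the case in which $H$ has both a proper nontrivial Hopf subalgebra $K$ and a proper nontrivial quotient, neither of which is normal. Here I would exploit that every semisimple Hopf algebra of dimension a proper divisor of $60$ is semisolvable up to a cocycle twist (by the theorem recalled above), so that the possible $K$ and the possible quotients are of a tightly controlled type; feeding this into the fusion-subcategory lattices of $\Rep H$ and $\Rep H^*$ together with the extension theory encoded by $\Opext$ should force $H$ to be a cocycle twist $(kG)^J$ of the group algebra of a \emph{solvable} group $G$ of order $60$, the nonsolvable group $\mathbb{A}_5$ having already produced the two previous cases. Finally, running through the solvable groups of order $60$ and the nondegenerate $2$-cocycles supported on their abelian subgroups, one checks that a twist can destroy every normal Hopf subalgebra only for $G=D_3\times D_5$ twisted along a $\Zz_2\times\Zz_2$ subgroup, producing the self-dual example $\B$ with coalgebra type \eqref{coalg-b} and $G(\B)\cong\Zz_2\times\Zz_2$, exactly the construction recalled from \cite{GN}.

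The hard part will be this third case. The difficulty is precisely that simplicity and normality are Hopf-algebraic and \emph{not} twist-invariant notions, whereas the decisive structural inputs (the low-dimensional classification, the rigidity of $\Rep\mathbb{A}_5$, Morita equivalence) are categorical; one must therefore track, for each admissible coalgebra type, grouplike group, and cohomology class, whether the resulting Hopf algebra retains a proper normal Hopf subalgebra, and show that a single configuration — the one giving $\B$ — survives while genuinely yielding a simple Hopf algebra. Establishing that the reduction to a twist of a solvable group algebra is forced in Case three, and that no solvable group other than $D_3\times D_5$, and no other cocycle on it, produces a simple twist, is where the bulk of the case analysis and the cohomological computations will be concentrated.
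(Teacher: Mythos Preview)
This survey paper does \emph{not} contain a proof of the stated theorem; it is cited without proof from \cite{sixty}. So there is no in-paper proof to compare against, and any assessment must be of your outline on its own terms (or against the original article, which is outside the present text).

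That said, your first two cases are well-grounded and indeed reflect the structure one expects: the paper itself records that \cite[Theorem 9.12]{ENO2} characterizes $\Rep\Aa_5$ as the unique fusion category of dimension $60$ with no proper fusion subcategories, and the passage from a tensor equivalence $\Rep H\simeq\Rep\Aa_5$ to $H\cong (k\Aa_5)^J$ via fiber functors on $\Rep\Aa_5$ is standard. The symmetric argument for $H^*$ giving $\A_1$ is likewise sound.

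The third case, however, contains a genuine gap. You assert that the existence of a proper nontrivial Hopf subalgebra and a proper nontrivial quotient, together with the low-dimensional semisolvability result, ``should force $H$ to be a cocycle twist $(kG)^J$ of the group algebra of a solvable group $G$ of order $60$.'' This inference is not justified: knowing that each proper sub- and quotient Hopf algebra is (up to twist) semisolvable does not by itself imply that $H$ is a twist of a group algebra, nor does it provide a mechanism to produce a tensor equivalence $\Rep H\simeq\Rep G$ for some group $G$. The actual argument in \cite{sixty} proceeds instead through a detailed case analysis of the possible coalgebra types of $H$ and $H^*$, the orders of $G(H)$ and $G(H^*)$, the structure of the fusion subcategories they generate, and character-theoretic constraints (Nichols--Richmond, Kaplansky-type divisibility, the class equation), eliminating configurations one by one rather than invoking a single reduction-to-twist principle. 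Your outline correctly identifies this case as the hard one, but the proposed bridge from ``has proper subobjects and quotients'' to ``is a twist of a solvable group algebra'' is the step that does not exist as stated and would need to be replaced by that finer analysis.
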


\section{Exact sequences of tensor categories}\label{exact-rk1}

Let $\C$ and $\C''$ be tensor categories over $k$. 
A tensor functor $F: \C \to \C''$ is called \emph{dominant} (or \emph{surjective}) if every object of $\C''$ is a subobject of $F(X)$ for some object $X$ of $\C$. If $\C''$ is a finite tensor category, then $F$ is dominant if and only if every object of $\C''$ is a subquotient of $F(X)$ for some object $X$ of $\C$; see \cite[Lemma 2.3]{eg-emc}.

\medbreak A tensor functor $F: \C \to \C''$ is called \emph{normal}
if for every object $X$ of $\C$, there
exists a subobject $X_0 \subset X$ such that $F(X_0)$ is the
largest trivial subobject of $F(X)$.

\medbreak 
For a tensor functor $F: \C \to \C''$, let $\KER_F$ denote the tensor
subcategory
$F^{-1}(\langle \uno \rangle) \subseteq \C$  of objects $X$ of
$\C$ such that $F(X)$ is a trivial object of $\C''$. If the functor $F$ has a right adjoint $R$, then $F$ is normal if and only if
$R(\uno)$ belongs to $\KER_F$ \cite[Proposition 3.5]{tensor-exact}.

\begin{definition}\label{exact-rk1}Let $\C', \C, \C''$ be tensor categories over $k$. An \emph{exact
	sequence of tensor categories} is a sequence of
tensor functors
\begin{equation}\label{exacta-fusion}\C' \overset{f}\toto \C \overset{F}\toto \C'',
\end{equation}
such that the tensor functor $F$ is dominant and normal and the tensor functor
$f$ is a full embedding whose  essential image  is $\KER_F$ \cite{tensor-exact}.

\medbreak 
If $\C$ fits into an exact sequence \eqref{exacta-fusion}, we say that $\C$ is an \emph{extension of $\C''$ by $\C'$}.
\end{definition}

Every exact sequence of tensor categories 
\eqref{exacta-fusion}
defines a fiber functor on the kernel $\C'$: $$\omega=\Hom(\1,Ff) : \C' \to \vect.$$

\medbreak 
The \emph{induced Hopf algebra} $H$ of the exact sequence \eqref{exacta-fusion} is defined as
\begin{equation}
H = \coend (\omega);
\end{equation}
see \cite[Subsection 3.3]{tensor-exact}. Thus there is an equivalence of tensor categories $\C' \cong \comod H$ such that the following diagram of tensor functors is commutative:
$$\xymatrix{\C' \ar[r]^{\cong \quad } \ar[rd]_{\omega} & \comod H \ar[d]^{U}\\ & \vect.}$$

The induced Hopf algebra $H$ of \eqref{exacta-fusion} is finite-dimensional if and only if the tensor functor $F$ admits adjoints \cite[Proposition 2.15]{tensor-exact}. Hence if $\C'$ and $\C''$ are finite tensor categories, then so is $\C$.

\medbreak The fiber functor $\omega$ corresponds to a (rank one) $\C'$-module category structure on $\vect$, that we shall denote by $\M$ (see \cite[Proposition 4.1]{ostrik}). Assume that $H$ is finite dimensional. As a consequence of \cite[Theorem 4.2]{ostrik} we obtain the following relation between $\M$ and the induced Hopf algebra:
$$(\C')^*_\M \cong H\modd.$$

\subsection{Examples from exact sequences of Hopf algebras}\label{subs-exactahopf}
All Hopf algebras considered in this subsection are assumed to have a bijective antipode.

\medbreak 
Let $f : H_1 \to H_2$ be a Hopf algebra map. Then $f$ induces by restriction tensor functors 
$$f_* : \CoRep{H_1} \to \CoRep{H_2}, \qquad f^* : H_2\modd \to H_1\modd.$$

The  functor  $f_* : \CoRep{H_1} \to \CoRep{H_2}$ is dominant if and only if the functor $\underline{\quad} \square_{H_2}H_1 :  \operatorname{Comod}\!\mbox{-}{H_2} \to \operatorname{Comod}\!\mbox{-}{H_1}$ is faithful. On the other hand, if $f$ is injective and $H_2$ is finite-dimensional, then $f^* : H_2\modd \to H_1\modd$ is dominant. See \cite[Lemma 2.11]{tensor-exact}. 

\medbreak
Recall that the map $f$ has a kernel and a cokernel in the category of Hopf algebras, called the \emph{categorical kernel} and the \emph{categorical cokernel} of $f$, and defined, respectively, in the form
\begin{align*}
\Hker(f)&=\{h \in H_1 \mid h_{(1)} \otimes f(h_{(2)}) \otimes h_{(3)} = h_{(1)} \otimes 1 \otimes h_{(2)} \}\\
\Hcoker(f)&=H_2/H_2 f(H_1^+)H_2.
\end{align*}

The following proposition describes the kernels of the tensor functors induced by a Hopf algebra map.

\begin{proposition}\emph{(\cite[Lemma 2.10]{tensor-exact}.)}
Let $f : H \to H'$ be a morphism of Hopf algebras over a field,  and let
$$f_*:  \CoRep{H}\to \CoRep{H'}, \qquad  f^* : H'\modd \to H\modd$$ be the tensor functors induced by $f$.
Then the following hold:
\begin{enumerate}\item
$\KER_{f_*}= \CoRep{\Hker(f)}$. Moreover the tensor functor $f_*$ is normal if and only if
$H^{co H'}=\,^{co H'}\!\!H$, and in this case $\Hker(f)=H^{co H'}$.
\item $\KER_{f^*}= \Hcoker(f)\modd$. Moreover the tensor functor $f^*$ is normal if $f(H)$ is a normal Hopf subalgebra of $H'$, and in this case  $\Hcoker(f)={H'/H'f(H^+)}$.
\end{enumerate}
\end{proposition}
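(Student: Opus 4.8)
The plan is to prove the four assertions of the Proposition by unwinding the definitions of the induced tensor functors, the categorical kernel/cokernel, and the condition $\KER_F = F^{-1}(\langle \uno \rangle)$, treating parts (1) and (2) as dual to each other.

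\medbreak
\textbf{Part (1): the comodule case.}
First I would compute $\KER_{f_*}$ directly. An object of $\CoRep{H}$ lies in $\KER_{f_*}$ precisely when its image under $f_*$ is a trivial $H'$-comodule, i.e. when the $H'$-coaction obtained by pushing the $H$-coaction along $f$ is trivial. I expect this condition to say exactly that the comodule structure map factors through the subcoalgebra $\Hker(f) \subseteq H$; indeed the defining equation
$h_{(1)} \otimes f(h_{(2)}) \otimes h_{(3)} = h_{(1)} \otimes 1 \otimes h_{(2)}$
is precisely the coalgebra-level statement that the $f$-induced coaction on the ``regular'' comodule is trivial, and $\Hker(f)$ is the maximal subcoalgebra with this property. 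Hence I would identify $\KER_{f_*}$ with the category of comodules whose coaction lands in $\Hker(f)$, giving $\KER_{f_*} = \CoRep{\Hker(f)}$. For the normality statement, I would use the criterion from the excerpt: $f_*$ is normal iff for every $X$ there is a subobject $X_0$ whose image is the largest trivial subobject of $f_*(X)$. Applying this to the regular comodule $H$ and tracking the left versus right coinvariants ${}^{co\, H'}\!H$ and $H^{co\, H'}$, normality should force these two subspaces to coincide, and in that case the largest trivial subobject is governed by $H^{co H'} = \Hker(f)$.

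\medbreak
\textbf{Part (2): the module case, by duality.}
For the module functor $f^*: H'\modd \to H\modd$, I would argue that an $H'$-module $M$ becomes $H$-trivial under restriction along $f$ exactly when $f(H^+)$ acts by zero on $M$, i.e. when $M$ is a module over the quotient $H_2/H_2 f(H_1^+)H_2 = \Hcoker(f)$. This yields $\KER_{f^*} = \Hcoker(f)\modd$. The normality clause is where the asymmetry appears: if $f(H)$ is a \emph{normal} Hopf subalgebra of $H'$, then $H'f(H^+) = f(H^+)H'$ is a Hopf ideal (as recalled in Subsection \ref{nhs}), so $\Hcoker(f) = H'/H'f(H^+)$ is an honest quotient Hopf algebra and the right adjoint of $f^*$ sends $\uno$ into $\KER_{f^*}$; invoking the adjoint criterion $R(\uno) \in \KER_F$ for normality then gives the result.

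\medbreak
\textbf{Main obstacle.}
The routine bookkeeping (identifying subcomodules/quotient modules with (co)modules over $\Hker(f)$ resp. $\Hcoker(f)$) is straightforward once set up. \emph{The hard part will be} the normality equivalence in part (1): showing that normality of $f_*$ is \emph{equivalent} to the equality $H^{co H'} = {}^{co H'}\!H$ of left and right coinvariants, rather than merely implied by a normality hypothesis on the Hopf subalgebra as in part (2). I expect this to require a careful analysis of the largest trivial subobject of $f_*(X)$ for a general comodule $X$ and a reduction to the regular comodule, where the two coinvariant subalgebras enter symmetrically; the key point is that the existence of the subobject $X_0$ for \emph{all} $X$ is a genuinely stronger condition that pins down the coincidence of the two coinvariant subspaces.
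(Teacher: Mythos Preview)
The paper does not supply a proof of this proposition: it is stated with the attribution \emph{(\cite[Lemma 2.10]{tensor-exact})} and used as input, so there is no argument in the present paper to compare your proposal against. Your outline follows the natural strategy one would expect from the original source---identify $\KER_{f_*}$ (resp.\ $\KER_{f^*}$) by writing out when the pushed-forward coaction (resp.\ restricted action) is trivial, then analyse normality via the largest trivial subobject, reducing to the regular (co)module---and this is consistent with how such statements are typically proved.

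One small comment on the ``hard part'' you flag in (1): the equivalence between normality of $f_*$ and $H^{\mathrm{co}\,H'} = {}^{\mathrm{co}\,H'}H$ is less mysterious than you suggest once you note that the largest trivial subobject of $f_*(X)$, as a subspace, is always the space of $H'$-coinvariants of $X$; normality then amounts to this subspace being an $H$-subcomodule for every $X$, and checking this on the regular comodule $H$ gives exactly the coincidence of left and right coinvariants (the general case follows since every finite-dimensional comodule embeds in a sum of copies of $H$). So your reduction to the regular comodule is the right move, and no deeper obstacle should arise there.
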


We shall say that an exact sequence of Hopf algebras 
\begin{equation}\label{st-exact}
k \toto H' \overset{i}\toto  H \overset{\pi}\toto H'' \toto k,
\end{equation}
is \emph{strictly exact} if $H$ is faithfully coflat over $H''$. 

\begin{remark}\label{rmk-st-exact} Notice that, since the antipode of $H$ is bijective, $H$ is right faithfully coflat over $H''$ if and only if it is left faithfully coflat over $H''$. Moreover, if this is the case, then $\pi$ is normal if and only if $^{co \pi}H = H^{co \pi}$. See \cite{takeuchi}.
%\cite[Remark 2.2 (i)]{jh-hopf}.
	
\medbreak Observe in addition that the antipode of  $H$ induces an anti-isomorphism of algebras $\mathcal S: {}^{co \pi}H \to H^{co \pi}$ with inverse $\mathcal S^{-1}: H^{co \pi} \to {}^{co \pi}H$. In particular, if $i(H') = {}^{co \pi}H$, then ${}^{co \pi}H$ (being a Hopf subalgebra) is stable under the antipode and therefore $^{co \pi}H = H^{co \pi}$.	
\end{remark}

\medbreak Assume that the sequence \eqref{st-exact} is strictly exact. Then, as can be seen from the facts recalled in Remark \ref{rmk-st-exact}, it is a \emph{strictly exact sequence of Hopf algebras} in the sense of \cite{schneider}, that is, the following condiditions hold:

\begin{enumerate}[(a)]
	\item $\pi$ is a normal Hopf algebra map,
\item $H$ is right faithfully coflat over $H''$,
\item $i(H') = \Hker \pi$.
\end{enumerate}

These conditions are furthermore equivalent to the following:

\begin{enumerate}[(a')]
\item $H'$ is a normal Hopf subalgebra of $H$,
\item  $H$ is right faithfully flat over $H'$,
\item $H'' = \Hcoker i$.
\end{enumerate}

In this way we obtain:

\begin{theorem}\emph{(\cite[Proposition 3.9]{tensor-exact}.)}
Every strictly exact sequence of Hopf algebras \eqref{st-exact} gives rise to an exact sequence of tensor categories
$$\CoRep{H'} \overset{i_*}\toto \CoRep{H} \overset{\pi_*}\toto \CoRep{H''}.$$ If in addition $H$ is finite-dimensional, it also gives rise to an exact sequence of tensor categories:
$$H''\modd \overset{\pi^*}\toto H\modd \overset{i^*}\toto H'\modd.$$ 
\end{theorem}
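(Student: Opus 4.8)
The plan is to verify that a strictly exact sequence of Hopf algebras satisfies the hypotheses required to apply the kernel description in the preceding Proposition (\cite[Lemma 2.10]{tensor-exact}), and then check the three defining conditions of an exact sequence of tensor categories from Definition \ref{exact-rk1}: dominance, normality, and the identification of the kernel with the essential image of the first functor. First I would unpack the equivalence of conditions (a)--(c) and (a')--(c') already recorded in the excerpt, since these tell us simultaneously that $\pi$ is normal, that $i(H') = \Hker\pi = {}^{co\pi}H = H^{co\pi}$, and that $H'$ is a normal Hopf subalgebra with $H'' = \Hcoker i$. The faithful (co)flatness in (b)/(b') is precisely what makes the restriction functors behave well, and Remark \ref{rmk-st-exact} guarantees the left/right symmetry of coinvariants that the normality statement in the cited Proposition requires.

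For the comodule version $\CoRep{H'} \overset{i_*}\toto \CoRep{H} \overset{\pi_*}\toto \CoRep{H''}$, I would argue as follows. Dominance of $\pi_*$ follows from the criterion in \cite[Lemma 2.11]{tensor-exact}: by (b) the Hopf algebra $H$ is faithfully coflat over $H''$, and the cotensor functor $\underline{\quad}\,\square_{H''}H$ is faithful precisely because of this faithful coflatness. Normality and the kernel computation come directly from part (1) of the cited Proposition: since $^{co\pi}H = H^{co\pi}$ holds by (a) together with Remark \ref{rmk-st-exact}, the functor $\pi_*$ is normal and $\Hker(\pi) = H^{co\pi}$. Then by (c) we have $i(H') = \Hker\pi$, so the essential image of the full embedding $i_*$ is exactly $\KER_{\pi_*} = \CoRep{\Hker\pi}$, which is the last defining condition. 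The faithful flatness in (a)--(c) secures that $i_*$ is genuinely a full embedding of the stated kernel rather than merely a functor into it.

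For the module version $H''\modd \overset{\pi^*}\toto H\modd \overset{i^*}\toto H'\modd$ under the additional hypothesis that $H$ is finite-dimensional, I would invoke part (2) of the cited Proposition applied to the map $i: H' \to H$. Dominance of $i^*$ follows from the second assertion of \cite[Lemma 2.11]{tensor-exact}, which gives dominance of $f^*$ when $f$ is injective and the target is finite-dimensional; here $i$ is injective by (a) and $H$ is finite-dimensional by hypothesis. Normality of $i^*$ and the kernel identification $\KER_{i^*} = \Hcoker(i)\modd$ follow from part (2) because $i(H') = H'$ is a normal Hopf subalgebra of $H$ by (a'), whence $\Hcoker(i) = H/Hi(H'^+) = H''$ using (c'). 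Thus $\KER_{i^*} = H''\modd$ is the essential image of $\pi^*$, and the three conditions of Definition \ref{exact-rk1} hold.

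The step I expect to require the most care is matching the handedness of faithful (co)flatness with the side on which the relevant coinvariants and cotensor products are taken, since the cited lemmas are stated for particular sides and the comodule categories involve right comodules while the coinvariants $^{co\pi}H$ and $H^{co\pi}$ differ a priori. This is exactly where the bijectivity of the antipode enters: as noted in Remark \ref{rmk-st-exact}, bijectivity of $\mathcal S$ makes right and left faithful coflatness over $H''$ equivalent and, via the anti-isomorphism $\mathcal S: {}^{co\pi}H \to H^{co\pi}$, forces $^{co\pi}H = H^{co\pi}$ once $i(H') = {}^{co\pi}H$. Once this symmetry is in hand the remaining verifications are bookkeeping, so the essential content is organizing the equivalences (a)--(c) $\Leftrightarrow$ (a')--(c') and feeding each of the two maps $i$ and $\pi$ into the appropriate part of \cite[Lemma 2.10]{tensor-exact}.
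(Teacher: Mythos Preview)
Your proposal is correct and follows essentially the same approach as the paper: the paper does not give an explicit proof here but simply writes ``In this way we obtain'' after recording the equivalence of conditions (a)--(c) and (a')--(c') and the Proposition describing $\KER_{f_*}$ and $\KER_{f^*}$, and your argument is precisely the unpacking of that sentence. One small slip: you write ``the faithful flatness in (a)--(c)'' when you mean (a')--(c'); faithful coflatness is what appears in (a)--(c).
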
 

For instance, an exact sequence of groups $1\toto G' \toto G \toto G''\toto  1$ gives rise to an exact sequence of tensor categories
\begin{equation*}\vect_{G'} \toto \vect_{G} \toto \vect_{G''},\end{equation*} and if $G$ is finite, to an exact sequence
\begin{equation*}\rep G'' \toto \rep G \toto \rep G'.\end{equation*}

%\subsection{Induced Hopf algebra}\label{induced-hopf}

\subsection{Extensions and normal Hopf monads}
We refer the reader to \cite{BV}, \cite{BLV} for the notion of Hopf monad on a monoidal category.

\medbreak 
Let $F: \C \to \C''$ be a tensor functor between tensor categories and \emph{assume that $F$ admits a left adjoint $G$}. Then the composition $FG: \C'' \to \C''$  is a $k$-linear right exact Hopf monad on $\C''$, and $\C$ is tensor equivalent to the category $(\C'')^T$ of $T$-modules in $\C''$.

\medbreak The functor $F$ is dominant if and only if $T$ is faithful \cite[Proposition 4.1]{tensor-exact}.

\medbreak 
A Hopf monad $T$ on a tensor category $\C''$ is called \emph{normal} if $T(\1)$ is a trivial object of $\C''$. When $T$ is the Hopf monad corresponding to $F$ as above, the normality of $T$ is equivalent to the normality of $F$ \cite[Proposition 4.6]{tensor-exact}. 

\medbreak 
Normal faithful Hopf monads classify extensions of tensor categories in view of the following theorem:

\begin{theorem}\emph{(\cite[Theorem 4.8]{tensor-exact}.)}\label{thm-ex-nhm} Let	$\C'$, $\C''$ be tensor categories and assume that $\C'$ is finite. Then the following	data are equivalent:
\begin{enumerate}
\item A normal faithful $k$-linear right exact Hopf monad $T$ on $\C''$, with induced Hopf algebra $H$, endowed with a tensor equivalence $K :\C' \cong \CoRep{H}$;
\item An extension $\C' \to \C \to \C''$ of $\C''$ by $\C'$.
\end{enumerate}
\end{theorem}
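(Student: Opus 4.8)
The plan is to establish a bi-directional correspondence between the two sets of data by carefully unwinding how each side produces the other, and then checking these constructions are mutually inverse up to the natural equivalences. The key structural input is the discussion preceding the theorem: given a tensor functor $F : \C \to \C''$ admitting a left adjoint $G$, the composite $T = FG$ is a $k$-linear right exact Hopf monad on $\C''$, and $\C \simeq (\C'')^T$. Moreover we are told that $F$ is dominant if and only if $T$ is faithful, and (when $T$ arises from such an $F$) that $F$ is normal if and only if $T$ is normal. So the heart of the matter is to reconcile the \emph{kernel} data: to show that the normal faithful Hopf monad together with the identification $K : \C' \cong \CoRep{H}$ packages exactly the same information as the embedding $f : \C' \to \C$ realizing $\C'$ as $\KER_F$.

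First I would treat the direction $(2) \Rightarrow (1)$. Starting from an extension $\C' \overset{f}\toto \C \overset{F}\toto \C''$, I would observe that since $\C'$ is finite the induced Hopf algebra $H = \coend(\omega)$ is finite-dimensional, so by \cite[Proposition 2.15]{tensor-exact} the functor $F$ admits adjoints; in particular it has a left adjoint $G$, and I set $T = FG$. By the cited propositions, $T$ is a $k$-linear right exact Hopf monad which is faithful (because $F$ is dominant) and normal (because $F$ is normal). The equivalence $\C' \cong \CoRep{H}$ of the exact-sequence setup furnishes the required tensor equivalence $K$. The content to verify here is that the Hopf algebra $H$ intrinsically attached to the monad $T$ coincides with the induced Hopf algebra $\coend(\omega)$ of the exact sequence, i.e.\ that the fiber functor on the kernel is recovered from the monad; this is where the definitions of $\omega$ and of the kernel $\KER_F = \C'$ must be matched.

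Next I would treat $(1) \Rightarrow (2)$. Given a normal faithful right exact Hopf monad $T$ on $\C''$ with induced Hopf algebra $H$ and equivalence $K : \C' \cong \CoRep{H}$, I would set $\C = (\C'')^T$ and let $F : \C \to \C''$ be the forgetful (free-module) functor, with left adjoint $G$ the free-module functor so that $FG = T$. Faithfulness of $T$ gives dominance of $F$ and normality of $T$ gives normality of $F$. It then remains to produce the full embedding $f : \C' \to \C$ with essential image $\KER_F$: the kernel $\KER_F$ consists of those $T$-modules whose underlying object is trivial, and the normality of $T$ (i.e.\ $T(\uno)$ trivial) together with the standard reconstruction theory identifies this subcategory with $\CoRep H$; composing with $K$ yields $f$. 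I would then check that the fiber functor $\omega = \Hom(\1, Ff)$ has $\coend(\omega) \cong H$, so that the two incarnations of the induced Hopf algebra agree.

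Finally I would verify that these two assignments are mutually inverse: starting from an extension, forming $T = FG$ and then re-forming $(\C'')^T$ returns $\C$ up to the canonical tensor equivalence (this is the monadicity statement $\C \simeq (\C'')^T$ already granted), and the reconstructed kernel embedding agrees with the original $f$ under $K$; conversely, starting from a monad $T$, building the extension and extracting its monad returns $T$ since $FG = T$ by construction. The main obstacle I anticipate is the kernel-matching step: precisely identifying $\KER_F$ inside the module category $(\C'')^T$ with $\CoRep H$ and confirming that the two a priori different definitions of the induced Hopf algebra (the $\coend$ of the fiber functor versus the Hopf algebra reconstructed from the normal part of the monad) are canonically isomorphic. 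This requires invoking the reconstruction/Tannakian machinery and the normality hypothesis $T(\uno)$ trivial in an essential way, rather than the more formal adjunction bookkeeping that handles dominance and normality of $F$.
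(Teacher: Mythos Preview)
The paper does not contain its own proof of this theorem: it is a survey article, and the theorem is stated with a citation to \cite[Theorem 4.8]{tensor-exact} and no accompanying argument. What the paper does provide is the preparatory discussion immediately preceding the statement (the monadicity assertion $\C \simeq (\C'')^T$, the equivalence dominant $\Leftrightarrow$ faithful from \cite[Proposition 4.1]{tensor-exact}, and the equivalence normal $\Leftrightarrow$ normal from \cite[Proposition 4.6]{tensor-exact}), together with the remark after the theorem identifying the induced Hopf algebra of the exact sequence with the restriction of $T$ to the trivial subcategory of $\C''$.

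Your proposal is precisely the argument that this preparatory material is designed to support, and it tracks the structure of the original proof in \cite{tensor-exact}: pass from an extension to a monad via $T=FG$ using finiteness of $\C'$ to guarantee adjoints exist, pass from a monad to an extension via $\C=(\C'')^T$, and match the kernels through the Tannakian reconstruction $\KER_F \cong \CoRep H$. You have correctly identified the only non-formal step as the kernel-matching, and the remark following the theorem in the present paper is exactly the statement that resolves it. So there is nothing to compare: your outline \emph{is} the intended proof, and the paper under review offers no alternative.
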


Under the correspondence established by Theorem \ref{thm-ex-nhm}, the induced Hopf algebra of the exact sequence $\C' \to \C \to \C''$ is identified with the Hopf monad of the restriction of $T$ to the trivial subcategory of $\C''$.

\subsection{Perfect exact sequences and central commutative algebras}\label{pft-exact}

Let $\C$ and $\C''$ be tensor categories. A tensor functor $F: \C \to \C''$ is called \emph{perfect} if it admits an exact right adjoint \cite[Subsection 2.1]{indp-exact}.

\medbreak 
Every tensor functor $F: \C \to \C''$ between finite tensor categories $\C$ and $\C''$ is perfect \cite[Lemma 2.1]{char-crossed-action}.

\medbreak Let $F: \C \to \C''$ be a dominant perfect tensor functor and let $R: \C'' \to \C$ be a right adjoint of $F$ and let $A = R(\1)$.  Then there exists a half-braiding $\sigma$ on $A$ such that $(A, \sigma)$ is a  commutative algebra in the Drinfeld center $\Z(\C)$ of $\C$, which satisfies $\Hom_\C(\1, A) \cong k$. The algebra $(A, \sigma)$ is called the \emph{induced central algebra of $F$} \cite[Section 6]{tensor-exact}.

\medbreak 
The category $\C_A$ or right $A$-modules in $\C$ becomes a tensor category with tensor product $\otimes_A$ and unit object $A$, equipped with a tensor functor $F_A = ? \otimes A: \C \to \C_A$.
Furthermore, there is an equivalence of tensor categories $\kappa: \C'' \to \C_A$ such that the following diagram of tensor functors is commutative up to a monoidal natural isomorphism
$$\xymatrix{\C \ar[r]^{F} \ar[rd]_{F_A} & \C'' \ar[d]^{\kappa}\\ & \C_A.}$$

\medbreak 
For instance, let $H$ be a finite dimensional Hopf algebra. Then the induced central algebra $(A,\sigma)$ of the forgetful functor $\comod H \to \vect_k$ can be described as follows: As an algebra in $\comod H$,	$A=H$ with the right regular coaction $\Delta: A \to A \otimes H$. For any right $H$-comodule $V$, the half-braiding $\sigma_V: A \otimes V \to V \otimes A$ is determined by the right adjoint action of $H$ in the form	$$\sigma_{V}(h \otimes v) =  v_{(0)} \otimes S(v_{(1)})\, h\, v_{(2)}, \quad h \in H, v \in V.$$ See \cite[Example 6.3]{tensor-exact}.

\medbreak Let $\C$ be a tensor category. We shall say that an algebra $(A, \sigma)$ in the center $\Z(\C)$ is a \emph{central algebra of} $\C$ if $(A, \sigma)$ is the induced central algebra of some dominant perfect tensor functor. Thus a central algebra of $\C$ is a commutative algebra $(A, \sigma) \in \Z(\C)$ such that $\Hom_\C(\1, A)$ is one-dimensional. The dominant tensor functor corresponding to $(A, \sigma)$ is the functor $- \otimes A: \C \to \C_A$, whose right adjoint is the forgetful functor $\C_A \to \C$.

\begin{definition}
An exact sequence sequence of tensor categories $\C' \toto \C \overset{F}\toto \C''$ is called a \emph{perfect exact sequence} if $F$ is a perfect tensor functor. 
\end{definition}

Every exact sequence of finite tensor categories is a perfect exact sequence.
In addition, in every perfect exact sequence $\C' \toto \C \overset{F}\toto \C''$, the kernel $\C'$ is a finite tensor category; see \cite[Proposition 3.15]{tensor-exact}. Thus the induced Hopf algebra of a perfect exact sequence is always finite dimensional.

\medbreak 
Let $(\E): \; \C' \toto \C \toto \C''$ be a perfect exact sequence and let  $(A, \sigma) \in \Z(\C)$ be the induced central algebra of $F$. Then $(A, \sigma)$ is  \emph{self-trivializing}, that is, $A \otimes A$ is a trivial object of $\C_A$. Let $\langle A \rangle$ be the smallest abelian subcategory of $\C$ containing
$A$ and stable under direct sums, subobjects and quotients. Then $F_A: \C \to \C_A$ is a normal dominant tensor functor with $\KER_{F_A} = \langle A \rangle$. Moreover, $(\E)$ is equivalent to the exact sequence
\begin{equation}\langle A \rangle \toto \C \overset{F_A}\toto	\C_A.
\end{equation}
See \cite[Subsection 6.2]{tensor-exact}.

%\begin{remark}\label{a=h} Let $(A, \sigma) \in \Z(\C)$ be the induced central algebra of a perfect exact sequence \eqref{exacta-fusion} and let $H$ be its induced Hopf algebra. Then there are equivalences of tensor categories $\langle A \rangle \cong \C'\cong \comod H$. Under these equivalences, $(A, \sigma\vert_{\langle A \rangle})$ becomes identified with the induced central algebra of the tensor functor $\omega= \Hom_{\C''}(\1, Ff(-)): \C' \to \vect$, and thus with the induced central algebra of  the forgetful functor $U: \comod H  \toto \vect$. \end{remark}

\subsection{Examples from finite groups: equivariantization and crossed extensions by matched pairs}

Let $G$ be a finite group and let $\C$ be a tensor category. A monoidal functor $\rho: \underline G^{\op} \to  \Aut_k(\C)$ is called a right action of $G$ on $\C$ by $k$-linear autoequivalences. 

\medbreak 
The \emph{equivariantization} of $\C$ under the action $\rho$ is the $k$-linear abelian category $\C^G$ whose objects are pairs $(X, r)$, where $X$ is an object of $\C$ and $r = (r^g)_{g \in G}$ is a collection of isomorphisms $r^g:\rho^g(X)
\to X$, $g \in G$, such that for all $g, h \in G$,
\begin{equation}\label{deltau} r^g \rho^g(r^h) = r^{hg} (\rho^{g,
	h}_2)_X,\end{equation}
where $\rho_2^{g, h}:\rho^g\rho^h \to \rho^{hg}$ is the monoidal structure of $\rho$, and 
$r^e{\rho_0}_X=\id_X$. A morphism $f: (X, r) \to
(Y, r')$ is a morphism $f: X \to Y$ in $\C$ such that $fr^g = {r'}^g\rho^g(f)$, for all $g \in G$.

\medbreak The forgetful functor $F: \C^G \to \C$ gives rise to a perfect exact sequence of tensor categories
\begin{equation}\label{equiv-exact}
\rep G \toto \C^G \overset{F}\toto \C,
\end{equation} with induced Hopf algebra $H \cong k^G$ \cite[Subsection 5.3]{tensor-exact}. An exact sequence of tensor categories equivalent to \eqref{equiv-exact} is called an \emph{equivantization exact sequence}.

\medbreak A \emph{$G$-grading} of $\C$ is a decomposition $\C = \bigoplus_{g \in G}\C_g$ into abelian subcategories $\C_g$, called the  \emph{homogeneous components} of the grading, such that $$\C_g \otimes \C_h \subseteq \C_{gh},$$ for all $g, h \in G$. 
The neutral homogeneous component $\C_e$ is a tensor subcategory of $\C$. A $G$-grading $\C = \bigoplus_{g \in G}\C_g$ is faithful if $\C_g \neq 0$, for all $g \in G$.

\medbreak
Every finite tensor category $\C$ has a faithful universal grading $\C = \bigoplus_{u \in U(\C)}\C_u$, with neutral homogeneous component $\C_e$ equal to the adjoint subcategory $\C_{ad}$, that is, the smallest tensor
Serre subcategory of $\C$ containing the objects $X \otimes X^*$, where $X$ runs over the simple objects of $\C$. The group $U(\C)$ is called the \emph{universal grading group} of $\C$.

The upper central series of $\C$, $\dots \subseteq \C^{(n+1)} \subseteq \C^{(n)} \subseteq \dots \subseteq \C^{(0)} = \C$, is defined as $\C^{(0)} = \C$ and $\C^{(n+1)} = (\C^{(n)})_{ad}$, for all $n \geq 0$.
A tensor category $\C$ is called \emph{nilpotent} if there exists some $n \geq 0$ such that $\C^{(n)} \cong \vect$.
See \cite[Section 3.5 and 4.14]{EGNO}, \cite{gel-nik}.

\medbreak 
Let $\C$ be a tensor category and let $(G, \Gamma)$ be a matched pair of  groups. A $(G, \Gamma)$-\emph{crossed action} on $\C$  consists of the following data:

\begin{itemize}\item  A $\Gamma$-grading $\C = \bigoplus_{s \in \Gamma}
	\C_s$.
	
	\item  A right action of $G$ by $k$-linear autoequivalences $\rho:
	\underline{G}^{\op} \to \Aut_k(\C)$ such that
	\begin{equation}\label{rho-partial} \rho^g(\C_s) = \C_{s \lhd g},\quad \forall
	g\in G, \, s\in \Gamma,\end{equation}
	
	\item A collection of natural isomorphisms $\gamma = (\gamma^g)_{g\in G}$:
	\begin{equation}\label{gamma}\gamma^g_{X, Y}: \rho^g(X \otimes Y) \to
	\rho^{t\rhd g}(X) \otimes \rho^g(Y), \quad X \in \C, \, t\in \Gamma,\,  Y \in
	\C_t, \end{equation}
	
	\item A collection of isomorphisms $\gamma^g_0: \rho^g(\uno) \to \uno$, $g \in
	G$.
\end{itemize}

These data are subject to the commutativity of the following diagrams:

\begin{itemize}\item[(a)] For all $g \in G$, $X \in \C$, $s, t \in \Gamma$, $Y
	\in \C_s$, $Z
	\in \C_t$,
	\begin{equation*}
	\xymatrix @C=0.6in @R=0.45in{
		\rho^g(X \otimes Y \otimes Z) \ar[rr]^{\gamma^g_{X\otimes Y, Z}}
		\ar[d]_{\gamma^g_{X, Y \otimes Z}} & & \rho^{t \rhd g}(X \otimes Y) \otimes
		\rho^g(Z) \ar[d]^{\gamma^{t\rhd g}_{X, Y} \otimes \id_{\rho^g(Z)}} \\
		\rho^{st \rhd g}(X) \otimes \rho^g(Y \otimes Z)\ar[rr]_{\id_{\rho^{st \rhd
					g}(X)}
			\otimes \gamma^g_{Y, Z} \qquad }&& \rho^{s \rhd (t \rhd g)}(X) \otimes \rho^{t
			\rhd g}(Y) \otimes \rho^g(Z)}
	\end{equation*}
	
	\item[(b)] For all $g \in G$, $X \in \C$,
	\begin{equation*}\xymatrix @C=0.6in @R=0.45in{
		\rho^g(X) \otimes \rho^g(\uno)  \ar[dr]_{\id_{\rho^g(X)} \otimes \gamma^g_0
			\quad } & \ar[l]_{\quad \gamma^g_{X, \uno}} \rho^g(X) \ar[d]^=
		\ar[r]^{\gamma^g_{\uno, X} \quad } & \rho^g(\uno) \otimes \rho^g(X)
		\ar[dl]^{\gamma^g_0 \otimes \id_{\rho^g(X)}} \\
		& \rho^g(X) }\end{equation*}

	\item[(c)] For all $g, h \in G$, $X \in \C$, $s\in \Gamma$, $Y \in \C_s$,
	\begin{equation*}\xymatrix@C=0.6in @R=0.45in{
		\rho^g\rho^h(X\otimes Y) \ar[dd]_{\rho^g(\gamma^h_{X, Y})} \ar[r]^{{\rho_2}_{X
				\otimes Y}^{g, h}} & \rho^{hg}(X \otimes Y) \ar[d]^{\gamma^{hg}_{X, Y}} \\
		& \rho^{s \rhd hg}(X) \otimes \rho^{hg}(Y) \\
		\rho^g(\rho^{s\rhd h}(X) \otimes \rho^h(Y)) \ar[r]_{\gamma^g_{\rho^{s\rhd h}(X),
				\rho^h(Y)}}  & \rho^{(s\lhd h)\rhd g}\rho^{s\rhd h}(X) \otimes \rho^g\rho^h(Y)
		\ar[u]_{{\rho_2}_X^{(s\lhd h) \rhd g, s \rhd h} \otimes
			{\rho_2}_Y^{g, h}} }\end{equation*}

	\item[(d)] For all $g, h \in G$,
	\begin{equation*}\xymatrix@C=0.6in @R=0.45in{
		\rho^g\rho^h(\uno) \ar[d]_{\rho^{g}(\gamma^h_0)} \ar[r]^{(\rho_2^{g, h})_{\uno}}
		& \rho^{hg}(\uno) \ar[d]^{\gamma^{hg}_0} \\
		\rho^g(\uno) \ar[r]_{\gamma^{g}_0} & \uno}
	\end{equation*}
	
	\item[(e)] For all $X \in \C$, $s \in \Gamma$, $Y \in \C_s$,
	\begin{equation*}\xymatrix@C=0.6in @R=0.45in{
		X\otimes Y \ar[dr]_{{\rho_0}_X \otimes {\rho_0}_Y} \ar[r]^{{\rho_0}_{X\otimes
				Y}} & \rho^e(X\otimes Y) \ar[d]^{\gamma^{e}_{X, Y}} \\
		& \rho^e(X)\otimes \rho^e(Y)}
	\xymatrix@C=0.6in @R=0.45in{
		\uno \ar[dr]_= \ar[r]^{{\rho_0}_\uno} & \rho^e(\1) \ar[d]^{\gamma^e_0}
		\\
		& \uno}
	\end{equation*}
\end{itemize}

We say that a  tensor category $\C$ is a \emph{$(G, \Gamma)$-crossed tensor category} if it is equipped with a $(G, \Gamma)$-crossed action.
In this case there is a canonical  exact sequence of tensor categories
\begin{equation}\label{c-abelian}\Rep G \toto \C^{(G, \Gamma)} \overset{F}\toto
\C,\end{equation} 
where $\C^{(G, \Gamma)}$ is the tensor category defined as follows: As a $k$-linear category, it is the equivariantization $\C^G$ of $\C$ under the action $\rho$, while the tensor product is defined in the form $(X, r) \otimes (Y, r') = (X \otimes Y, \tilde
r)$, where  ${\tilde r}^g$, $g \in G$, is the composition
$$\bigoplus_{s \in \Gamma} \rho^g(X \otimes Y_s)
\overset{\oplus_s\gamma^g_{X, Y_s}}\toto \bigoplus_{s \in \Gamma} \rho^{s \rhd
	g}(X) \otimes \rho^g(Y_s) \overset{\oplus_s r^{s \rhd g} \otimes {r'}^g_s}\toto
\bigoplus_{s \in \Gamma} X \otimes Y_{s\lhd g} = X \otimes Y,$$
for $Y = \bigoplus_{s \in \Gamma}Y_s$, $Y_s \in \C_s$. The functor $F: \C^{(G, \Gamma)} \to \C$ is the forgetful functor $F(V, (r^g)_{g \in G}) = V$. See \cite[Theorem 6.1]{crossed-action}.

\medbreak 
We call $\C^{(G, \Gamma)}$  a  \emph{$(G, \Gamma)$-crossed extension of $\C_e$}.
Thus, a $(G, \Gamma)$-crossed extension is a unified formulation of equivariantizations and group graded extensions. In fact,
suppose that $\rho: \underline{G}^{op} \to \Aut_{\otimes}(\C)$ is an action by tensor auto-equivalences of a tensor category $\C$. Then the equivariantization $\C^G$ is a $(G, \{e\})$-crossed extension of $\C$, where $\{e\}$ is the trivial group endowed with the trivial actions $\lhd: \{e\} \times G \to G$ and $\rhd: \{e\} \times G \to \{e\}$.	
On the other hand, if $\C$ is a tensor category graded by a group $\Gamma$, then $\C$ is a $(\{e\}, \Gamma)$-crossed extension of $\C_e$ in a similar way.

\medbreak 
Further examples of $(G, \Gamma)$-crossed extensions are the categories of representations of abelian extensions of Hopf algebras. Indeed for abelian every exact sequence $k \toto k^\Gamma \toto H \toto kG \toto k$, the category  $H\modd$ is a $(G, \Gamma)$-crossed extension of $\vect$. See \cite[Subsection 8.2]{crossed-action}.

\medbreak 
There exist $(G, \Gamma)$-crossed extensions that cannot be built up by means of equivariantizations or group graded extensions.
For instance, let $n \geq 5$ be an odd integer and let $H = k^{\Aa_{n-1}} \# kC_n$, $n \geq 5$,  be the bicrossed product associated to the matched pair $(C_n, \Aa_{n-1})$ arising from the exact factorization $\Aa_{n} = \Aa_{n-1} C_n$ of the alternating group $\Aa_n$,  where $C_n = \langle (12\dots n)\rangle$ \cite[Section 8]{mk-ext}. 
	
As shown in \cite[Example 4.3]{char-crossed-action}, if $G$ is a nontrivial finite group, then $H\modd$ is not equivalent to a $G$-equivariantization or to a $G$-graded extension of any fusion category $\C$. 

\subsection{Abelian exact sequences of tensor categories and matched pairs of groups}

\begin{definition} (\cite[Definition 5.1]{char-crossed-action}.) An exact sequence of tensor categories is an \emph{abelian exact sequence} if its induced Hopf algebra $H$ is finite dimensional and commutative.
\end{definition}

Equivalently, an abelian exact sequence is an exact sequence of the form $$\Rep G \toto \C \toto \C'',$$ such that the induced
tensor functor $\omega: \Rep G \to \vect$ is monoidally isomorphic to the forgetful functor or, in other words, such that the corresponding rank-one module category $\M$ is equivalent to the trivial rank-one module category of $\Rep G$. 

\medbreak 
Examples of abelian exact sequences of tensor categories arise from equivariantization under the action of a finite group on a tensor category and also from Hopf algebra extensions of the form $$k \toto k^G \toto H \toto H'' \toto k,$$ where $G$ is a finite group.

\medbreak Let $(G, \Gamma)$ be a matched pair of finite groups and let $\C$ be a $(G, \Gamma)$-crossed tensor category. The induced Hopf algebra of the associated exact sequence of tensor categories
\begin{equation}\Rep G \toto \C^{(G, \Gamma)} \overset{F}\toto
\C\end{equation} 
is $H \cong k^G$, so that \eqref{c-abelian} is an abelian exact sequence. 
The main result of \cite{char-crossed-action} says that crossed extensions by matched pairs do in fact exhaust  the class of abelian exact sequences of finite tensor categories:

\begin{theorem}\emph{(\cite[Theorem 1.1]{char-crossed-action}.)} Let $G$ be a finite group and let $$(\E): \; \Rep G \toto \C \toto \D$$ be an abelian exact sequence of finite tensor categories. Then there exists a finite group $\Gamma$ endowed with mutual actions by permutations $\rhd: \Gamma \times G \to G$, $\lhd: \Gamma  \times G \to \Gamma$ and a $(G, \Gamma)$-crossed action on $\D$ such that $(\E)$ is equivalent to the exact sequence $\Rep G \toto \D^{(G, \Gamma)} \toto \D$.
\end{theorem}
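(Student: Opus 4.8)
The plan is to reconstruct $\C$ from its induced central algebra and to read off the matched pair, the grading and the crossed action from the half-braiding of that algebra. Since $(\E)$ is an abelian exact sequence of finite tensor categories it is perfect, so by the description of perfect exact sequences via central algebras recalled above there is a commutative algebra $(A,\sigma)$ in the Drinfeld center $\Z(\C)$ with $\Hom_\C(\uno,A)\cong k$, an equivalence $\D\cong\C_A$ under which $F$ becomes the free module functor $F_A=-\otimes A$, and $\KER_{F_A}=\langle A\rangle=\Rep G$. Because the induced Hopf algebra of $(\E)$ is $k^G$ and the associated rank-one module category is trivial, $(A,\sigma)$ is, as an algebra in $\langle A\rangle\cong\Rep G$, the regular algebra $\Fun(G)=k^G$; in particular $A=\bigoplus_{g\in G}L_g$, where the $L_g=ke_g$ are the lines spanned by the primitive idempotents $e_g$, and $G$ permutes the $L_g$ by translation.

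First I would recover the $G$-action. Since $A=\Fun(G)$ is the regular algebra of $\Rep G\subseteq\C$, the free module functor $\C\to\C_A=\D$ is a de-equivariantization, and the residual translation action of $G$ on the idempotents of $A$ equips $\D$ with a family $\rho=(\rho^g)_{g\in G}$ of $k$-linear autoequivalences together with an equivalence $\C\cong\D^G$ onto the $G$-equivariantization, as $k$-linear abelian categories; the unit constraints $\gamma^g_0:\rho^g(\uno)\to\uno$ are part of this identification. At this stage $\rho$ need not be monoidal, and its failure to respect $\otimes$ is exactly what the grading and the isomorphisms $\gamma$ will encode.

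The heart of the argument is the construction of $\Gamma$, of the $\Gamma$-grading on $\D$, and of the matched pair actions. For an object $X$ of $\D=\C_A$, since $\sigma_X$ is compatible with the algebra structure of $A$ it permutes the primitive idempotents, so on each line it restricts to an isomorphism $L_g\otimes X\to X\otimes L_{h}$ for a uniquely determined $h\in G$; the hexagon axiom for $\sigma$ shows that $h$ depends on $g$ through a permutation of $G$ and that this permutation is multiplicative in $X$. Collecting these permutations defines a grading of $\D$ by a subgroup $\Gamma$ of the symmetric group on $G$, necessarily finite, with homogeneous components $\D_s$, $s\in\Gamma$; the assignment $(s,g)\mapsto s\rhd g$ records the permutation, while $(s,g)\mapsto s\lhd g$ records the induced shift $\rho^g(\D_s)=\D_{s\lhd g}$. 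The matched pair identities \eqref{matched} then follow from the compatibility of $\sigma$ with the tensor product and the associativity constraint of $\C$.

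Finally I would produce the crossed-action isomorphisms $\gamma^g_{X,Y}$ by transporting the tensor structure of $\C$ through the equivalence $\C\cong\D^G$: comparing $\rho^g(X\otimes Y)$ with $\rho^{t\rhd g}(X)\otimes\rho^g(Y)$ for $Y\in\D_t$ yields the natural isomorphisms \eqref{gamma}, and the pentagon and hexagon coherence of $\C$ translate into the commutativity of the diagrams (a)--(e) defining a $(G,\Gamma)$-crossed action on $\D$. With this data in place the modified tensor product on $\D^G$ is precisely that of $\D^{(G,\Gamma)}$, the forgetful functor gives the exact sequence \eqref{c-abelian}, and matching the fiber functors identifies $(\E)$ with $\Rep G\toto\D^{(G,\Gamma)}\toto\D$. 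The main obstacle is the third step: extracting the grading group $\Gamma$ --- which is not supplied by the hypotheses --- from the permutation action of $\D$ on the idempotents of $A$, and verifying the matched pair relations; once $\Gamma$ and $\rhd,\lhd$ are in hand, the coherence conditions (a)--(e) reduce to a long but essentially routine diagram chase.
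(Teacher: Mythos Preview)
The paper does not prove this theorem: it is stated with a citation to \cite[Theorem 1.1]{char-crossed-action} and no argument is given here, so there is no ``paper's own proof'' against which to compare your proposal.

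That said, your outline follows the same architecture that the surrounding discussion in the paper suggests (and that \cite{char-crossed-action} in fact uses): pass to the induced central algebra $(A,\sigma)\in\Z(\C)$, identify $A$ with the regular algebra $k^G$ in $\Rep G$ because the sequence is abelian, realise $\D\cong\C_A$ as a de-equivariantization to obtain the $G$-action $\rho$, and then extract $\Gamma$ and the matched pair relations from the way the half-braiding permutes the primitive idempotents $e_g$ of $A$. One point to tighten: you write that for $X\in\D=\C_A$ the half-braiding $\sigma_X$ permutes the idempotents, but $\sigma$ is a half-braiding on $A$ in $\C$, so $\sigma_X$ is defined for $X\in\C$, not for $A$-modules. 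What actually produces the $\Gamma$-grading on $\D$ is the half-braiding evaluated on objects of $\C$ and then descended through $F_A$; equivalently, one reads the permutation from $\sigma_{\tilde X}$ for a lift $\tilde X\in\C$ of $X$ and checks it is independent of the lift on indecomposable summands. Apart from this slip in where $\sigma$ lives, and the acknowledged bookkeeping in verifying \eqref{matched} and diagrams (a)--(e), your plan matches the intended strategy.
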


\section{Exact sequences of finite tensor categories with respect to a module category}\label{s-emc}

Let $\C$ and $\D$ be finite $k$-linear abelian categories. Their \textit{Deligne tensor
	product} is a finite tensor category denoted $\C \boxtimes \D$ endowed with a
functor $\boxtimes: \C \times \D \to \C \boxtimes \D$ exact in both variables
such that for any $k$-bilinear right exact functor $F : \C \times \D \to \A$,
where $\A$ is a $k$-linear abelian category, there exists a unique right exact
functor
$\tilde F : \C \boxtimes \D \to \A$ such that $\tilde F \circ \boxtimes = F$.
Such a category exists and it is unique up to equivalence. In fact, if $\C
\cong A\textrm{-mod}$ and $\D \cong B\textrm{-mod}$, for some finite dimensional
$k$-algebras $A$ and $B$, then $\C \boxtimes \D \cong (A\otimes
B)\textrm{-mod}$. See \cite{deligne-t}.
The tensor product of two finite (multi-)tensor categories $\C$ and $\D$ is again a
finite tensor category and if $\C$ and $\D$ are (multi-)fusion categories, then so is
$\C \boxtimes \D$.

\medbreak 
Let $Y, Z$ be objects of $\C$ and $\D$, respectively, and let us denote $Y \boxtimes Z = \boxtimes(Y, Z)$. Then $\Hom_{\C \boxtimes \D}(Y_1 \boxtimes Z_1, Y_2 \boxtimes Z_2) \cong \Hom_{\C}(Y_1, Y_2) \otimes \Hom_{\D}(Z_1, Z_2)$, for all $Y_1, Y_2 \in \C$, $Z_1, Z_2 \in \D$.
The simple objects of $\C \boxtimes \D$ are exactly those of the form $Y \boxtimes Z$, where $Y$ is a simple object of $\C$ and $Z$ is a simple object of $\D$.

\medbreak 
Let $\C' \subseteq \C$ and $\C''$ be finite tensor categories and let $\M$ be an
exact  indecomposable left $\C'$-module category. In particular, $\M$ is finite. Let $\End(\M)$ denote the
category of $k$-linear right exact endofunctors of $\M$, which is a monoidal
category with tensor product given by composition of functors and unit object $1_\M:\M \to \M$. Let also $i: \C'
\to \C$ denote the inclusion functor.

\begin{definition} 
An exact sequence of tensor categories \textit{with respect to $\M$} is a
sequence of exact monoidal functors
\begin{equation}\label{exact-m} \quad \C' \overset{i}\longrightarrow \C
\overset{F}\longrightarrow \C'' \boxtimes \End(\M),\end{equation}
such that $F$ is dominant, $\C' = \KER_F$ coincides with the subcategory of $\C$ mapped to $\End(\M)$ under $F$ and, $F$ is normal in the sense that for every object $X$ of $\C$, there exists a subobject $X_0$ of $X$ such that $F(X_0)$ is the largest subobject of $F(X)$ contained in $\End(\M)$. 

A tensor category  $\C$ fitting into an exact sequence \ref{exact-m} with respect to $\M$ is called an \emph{extension of $\C''$ by $\C'$ with respect to $\M$}.
\end{definition}

\medbreak The notion of exact sequence with respect to a module category was introduced in \cite{eg-emc} and it generalizes the notion of exact sequence of \cite{tensor-exact}. Indeed, suppose that \eqref{exact-m} is an exact sequence of finite tensor categories with respect to $\M$. Then $\FPdim \C = \FPdim \C' \FPdim \C''$. Moreover this condition characterizes the exactness of \eqref{exact-m} under the assumptions that $F$ is dominant and $\C' \subseteq \KER_F$ \cite[Theorem 3.6]{eg-emc}.

\medbreak 
Consider an exact sequence of finite tensor categories \begin{equation}\label{wrt-rk1}
\C' \toto \C \toto \C'',
\end{equation} as introduced in Section \ref{exact-rk1}. Then \eqref{wrt-rk1} induces a fiber functor $\omega: \C' \to \vect$, thus making $\M = \vect$ into a rank-one $\C'$-module category. In this way \eqref{wrt-rk1} becomes an exact module category \emph{with respect to the rank-one module category $\M$}. 

\medbreak 
The Deligne tensor product $\C'' \boxtimes \C'$ of two finite tensor categories gives rise to an exact sequence \eqref{exact-m} with respect to any exact indecomposable $\C'$-module category $\M$, where $F: \C'' \boxtimes \C' \to \C'' \boxtimes \End(\M)$ is the natural dominant monoidal functor. \cite{eg-emc}. 

\medbreak
The notion of exact sequence with respect to a module
category is self-dual in the following sense: Let $\N$ be an indecomposable exact $\C''$-module category. Then \eqref{exact-m} induces an exact sequence with respect to $\N$:
\begin{equation}\label{exact-n} \quad (\C'')^*_\N \overset{F^*}\longrightarrow \C^*_{\N\boxtimes \M}
\overset{i^*}\longrightarrow (\C')^*_\M \boxtimes \End(\N).\end{equation}

\medbreak 
Suppose that \eqref{exact-m} is an exact sequence with respect to $\M$. Observe that if $\C$ is a fusion category, then $\C'$ and $\C''$ are fusion categories and $\M$ is a finite semisimple $\C'$-module category. 

Assume conversely that $\C'$ and $\C''$ are fusion categories. In particular  $\M$ is a finite semisimple module category over $\C'$.  By \cite[Theorem 3.8]{eg-emc} $\C$ is also a fusion category.
In this case the monoidal category $\End(\M) \cong \leftidx{^\vee}{\!}{}\M \boxtimes \M$ is a multifusion category. Here, $\leftidx{^\vee}{\!}{}\M$ is the right $\C'$-module category such that $\leftidx{^\vee}{\!}{}\M = \M$ and $M \bar\otimes X = {}^*X \otimes M$, $X \in \C'$, $M \in \M$; that is, if $\M \cong \C'_A$ is the category of right $A$-modules in $\C'$ for some indecomposable algebra $A \in \C'$, then $\leftidx{^\vee}{\!}{}\M \cong {}_A\C'$.

\begin{lemma}\label{quotient-pt} Let $\C' \overset{i}\longrightarrow \C
\overset{F}\longrightarrow \C'' \boxtimes \End(\M)$ be an exact sequence with respect to $\M$. If $\C$ is a pointed fusion category, then so is $\C''$. \end{lemma}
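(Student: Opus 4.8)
The plan is to exploit the multiplicativity of Frobenius--Perron dimension together with the characterization of pointed fusion categories in terms of their simple objects being invertible. Recall that a fusion category is pointed precisely when every simple object is invertible, equivalently when $\FPdim X = 1$ for every simple object $X$, equivalently when $\FPdim \C$ equals the number of isomorphism classes of simple objects. The key structural input I would use is that, by the discussion preceding the statement, since $\C$ is fusion both $\C'$ and $\C''$ are fusion and $\M$ is a finite semisimple $\C'$-module category, and moreover $\FPdim \C = \FPdim \C' \FPdim \C''$ by \cite[Theorem 3.6]{eg-emc}.

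First I would observe that the dominant tensor functor $F: \C \to \C'' \boxtimes \End(\M)$ sends simple objects to objects all of whose simple constituents are invertible when $\C$ is pointed. Indeed, if $X \in \C$ is simple then $X$ is invertible, so $F(X)$ is an invertible object of $\C'' \boxtimes \End(\M)$ (tensor functors preserve duals and the evaluation/coevaluation, hence send invertible objects to invertible objects). Since $F$ is dominant, every simple object of $\C'' \boxtimes \End(\M)$ is a subobject of some $F(X)$ with $X$ simple in $\C$; but a simple subobject of an invertible object is that invertible object itself, so every simple object of $\C'' \boxtimes \End(\M)$ is invertible. Using that the simple objects of a Deligne product are exactly the $Y \boxtimes Z$ with $Y, Z$ simple, and that $\End(\M)$ is a multifusion category, I would deduce that every simple object of $\C''$ is invertible, i.e. $\C''$ is pointed. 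One must be slightly careful because $\End(\M)$ is only multifusion (its unit need not be simple), but invertibility of all simple objects of the product still forces invertibility of all simple objects of each tensor factor $\C''$, since a simple object of $\C''$ determines a nonzero object $Y \boxtimes Z$ for a suitable simple $Z$ in $\End(\M)$.

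The main obstacle I expect is handling the factor $\End(\M)$ correctly, since it is a multifusion rather than a fusion category and its unit object $1_\M$ decomposes as a sum of the distinct indecomposable summands corresponding to the blocks of $\M$. A cleaner route that sidesteps this is to argue numerically: pointedness of a fusion category $\B$ is equivalent to $\FPdim \B$ being equal to the order of the group of invertible objects, and more usefully to the statement that $\FPdim X = 1$ for all simple $X$. Since $F$ is a tensor functor and $\C$ is pointed, $F$ restricted to simples lands in invertibles; projecting $\C'' \boxtimes \End(\M) \to \C''$ (for instance by composing with a tensor functor induced by a fiber-type functor $\End(\M) \to \vect$ on a chosen block, or simply by reading off the $\C''$-component of a simple constituent of $F(X)$) yields that the invertible objects of $\C'' \boxtimes \End(\M)$ have invertible $\C''$-components. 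Combined with dominance of $F$ onto $\C'' \boxtimes \End(\M)$, which forces every simple of $\C''$ to appear, this shows every simple object of $\C''$ is invertible, hence $\C''$ is pointed.

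In summary, the proof reduces to the two facts that tensor functors preserve invertibility and that dominant functors hit every simple object, together with the elementary structure of simple objects in a Deligne tensor product; the only genuinely delicate point is that $\End(\M)$ is multifusion, which I would address by working with simple constituents of $F(X)$ and noting that their $\C''$-components are forced to be invertible. I therefore expect the argument to be short once the invertibility-preservation of $F$ and the dominance are combined.
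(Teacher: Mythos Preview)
Your overall strategy --- use that $F$ sends invertibles to invertibles and that dominance reaches every simple object --- is exactly the paper's, but your handling of the multifusion factor contains a genuine gap. You assert that since $F(X)$ is invertible and every simple of $\C''\boxtimes\End(\M)$ is a subobject of some $F(X)$, every simple of the product is invertible. This fails in a multifusion category: the unit $\1_{\C''}\boxtimes 1_\M$ is not simple, so an object $W$ with $W\otimes W^*\cong\1$ need not be simple, and its simple summands are \emph{not} invertible in that sense. Concretely, $\1_{\C''}\boxtimes 1_i$ is simple, yet $(\1_{\C''}\boxtimes 1_i)\otimes(\1_{\C''}\boxtimes 1_i)^*\cong \1_{\C''}\boxtimes 1_i\not\cong \1_{\C''}\boxtimes 1_\M$ whenever $\M$ has rank greater than one. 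Your fallback of ``projecting $\C''\boxtimes\End(\M)\to\C''$'' is not made precise, and in general there is no tensor functor $\End(\M)\to\vect$ to realize it.

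The paper closes this gap by working directly with simple constituents rather than with invertibility in the multifusion target. Writing $1_\M=\bigoplus_{i\in I}1_i$ with each $1_i$ simple, one fixes a simple $Y\in\C''$ and an index $i$; then $Y\boxtimes 1_i$ is simple, so dominance provides a simple $X\in\C$ with $Y\boxtimes 1_i$ a summand of $F(X)$. Hence $(Y\otimes Y^*)\boxtimes 1_i$ is a summand of $F(X)\otimes F(X)^*\cong F(X\otimes X^*)\cong F(\1)\cong\bigoplus_{j}\1\boxtimes 1_j$, so every simple constituent of $Y\otimes Y^*$ is $\1$, forcing $Y\otimes Y^*\cong\1$. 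This is your idea carried out correctly: instead of claiming the simple summands of $F(X)$ are invertible in $\C''\boxtimes\End(\M)$, one reads off from $F(X)\otimes F(X)^*\cong\bigoplus_j\1\boxtimes 1_j$ that their $\C''$-components are invertible in $\C''$.
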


\begin{proof} By assumption every simple object $X$ of $\C$ is invertible, that is, $X \otimes X^* \cong \1$.
Let $1_\M = \bigoplus_{i\in I} 1_i$ be a decomposition of the unit object $1_\M$ of $\End(\M)$ into a direct sum of simple subobjects $1_i$, $i \in I$. Then $1_i \otimes 1_j \cong \delta_{i, j} 1_i$, and $1_i^* \cong {}^*1_i \cong 1_i$, for all $i, j \in I$; see \cite[Section 4.3]{EGNO}.
	
Let $Y$ be a simple object of $\C''$. For each fixed $i \in I$, $Y \boxtimes 1_i$ is a simple object of $\C'' \boxtimes \End(\M)$. Since the functor $F: \C \to \C'' \boxtimes \End(\M)$ is dominant, then $Y \boxtimes 1_i$ is a direct summand of $F(X)$ for some simple object $X$ of $\C$.
Therefore $(Y \boxtimes 1_i) \boxtimes (Y \boxtimes 1_i)^* \cong (Y\otimes Y^*) \boxtimes 1_i$ is a direct summand of $F(X) \otimes F(X)^*$. On the other hand, 
$$F(X \otimes X^*) \cong F(\1) \cong \1_{\C \boxtimes \End(\M)} \cong \1 \boxtimes 1_\M \cong \bigoplus_{i\in I} \1 \boxtimes 1_i.$$
Therefore the only simple constituent of $Y \otimes Y^*$ is the trivial object $\1$ of $\C''$. This implies that $Y \otimes Y^* \cong \1$, that is, $Y$ is invertible. Since the simple object $Y$ was arbitrary, this shows that $\C''$ is pointed, as claimed.
\end{proof}

\subsection{Exact sequences and exact factorizations}

Let $\C$ be a fusion category. Recall from \cite{gelaki} that an $\C$ is endowed with an \emph{exact factorization} into a product of two fusion subcategories $\A$ and $\B$ if the following conditions hold: 

\begin{itemize}
	\item $\C$ coincides with the full abelian subcategory spanned by direct summands of $X \otimes Y$, $X \in \A$, $Y \in \B$,
	\item $\A \cap \B = \langle \1\rangle$.
\end{itemize}  

If this holds, we write $\C = \A \bullet \B$. 

\medbreak By \cite[Theorem 3.8]{gelaki}, $\C = \A \bullet \B$ if and only if every simple object $Z$ of $\C$ admits a decomposition $Z \cong X \otimes Y$, for unique (up to isomorphism) simple objects $X \in \A$, $Y \in \B$.

Thus the notion of an exact factorization can be formulated as a category equivalence as follows: $\C$ has an exact factorization $\C = \A \bullet \B$ if and only if the tensor product of $\C$ induces an equivalence of $k$-linear categories $$\otimes : \A \boxtimes \B \to \C.$$ 

\medbreak For example, let $E$ be a finite group. Exact factorizations of $E$ 
correspond exactly to exact factorizations of the category $\vect_E$. In fact, if $\Gamma$ and $G$ are subgroups of $E$, then $E = \Gamma G$ is an exact factorization of $E$ if and only if $\vect_E = \vect_\Gamma \bullet \vect_G$ is an exact factorization of $\vect_E$. 

\medbreak 
Exact factorizations and extensions are related as follows: Every exact sequence $\C' \overset{i}\longrightarrow \C
\overset{F}\longrightarrow \C'' \boxtimes \End(\M)$ with respect to $\M$ induces an exact factorization of the dual fusion category $\C^*_{\C'' \boxtimes \M}$:
\begin{equation}\C^*_{\C'' \boxtimes \M} = \C'' \bullet (\C')^*_\M.
\end{equation}
 
Conversely, every exact factorization $\C = \A \bullet \B$ of a fusion category $\C$ induces an exact sequence, with respect to any indecomposable $\A$-module category $\N$
\begin{equation}\A^*_{\N} \toto \C^*_{\B \boxtimes \N} \toto \B \boxtimes \End(\N).
\end{equation}
In particular, $\C$ fits into an exact sequence with respect to $\N = \A$: 
\begin{equation}\A \toto \C \toto \B \boxtimes \End(\A).\end{equation} See \cite[Theorem 4.1]{gelaki}.

\subsection{Exact factorizations arising from exact sequences of finite tensor categories.}

Consider an exact sequence of Hopf algebras 
\begin{equation}\label{e-hopf}
k \toto H' \toto H \toto	H'' \toto k,
\end{equation} such that $H'$ is finite dimensional. Then $H$ is free	as	a left (or right) module over $H'$ and in particular the sequence is cleft \cite[Theorem 2.1 (2)]{schneider}. By \cite[Proposition 3.9]{tensor-exact} the exact sequence \eqref{e-hopf} gives rise to an exact sequence of tensor categories
\begin{equation}\label{sec-comod} \CoRep H' \to \CoRep H \to \CoRep H''.
\end{equation}

\medbreak 	
Observe that $H'$ is naturally an algebra in $\CoRep H$.	By a result of \cite{schauenburg-kac}, the tensor category ${}_{H'}(\CoRep H)_{H'}$ is equivalent to the category of comodules over a certain coquasibialgebra $((H')^* \bowtie H'', \varphi)$,  where $\varphi$ is a so-called \emph{Kac 3-cocycle} associated to the exact sequence \eqref{e-hopf} (see \cite[Section 6]{schauenburg-kac}). The coquasibialgebra $((H')^* \bowtie H'', \varphi)$, termed a \emph{generalized product coquasibialgebra} in \cite{schauenburg-kac}, has an exact factorization into its sub-coquasibialgebras $(H')^*$ and $H''$.

\medbreak A generalization of this feature to the context of exact sequences of finite tensor categories appeared in \cite{mn}. Let $(\E): \C' \toto \C \toto \C''$ be an exact sequence of finite tensor categories  and let $(A, \sigma) \in \Z(\C)$ be its induced central algebra. So that $\C'' \cong \C_A$ and the exact sequence $(\E)$ is equivalent to the exact sequence $\langle A \rangle  \toto \C \overset{F_A}\toto \C_A$. See Subsection \ref{pft-exact}.

\medbreak 
Let $_A\C_A$ be the tensor category of $A$-bimodules in $\C$. The category $\C'' \cong \C_A$ is an indecomposable exact $\C$-module category and there are equivalences of tensor categories $$\C_{\C''}^*  \cong {}_A\C_A, \qquad H\modd \cong {}_A(\comod H)_A \cong {}_A{\C'}_{\!\!A},$$
where $H$ is the induced Hopf algebra of $(\E)$.

\medbreak
It was shown in \cite[Proposition 7.3]{mn} that there is an equivalence of $k$-linear categories  \begin{equation}\label{ef-bn} \C_{\C''}^* \cong (H\modd) \boxtimes \C'',\end{equation} thus, in the sense of the formulation explained at the beginning of this section, an exact factorization of $\C_{\C''}^*$.

\medbreak 
More precisely, under the identifications $H\modd \cong {}_A(\comod H)_A \cong {}_A{\C'}_{\!\!A} \subseteq {}_A\C_A$ and $\C'' \cong \C_A \subseteq {}_A\C_A$,  the tensor product functor $\otimes_A :{}_A{\C'\!}_A \boxtimes \C_A \to {}_A\C_A$ induces an equivalence of $k$-linear categories
\begin{equation}\label{f-gral} (H\modd) \boxtimes \C_A \toto {}_A\C_A.
\end{equation} 

Notice that in the situation of the exact sequence \eqref{sec-comod}, there is an equivalence of tensor categories $$(\CoRep H)_{\CoRep H''}^* \cong {}_{H'}(\CoRep H)_{H'}.$$ See \cite[Example 7.4]{mn}.

\section{Examples and open questions}\label{examples-questions}

We begin this section by discussing some examples that answer a number of natural questions regarding the behaviour of exact sequences in relation with known facts about exact sequences of groups and Hopf algebras.

\medbreak Let $\C$ be a finite tensor category. Let also $\C'$ be a tensor subcategory of $\C$ and  $\M$ an indecomposable exact $\C'$-module category. We shall say that $\C'$ is \emph{normal in $\C$ with respect to $\M$} if there exist a tensor category $\C''$ and an exact sequence of tensor categories with respect to $\M$: $$\C' \toto \C \toto \C''\boxtimes \End(\M).$$

A tensor subcategory $\C'$ will be called \emph{normal in $\C$} if it is normal with respect to a rank-one module category, that is, if there exist a tensor category $\C''$ and an exact sequence of tensor categories in the sense of Definition \ref{exact-rk1}: $$\C' \toto \C \toto \C''.$$ 

A tensor category $\C$ will be called \emph{simple} if it has no normal tensor subcategories with respect to a module category. If $\C$ has no normal tensor subcategories it will be called \emph{simple with respect to rank-one module categories}.

\subsection{Fusion subcategories of index 2 need not be normal}\label{indice2}

It is a well-known fact that if $G$ is a finite group, every subgroup of $G$ whose index is the smallest prime number dividing the order of $G$ is normal in $G$. More generally, if $H$ is a semisimple Hopf algebra and $H'$ is a Hopf subalgebra of $H$ such that $\dim H = p \dim H'$, where $p$ is the smallest prime number dividing the dimension of $H$, then $H'$ is normal in $H$ \cite{KM}. The next theorem gives a generalization of this result in the context of fusion categories. 

\medbreak 
Let $F:\C\to \D$ be a dominant tensor functor between finite tensor categories.
The \emph{Frobenius-Perron index of $F$} is defined as the ratio $\FPdim \C/ \FPdim\D$. The Frobenius-Perron index of a dominant tensor functor is an algebraic integer, by \cite[Corollary 8.11]{ENO}. In addition, if $G$ is a left (or right) adjoint of $F$, then the Frobenius-Perron index of $F$ coincides with $\FPdim G(\1)$. See \cite[Section 4]{tensor-exact}.

\begin{theorem}\label{index2}\emph{(\cite[Proposition 4.13]{tensor-exact}, \cite[Theorem 6.2]{indp-exact}).} Let $F: \C \to \D$ be a dominant tensor functor between fusion categories $\C$ and $\D$. Then the following hold:
	
\medbreak (i) If the Frobenius-Perron index of $F$ is 2, then $F$ is normal. 

\medbreak (ii) If $\C$ has integer Frobenius-Perron dimension and the Frobenius-Perron index of $F$ is the smallest prime number dividing $\FPdim \C$, then $F$ is normal.

\medbreak Furthermore, the exact sequences arising from (i) and (ii) are  equivariantization exact sequences. 
\end{theorem}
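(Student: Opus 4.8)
The plan is to reduce the normality of $F$ to a statement about the induced central algebra and then to control its Frobenius--Perron dimension. Since $\C$ and $\D$ are fusion, $F$ is perfect; let $R\colon\D\to\C$ be a right adjoint and put $A=R(\1)$. As recalled in Subsection~\ref{pft-exact}, $(A,\sigma)$ is a commutative connected algebra in $\Z(\C)$, there is a tensor equivalence $\D\cong\C_A$ identifying $F$ with $F_A=-\otimes A$, and $\FPdim A$ equals the Frobenius--Perron index of $F$, so that $\FPdim A=p$. By the normality criterion recalled in Section~\ref{exact-rk1}, $F$ is normal if and only if $R(\1)=A$ lies in $\KER_F$, which under the identification $\D\cong\C_A$ means exactly that $A\otimes A$ is a trivial object of $\C_A$ (self-trivialization), since $F_A(A)=A\otimes A$. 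Alternatively, as $F$ is dominant with kernel $\KER_F$, by \cite[Theorem~3.6]{eg-emc} normality (equivalently, exactness of $\KER_F\toto\C\overset{F}{\toto}\D$) is equivalent to the numerical identity $\FPdim\KER_F=\FPdim\C/\FPdim\D=p$. I will use whichever formulation is more convenient.

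The first key step is the a priori bound $\FPdim\KER_F\le p$. For a simple object $X\in\KER_F$ one has $F(X)\cong\1^{\oplus\FPdim X}$, so $\dim\Hom_\D(\1,F(X))=\FPdim X$; combining the adjunction with the projection formula $RF(X)\cong X\otimes R(\1)=X\otimes A$ gives $\dim\Hom_\C(\1,X\otimes A)=\FPdim X$, i.e. the multiplicity $[A:X^*]$ of $X^*$ in $A$ equals $\FPdim X=\FPdim X^*$. Since $\KER_F$ is closed under duals, summing over $\Irr(\KER_F)$ yields $\FPdim\KER_F=\sum_{X\in\Irr(\KER_F)}(\FPdim X)^2\le\sum_{Y\in\Irr(\C)}[A:Y]\,\FPdim Y=\FPdim A=p$, with equality if and only if $A$ is the regular object $\bigoplus_{X\in\Irr(\KER_F)}(\FPdim X)\,X$ of $\KER_F$.

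For part (i), where $p=2$, the structure of $A$ is forced: since $\Hom_\C(\1,A)\cong k$ and $\FPdim A=2$, the complement of the unit has Frobenius--Perron dimension $1$ and hence $A\cong\1\oplus g$ for a single invertible object $g$; as $\C_A\cong\D$ is fusion, $A$ is separable, and nondegeneracy of the multiplication forces $g\otimes g\cong\1$. Then $A\otimes A\cong A\oplus(g\otimes A)$ as free $A$-modules, and left multiplication by $g$ furnishes an isomorphism $g\otimes A\cong A$ of right $A$-modules (its square being multiplication by $g\otimes g\cong\1$); thus $A\otimes A\cong A\oplus A$ is trivial in $\C_A$ and $F$ is normal, with no integrality hypothesis needed. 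For part (ii), $\FPdim\C$ is an integer, so $\FPdim\KER_F$ is a positive integer dividing $\FPdim\C$ (being the dimension of a fusion subcategory, \cite{ENO}); being $\le p$ and dividing a number whose smallest prime factor is $p$, it must lie in $\{1,p\}$. The crux is to exclude the value $1$, equivalently to show that $A$ is the full regular object of $\langle A\rangle=\KER_F$. I expect this to be the main obstacle, to be settled by the categorical analogue of the classical ``action on the $p$ cosets'' argument used for groups and Hopf algebras \cite{KM}: the connectedness and commutativity of $A$, together with $\FPdim A=p$ and the minimality of $p$, are used to rule out any intermediate kernel, forcing $\FPdim\KER_F=p$ and hence normality.

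Finally, once normality is established the exact sequence reads $\KER_F\toto\C\overset{F}{\toto}\D$ with $\FPdim\KER_F=p$, and the kernel carries the fiber functor $\omega=\Hom(\1,Ff)$. A fusion category of prime Frobenius--Perron dimension is pointed, and admitting a fiber functor it is equivalent to $\Rep(\Zz/p)$, which is Tannakian; moreover the simple summands of $A$ endow this subcategory with a central embedding $\KER_F\hookrightarrow\Z(\C)$, since $A\in\Z(\C)$. De-equivariantizing $\C$ along this Tannakian subcategory (cf. the discussion of equivariantization exact sequences in Section~\ref{exact-rk1}) produces an action of $\Zz/p$ on $\D$ with $\C\cong\D^{\Zz/p}$ and $F$ the forgetful functor; equivalently the induced Hopf algebra is $H\cong k^{\Zz/p}$. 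This exhibits the sequence as an equivariantization exact sequence, as claimed.
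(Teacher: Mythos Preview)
The paper itself does not prove this result; it is quoted without proof from \cite[Proposition 4.13]{tensor-exact} and \cite[Theorem 6.2]{indp-exact}. Your argument for (i) is essentially correct: $\FPdim A=2$ together with $\Hom_\C(\1,A)\cong k$ force $A\cong\1\oplus g$ with $g$ invertible, and self-duality of the Frobenius algebra $A$ gives $g^*\cong g$, hence $g\otimes g\cong\1$; then $F(g)$ is invertible in $\D$ with $\Hom_\D(F(g),\1)\cong\Hom_\C(g,A)\neq0$, so $F(g)\cong\1$, $g\in\KER_F$, and $\FPdim\KER_F=2$.

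For (ii) there is a genuine gap, which you yourself flag: having reduced to $\FPdim\KER_F\in\{1,p\}$, you do not exclude the value $1$ but only gesture at a ``categorical analogue of the action on the $p$ cosets.'' That analogy does not categorify in any evident way---the argument of \cite{KM} rests on an honest $p$-dimensional linear representation of $H$, and there is no categorical substitute for that object here. Nothing in your write-up rules out, say, $A\cong\1\oplus X$ with $X$ simple of Frobenius--Perron dimension $p-1$ and $F(X)$ non-trivial; your multiplicity bound $[A:X]=1<\FPdim X$ is then consistent with $\KER_F\cong\vect$. The proof in \cite{indp-exact} proceeds differently, analysing the connected \'etale algebra $(A,\sigma)$ inside the braided category $\Z(\C)$ and the fusion subcategory it generates there. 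A related gap affects your equivariantization claim: you assert that the half-braiding on $A$ restricts to half-braidings on its invertible summands, so that $\KER_F$ lifts to $\Z(\C)$, but the isomorphism $\sigma_X\colon A\otimes X\to X\otimes A$ need not be block-diagonal with respect to the decomposition $A=\bigoplus_g g$. Establishing centrality of the exact sequence---equivalently, that the subcategory of $\Z(\C)$ generated by $(A,\sigma)$ is Tannakian and maps isomorphically onto $\KER_F$---is precisely the content supplied by \cite{indp-exact}, and it is not a formality.
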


Let $\C$ be a finite tensor category and let $\C' \subseteq \C$ be a tensor subcategory. The ratio $\FPdim \C/\FPdim \C'$ will be called the \emph{index of $\C'$} (in $\C$). This is also an algebraic integer \cite{ENO}.

\medbreak  
The dual statement of Theorem \ref{index2} is not true, that is, there exist fusion subcategories of index 2 which are not normal. These examples are Tambara-Yamagami fusion categories $\mathcal{TY}(\mathbb Z_p, \chi, \tau)$ of  Frobenius-Perron dimension $2p$, where $p$ is a prime number.  In this case the pointed subcategory of $\mathcal{TY}(\mathbb Z_p, \chi, \tau)$, which is the unique fusion subcategory of Frobenius-Perron dimension $p$, is not normal \cite[Proposition 6.3]{indp-exact}. 

\medbreak 
In fact, Tambara-Yamagami categories $\mathcal{TY}(\mathbb Z_p, \chi, \tau)$ provide examples of fusion categories of dimension $2p$ which are simple. 
The next proposition generalizes \cite[Proposition 6.5]{indp-exact} to the context of exact sequences with respect to a module category.

\begin{proposition}\label{ty-simple} The fusion category $\mathcal{TY}(\mathbb Z_p, \chi, \tau)$ is simple. 
\end{proposition}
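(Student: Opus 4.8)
The goal is to show that the Tambara--Yamagami category $\TY(\Zz_p, \chi, \tau)$ has no normal tensor subcategories with respect to \emph{any} indecomposable exact module category. I will argue by examining the possible proper tensor subcategories together with the Frobenius--Perron dimension constraint imposed by exactness. Recall that $\FPdim \TY(\Zz_p, \chi, \tau) = 2p$ and that, up to the trivial subcategory $\langle \1 \rangle$ (of dimension $1$) and the whole category (of dimension $2p$), the only proper tensor subcategory is the pointed subcategory $\mathcal{P} = \vect_{\Zz_p}$, of $\FPdim = p$. This follows from the fusion rules: the unique noninvertible simple object $m$ satisfies $m \otimes m \cong \bigoplus_{g \in \Zz_p} g$, so any tensor subcategory containing $m$ must contain all the invertibles and hence be the whole category.

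The key quantitative input is the multiplicativity of Frobenius--Perron dimension in an exact sequence with respect to a module category: by \cite[Theorem 3.6]{eg-emc}, if $\C' \toto \TY(\Zz_p, \chi, \tau) \toto \C'' \boxtimes \End(\M)$ is exact with respect to $\M$, then
$$\FPdim \TY(\Zz_p, \chi, \tau) = \FPdim \C' \, \FPdim \C''.$$
Since $\C'$ is required to be a proper tensor subcategory realizing normality, the only candidate with $1 < \FPdim \C' < 2p$ is the pointed subcategory $\mathcal{P}$, forcing $\FPdim \C' = p$ and hence $\FPdim \C'' = 2$. So the entire problem reduces to ruling out a single potential exact sequence with $\C' = \mathcal{P}$ and $\FPdim \C'' = 2$, with respect to some indecomposable exact $\mathcal{P}$-module category $\M$. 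A fusion category of dimension $2$ is necessarily pointed (it is $\vect_{\Zz_2}$, possibly with a nontrivial associator), so $\C''$ is pointed.

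The plan for excluding this sequence is to invoke the self-duality of exact sequences together with the pointedness obstruction of Lemma \ref{quotient-pt}. First I would pass to the dual formulation: an exact sequence $\mathcal{P} \toto \TY(\Zz_p, \chi, \tau) \toto \C'' \boxtimes \End(\M)$ with respect to $\M$ induces, via \eqref{exact-n}, an exact sequence of the dual categories, and correspondingly an exact factorization of $\TY(\Zz_p, \chi, \tau)^*_{\C'' \boxtimes \M}$ as in Subsection \ref{pft-exact}, of the form $\TY(\Zz_p, \chi, \tau)^*_{\C'' \boxtimes \M} = \C'' \bullet \mathcal{P}^*_\M$. I would then use the fact that $\TY(\Zz_p, \chi, \tau)$ is not pointed (it has the noninvertible object $m$) together with Lemma \ref{quotient-pt}, which says that in an exact sequence with pointed total category the quotient is pointed. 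Reversing the roles — applying the lemma to the dual sequence or to the relevant factorization — yields a contradiction: the pointedness forced on $\C''$ cannot be reconciled with the presence of the noninvertible object $m$ in the total category, since the exact factorization would express the simple object $m$ as $X \otimes Y$ with $X$ invertible (in $\C''$) and $Y$ simple, forcing $m$ to have the same dimension as $Y$ and propagating pointedness up the sequence.

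The main obstacle I anticipate is the bookkeeping of the module-category indices and the correct identification of the dual categories $\C''$, $\mathcal{P}^*_\M$ and $\TY(\Zz_p, \chi, \tau)^*_{\C'' \boxtimes \M}$: one must verify that $\M$ is a genuine indecomposable exact $\mathcal{P}$-module category (so $\M$ corresponds to a subgroup of $\Zz_p$ with a cocycle, hence $\M = \vect$ or $\M = \mathcal{P}$ itself, since $\Zz_p$ has no proper nontrivial subgroups), and to track how the half-braiding data survive dualization. The cleanest route is probably to dispense with $\M$ nontrivial entirely: since $\Zz_p$ is simple, the only indecomposable exact $\mathcal{P}$-module categories are $\vect$ (the rank-one case, already handled in \cite[Proposition 6.5]{indp-exact}) and the regular one $\mathcal{P}$ itself, and in the latter case $\End(\M) \cong \mathcal{P}^{\rev} \boxtimes \mathcal{P}$ is already of dimension $p^2$, which makes $\C'' \boxtimes \End(\M)$ too large to receive a dominant functor from the dimension-$2p$ category $\TY(\Zz_p, \chi, \tau)$ compatibly with the dimension count. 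Thus the dimension constraint again collapses the problem to the rank-one case, and simplicity with respect to arbitrary module categories follows from simplicity with respect to rank-one module categories, already established.
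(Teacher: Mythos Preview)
Your reduction to $\C' = \vect_{\Zz_p}$ and $\FPdim \C'' = 2$ is fine, and so is the classification of indecomposable $\vect_{\Zz_p}$-module categories into the rank-one $\vect$ and the regular $\vect_{\Zz_p}$. The gap is in how you dispose of the regular case. The claim that $\C'' \boxtimes \End(\M)$ is ``too large to receive a dominant functor'' from a category of dimension $2p$ is not valid: the target $\C'' \boxtimes \End(\M)$ is only a \emph{multifusion} category, and dominance of a tensor functor into a multifusion category imposes no inequality of Frobenius--Perron dimensions. Indeed, the very definition of exact sequence with respect to $\M$ is designed so that the Deligne product $\C' \boxtimes \C''$ fits into such a sequence for \emph{every} indecomposable $\C'$-module $\M$; taking $\C' = \vect_{\Zz_p}$, $\C'' = \vect$, $\M = \vect_{\Zz_p}$ already gives a dominant functor $\vect_{\Zz_p} \to \End(\vect_{\Zz_p})$ whose target has dimension $p^2 > p$. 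So the regular case is not excluded by size, and your argument does not close.

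Your earlier sketch via duality and Lemma~\ref{quotient-pt} is also not quite on target: the exact factorization $\C^*_{\C'' \boxtimes \M} = \C'' \bullet (\C')^*_\M$ takes place in the \emph{dual} category, so the noninvertible object $m$ of $\TY(\Zz_p,\chi,\tau)$ is not being expressed as a product there. What the paper does instead is short and uniform in $\M$: since $\FPdim \C''$ and $\FPdim (\C')^*_\M$ are both prime, both factors are pointed, hence so is $\C^*_{\C'' \boxtimes \M}$; thus $\TY(\Zz_p,\chi,\tau)$ is Morita equivalent to a pointed category, i.e.\ group-theoretical, hence integral, contradicting $\FPdim m = \sqrt{p}$. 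That single Morita/integrality step replaces your case split on $\M$ and avoids the faulty dimension argument.
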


\begin{proof} Let $\C = \mathcal{TY}(\mathbb Z_p, \chi, \tau)$. Suppose on the contrary that there is  an exact sequence $\C' \toto \C \toto \C'' \boxtimes \End(\M)$, for some proper fusion subcategory $\C'$ and some indecomposable $\C'$-module category $\M$.  
Then $\FPdim \C = \FPdim \C' \FPdim \C''$ and therefore $\FPdim\C, \FPdim \C' \in \{2, p\}$. 

\medbreak 
Consider the associated exact factorization $\C^*_{\C'' \boxtimes \M} = \C'' \bullet (\C')^*_\M$. Since every fusion category of prime Frobenius-Perron dimension is pointed \cite{ENO}, then $\C''$, $(\C')^*_\M$ are pointed and therefore so is $\C^*_{\C'' \boxtimes \M}$. Thus $\C$ is group-theoretical. This is impossible because $\C$ is not integral (in fact the unique non-invertible simple object of $\C$ has Frobenius-Perron dimension $\sqrt{p}$). This contradiction shows that such an exact sequence cannot exist and therefore $\C$ is simple, as claimed.
\end{proof}

\subsection{Further examples of simple fusion categories of dimension $p^aq^b$}\label{simple-paqb}

Proposition \ref{ty-simple} implies that Burnside's $p^aq^b$-theorem does not extend to fusion categories in terms of exact sequences with respect to module categories. Further examples of this situation are provided by the non-group-theoretical fusion categories constructed in \cite{jl}. 

\medbreak 
Let $p < q$ be prime numbers such that $p$ is odd and divides $q + 1$. Let also $\zeta_1 \neq  \zeta_2 \in \mathbb F_{q^2}$ such that $\zeta_1^p = \zeta_2^p = 1$ but $\zeta_1\zeta_2 \neq 1$, and let $\xi \in H^3(\mathbb Z_p, k^\times) \cong \mathbb Z_p$. Consider the non-group-theoretical fusion category $\C(p, q, \{\zeta_1, \zeta_2\}, \xi)$ 
constructed in \cite{jl}. The fusion categories $\C(p, q, \{\zeta_1, \zeta_2\}, \xi)$ are $\Zz_p$-extensions of $\vect_{\Zz_q\times \Zz_q}$ and they fall into $(p^2 - p)/2$ equivalence classes. 

\medbreak 
We have $\FPdim \C(p, q, \{\zeta_1, \zeta_2\}, \xi) = pq^2$.

\begin{proposition} The fusion category $\C(p, q, \{\zeta_1, \zeta_2\}, \xi)$ is simple. 
\end{proposition}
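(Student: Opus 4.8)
The plan is to argue exactly as in the proof of Proposition \ref{ty-simple}, using the numerical constraints forced by an exact sequence together with the fact that $\C(p, q, \{\zeta_1, \zeta_2\}, \xi)$ is \emph{not} group-theoretical. Recall that $\FPdim \C(p, q, \{\zeta_1, \zeta_2\}, \xi) = pq^2$, and that this category is known to be non-group-theoretical by construction in \cite{jl}.

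First I would suppose, toward a contradiction, that there is an exact sequence with respect to some module category $\M$, say $\C' \toto \C \toto \C'' \boxtimes \End(\M)$, with $\C = \C(p, q, \{\zeta_1, \zeta_2\}, \xi)$ and with $\C'$ a proper fusion subcategory, so that $\FPdim \C', \FPdim \C'' > 1$. Then by the multiplicativity of Frobenius-Perron dimension in an exact sequence, $\FPdim \C = \FPdim \C' \, \FPdim \C''$, and each of $\FPdim \C'$, $\FPdim \C''$ is a proper divisor of $pq^2$ bigger than $1$, hence lies in $\{p, q, q^2, pq\}$. The next step is to pass to the associated exact factorization of the dual category: as recorded in Subsection \ref{pft-exact} and in the discussion of exact factorizations, the exact sequence induces an exact factorization $\C^*_{\C'' \boxtimes \M} = \C'' \bullet (\C')^*_\M$.

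The key point is then to show that each factor of this exact factorization is pointed, whence the whole category $\C^*_{\C'' \boxtimes \M}$ is pointed and therefore $\C$ is group-theoretical, contradicting the fact that $\C(p, q, \{\zeta_1, \zeta_2\}, \xi)$ is non-group-theoretical. Unlike the Tambara-Yamagami case, here a factor may have dimension $q^2$ or $pq$, so it is not immediate that every factor is pointed. I would therefore invoke the structural information about $\C$: since $\C$ is a $\Zz_p$-extension of $\vect_{\Zz_q \times \Zz_q}$, its fusion subcategories and the possible factors are heavily constrained, and in fact a fusion category of dimension $q^2$ arising here must be pointed (being contained in, or Morita dual to, the pointed part $\vect_{\Zz_q\times\Zz_q}$), as must a fusion category of dimension $p$ (prime dimension forces pointedness by \cite{ENO}) and of dimension $q$. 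The factor of dimension $pq$, if it occurs, I would analyze by combining the $\Zz_p$-grading with the fact that fusion categories of dimension $pq$ are group-theoretical, hence pointed after passing to a dual, which is enough to conclude group-theoreticity of $\C$.

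The main obstacle I expect is precisely ruling out a non-pointed factor of composite dimension $q^2$ or $pq$: the clean prime-dimension argument of Proposition \ref{ty-simple} does not directly apply, so I would need to use the explicit description of $\C(p, q, \{\zeta_1, \zeta_2\}, \xi)$ as a $\Zz_p$-extension of $\vect_{\Zz_q \times \Zz_q}$ to pin down the fusion subcategories and their module-category duals. The conceptual heart of the argument is unchanged, however: any honest exact sequence would force $\C$ to be built from pointed pieces via an exact factorization, hence group-theoretical, and this contradicts the defining non-group-theoretical nature of these examples. Thus no such exact sequence exists and $\C(p, q, \{\zeta_1, \zeta_2\}, \xi)$ is simple.
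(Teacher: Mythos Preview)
Your overall strategy is exactly the one in the paper: assume an exact sequence exists, pass to the exact factorization $\C^*_{\C''\boxtimes\M}=\C''\bullet(\C')^*_\M$, show both factors are pointed, and contradict non-group-theoreticity. The structure is right; the gap is in how you handle the cases $q^2$ and $pq$.

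For dimension $q^2$ your justification is off. You argue that such a factor ``must be contained in, or Morita dual to, the pointed part $\vect_{\Zz_q\times\Zz_q}$''. But $\C''$ is not a subcategory of $\C$ at all, so no containment argument applies to it, and for $(\C')^*_\M$ being Morita dual to something pointed does not by itself force pointedness. The paper bypasses all of this: it simply invokes \cite[Corollaries 8.30 and 8.31]{ENO}, which say that \emph{every} fusion category of Frobenius--Perron dimension $p$, $q$, or $q^2$ is pointed. No structural information about $\C$ is needed here.

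The real issue is the $pq$ case. Your proposed route---``fusion categories of dimension $pq$ are group-theoretical, hence pointed after passing to a dual''---does not go through: being group-theoretical means Morita equivalent to a pointed category, not that the given factor in the exact factorization is pointed, and an exact factorization with merely group-theoretical factors is not known to force group-theoreticity of the product. What the paper actually uses is an arithmetic observation specific to these examples: since $p$ is odd and $p\mid q+1$, one has $p\nmid q-1$ (otherwise $p\mid 2$). Then the classification of fusion categories of dimension $pq$ in \cite[Theorem 6.3]{ego} shows that under this divisibility constraint every such category is pointed. This is the missing ingredient in your proposal; once you have it, the argument closes exactly as you outlined.
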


\begin{proof} As in the proof of Proposition \ref{ty-simple}, let us assume on the contrary that there is  an exact sequence $\C' \toto \C \toto \C'' \boxtimes \End(\M)$, for some proper fusion subcategory $\C'$ and some indecomposable $\C'$-module category $\M$.  
So that $\FPdim \C''$ can be either $p$, $q$, $q^2$ or $pq$ and similarly for $\FPdim \C'$. Therefore both $\C''$ and $(\C')^*_\M$ are pointed: this follows from \cite[Corollaries 8.30 and 8.31]{ENO} if the Frobenius-Perron dimensions are $p$, $q$ or $q^2$, and by the classification of fusion categories of dimension $pq$ in \cite[Theorem 6.3]{ego}, since $p$ is odd (and thus it cannot divide $q-1$). 

\medbreak 
This implies that  $\C^*_{\C'' \boxtimes \M} = \C'' \bullet (\C')^*_\M$ is pointed and then $\C$ is group-theoretical. This contradicts the choice of $\C$ and shows that such an exact sequence cannot exist. Thus $\C$ is simple, as claimed.
\end{proof}

\subsection{Group-theoretical fusion categories and exact sequences}
As mentioned before, the category of representations of a finite simple group is a simple fusion category. 

Notice, however, that there exist simple Hopf algebras $H$ such that the tensor category $H\modd$ is not simple; see Subsection \ref{subs-simple}.

\medbreak 
Recall that a fusion category $\C$ is called {\it group-theoretical} if it is
categorically Morita equivalent to a pointed fusion category.  
Let $\C$ be a pointed fusion category, so that there exist a finite group $G$
and a
3-cocycle $\omega:G \times G \times G \to k^{\times}$ such that $\C$ is equivalent to the category $\vect_G^\omega$ of $G$-graded vector spaces with associativity determined by $\omega$.

\medbreak
Every indecomposable module category over $\vect_G^\omega$ arises from a pair
$(\Gamma, \alpha)$, where $\Gamma$ is a subgroup
of $G$ and $\alpha: \Gamma \times \Gamma \to k^{\times}$ is a 2-cochain on $\Gamma$ such that $d\alpha=\omega|_{\Gamma\times \Gamma\times \Gamma}$. Thus, the restriction
$\omega\vert_\Gamma$ represents the trivial cohomology class in $H^3(\Gamma, k^\times)$. Given such a pair $(\Gamma, \alpha)$, the twisted group algebra $A(\Gamma, \alpha) = k_\alpha \Gamma$ is an indecomposable algebra in $\vect_G^\omega$.
The (left) module category associated to such pair $(\Gamma, \alpha)$ is the category
$$\M(\Gamma, \alpha) = (\vect_G^\omega)_{A(\Gamma, \alpha)}$$ of (right)  $A(\Gamma, \alpha)$-modules in $\vect_G^\omega$. 

\medbreak 
The group-theoretical category $(\vect_G^\omega)^*_{\M_0(\Gamma, \alpha)}$ is
denoted $\C(G, \omega, \Gamma, \alpha)$. 

\medbreak Let $G$ be a finite group and let $\Gamma$ be a subgroup of $G$. %Then the equivalence $\Rep \Gamma \cong {}_\Gamma(\vect_\Gamma)_\Gamma$ %induces  
There is a canonical embedding of tensor categories $\Rep \Gamma \toto \C(G, \omega, \Gamma, \alpha)$.
The next proposition implies that $\Rep \Gamma$ is not necessarily a normal tensor subcategory of $\C(G, \omega, \Gamma, \alpha)$.

\medbreak 
Let $G$ be the alternating group $\Aa_6$ of order $60$ and let $\Gamma$ be a subgroup of $G$ such that $G \cong \Aa_5$. Then  $\Gamma$ is a maximal subgroup of $\Aa_6$. There are 12 such subgroups and they constitute 2 conjugacy classes, represented by the subgroups $\langle (12345), (123)\rangle$ and $\langle (1,2,3,4,5), (1,4)(5,6) \rangle$. 	

\medbreak 	
Let $\C = \C(\Aa_6, 1, \Gamma, 1)$, so that $\C$ is a group-theoretical fusion category of dimension $360$ which is categorically Morita equivalent to $\vect_{\Aa_6}$. In addition, $\C$ contains a fusion subcategory $\C'\cong \Rep \Aa_5$.

\medbreak 	
Observe that $\C$ is not a graded extension of any fusion category: this follows from the characterization of graded extensions in  \cite[Proposition 2.9]{ENO2}, since $\Rep \Aa_6$ is the unique Tannakian subcategory of $\Z(\C)$ (c.f. \cite[Example 3.3]{core-wgt}) and $\C$ cannot be an $\Aa_6$-graded extension of a fusion subcategory, since it is not pointed.

\medbreak 	
We have $\widehat \Gamma = 1$ and $\Gamma = N_{\Aa_6}(\Gamma)$ (by maximality of $\Gamma$). Hence, by \cite[Theorem 5.2]{gel-nai}, the group of invertible objects of $\C$ is trivial.	

\begin{proposition}\label{a5-notnormal} The category $\C(\Aa_6, 1, \Gamma, 1)$ is simple with respect to rank-one module categories. 	
\end{proposition}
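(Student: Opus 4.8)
The plan is to argue by contradiction. Suppose $\C = \C(\Aa_6, 1, \Gamma, 1)$ fits into an exact sequence of tensor categories $\C' \overset{i}\toto \C \overset{F}\toto \C''$ in the sense of Definition \ref{exact-rk1}, with $\FPdim \C', \FPdim \C'' > 1$. Since $\C$ is fusion, so are $\C'$ and $\C''$, and $\FPdim \C' \, \FPdim \C'' = \FPdim \C = 360$. The kernel carries the fiber functor $\omega = \Hom(\1, Ff): \C' \to \vect$, whence $\C' \cong \comod H$ for the induced semisimple Hopf algebra $H = \coend(\omega)$; I would also fix the induced central algebra $(A, \sigma) \in \Z(\C)$ of $F$, a connected commutative algebra with $\langle A\rangle = \C'$ and $\C'' \cong \C_A$. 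All of the analysis would take place inside the center $\Z(\C) \cong \Z(\vect_{\Aa_6}) = \Rep D(\Aa_6)$, using that $\C$ is group-theoretical.

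The first step is to exploit that the group $\operatorname{Inv}(\C)$ of invertible objects is trivial. The full embedding $i: \C' \hookrightarrow \C$ is a tensor functor and so carries invertible objects to invertible objects; as the invertible objects of $\C' \cong \comod H$ are exactly the group-likes of $H$, this forces $G(H) = \operatorname{Inv}(\C') \hookrightarrow \operatorname{Inv}(\C) = 1$, i.e. $G(H) = 1$. Thus the kernel is a fusion category with a fiber functor but no nontrivial invertible objects, which rules out all small and all pointed kernels; combined with $\FPdim \C' \mid 360$ and known facts about semisimple Hopf algebras of small dimension, this should reduce the admissible pairs $(\FPdim \C', \FPdim \C'')$ to a short list whose principal member is $\C' \cong \Rep \Aa_5$ (so $H = k^{\Aa_5}$ and $\FPdim \C'' = 6$).

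Next I would dispose of the equivariantization cases. By Theorem \ref{index2}, if the Frobenius-Perron index $\FPdim \C'$ of $F$ equals $2$ the sequence is an equivariantization exact sequence; more intrinsically, the sequence is an equivariantization precisely when the embedding of $\C'$ is central, i.e. when it is induced by a Tannakian subcategory $\Rep L \subseteq \Z(\C)$ mapping faithfully onto $\C'$. Since $\Rep \Aa_6$ is the \emph{unique} Tannakian subcategory of $\Z(\C)$ and $\Aa_6$ is simple, the only such $L$ is $\Aa_6$ itself, whence $\FPdim \C' = |\Aa_6| = 360$ and $\FPdim \C'' = 1$, a contradiction. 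Consequently the surviving kernel $\C' \cong \Rep \Aa_5$ is \emph{not} central and the sequence, while abelian (as $H = k^{\Aa_5}$), is not an equivariantization; by \cite[Theorem 1.1]{char-crossed-action} this would make $\C$ an $(\Aa_5, \Lambda)$-crossed extension $(\C'')^{(\Aa_5, \Lambda)}$ for a suitable matched pair $(\Aa_5, \Lambda)$, and here one would invoke that $\C$ is \emph{not} a graded extension of any fusion category, that $\operatorname{Inv}(\C) = 1$, and that $\Gamma = \Aa_5$ is maximal and self-normalizing in $\Aa_6$ with $\widehat{\Gamma} = 1$, to show that no such crossed structure exists.

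The step I expect to be the main obstacle is this last one: ruling out a \emph{genuinely crossed} exact sequence, one that is neither an equivariantization nor a graded extension. Concretely, this amounts to classifying the connected \'etale algebras $(A, \sigma)$ in $\Rep D(\Aa_6)$ that can arise as the induced central algebra of a rank-one exact sequence — equivalently, those for which $\langle A\rangle$ admits a fiber functor — and showing that the combinatorics of the pair $\Aa_5 < \Aa_6$ (maximality, self-normalization, perfectness) leaves no room for a proper such algebra beyond the one giving $\Rep \Aa_6$. This is precisely the gap between the two clean facts recorded just above the statement, namely the uniqueness of the Tannakian subcategory of $\Z(\C)$ and the triviality of $\operatorname{Inv}(\C)$; once it is closed, the remaining argument is the bookkeeping of Frobenius-Perron dimensions and the reductions described above.
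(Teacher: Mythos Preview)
Your outline is correct in spirit, but the proposal has a genuine gap at exactly the place you flag as the main obstacle, and the route you suggest to close it (classifying connected \'etale algebras in $\Rep D(\Aa_6)$) is not how the paper proceeds.

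First, on the reduction to $\KER_F \cong \Rep\Aa_5$: the paper does not go through a classification of small semisimple Hopf algebras with trivial group-likes. Instead it restricts $F$ to the subcategory $\Rep\Gamma \cong \Rep\Aa_5 \subseteq \C$ already sitting inside $\C$. Normality of $F$ restricts, and simplicity of $\Rep\Aa_5$ forces either $\Rep\Aa_5 \cap \KER_F \cong \vect$ or $\Rep\Aa_5 \subseteq \KER_F$. The first option makes $F\vert_{\Rep\Aa_5}$ a full embedding, so $60 \mid \FPdim\C''$ and $\FPdim\KER_F \in \{2,3,6\}$, contradicting $\operatorname{Inv}(\C)=1$. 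In the second option, maximality of $\Gamma$ in $\Aa_6$ together with the double-coset grading of $\operatorname{Gr}(\C)$ forces $\KER_F = \Rep\Aa_5$ on the nose. This is sharper than your reduction and needs no external Hopf-algebra classification.

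Second, and this is the real gap: once one has $\Rep\Aa_5 \to \C \to \D$ with $\FPdim\D = 6$, the paper does \emph{not} analyze crossed actions or \'etale algebras. The tool is the exact factorization of the dual. One has $\C^*_\D \cong {}_A\C_A \cong \C(\Aa_6, 1, T, \psi)$ for some subgroup $T \subseteq \Aa_6$ and $2$-cocycle $\psi$, together with a factorization ${}_A\C_A = (H\modd) \bullet \D$. In the \emph{abelian} case ($H \cong k^{\Aa_5}$, so $H\modd \cong \vect_{\Aa_5}$): if $\D$ is pointed then ${}_A\C_A$ is pointed, giving an exact factorization of a group Morita equivalent to $\Aa_6$, hence of $\Aa_6$ itself by Naidu's criterion, contradicting Miller's theorem that $\Aa_6$ has no exact factorization into proper subgroups; if $\D$ has type $(1,2;2,1)$, the Gelaki--Naidu formula bounds $|\operatorname{Inv}(\C(\Aa_6,1,T,\psi))|$ by $|N_{\Aa_6}(T)/T|\,|\widehat T| \le 36$, while the factorization forces $120$ invertibles. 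In the \emph{non-abelian} case ($H \cong (k\Aa_5)^J$, so $H\modd$ has type $(1,12;4,3)$): the two possible types of ${}_A\C_A$ are listed explicitly, and each is ruled out either by the same invertible-count bound or by the divisibility constraint $\FPdim X \mid |T|$ for simple $X$ in $\C(\Aa_6,1,T,\psi)$.

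So the missing ingredient is the passage to the Morita dual via the exact factorization ${}_A\C_A = (H\modd)\bullet\D$, combined with the explicit subgroup data for $\Aa_6$ (Table~\ref{sps-a6}) and the Gelaki--Naidu formula for invertible objects of a group-theoretical category. Your proposed \'etale-algebra classification would in principle work but is a much heavier computation than what is actually needed.
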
	

\begin{proof}
Suppose that $F: \C \to \D$ is a normal dominant tensor functor such that $F$ is not an equivalence. Then the restriction of $F$ to $\C' \cong  \Rep \Aa_5$ is also a normal tensor functor with kernel $\KER_F \cap \C'$. Therefore either $\C' \cap \KER_F \cong \vect$ or $\C' \subseteq \KER_F$. Suppose first that $\C' \cap \KER_F \cong \vect$. Then the restriction of $F$ to $\C'$ is a full embedding, whence the dimension of $\D$ is divisible by $60$. Then the dimension of $\KER_F$ can equal $6$, $2$ or $3$. But this contradicts the fact that $\C$ has no nontrivial invertible objects. Hence $\C' \subseteq \KER_F$. 
	
The Grothendieck ring $\operatorname{Gr} (\C)$ of $\C$ is faithfully graded by the double coset ring $\Gamma \backslash \Aa_6 / \Gamma$. In addition the fusion subcategory $\KER_F$ determines a based subring of $\operatorname{Gr} (\C)$. Since $\C' \subseteq \KER_F$, this based subring corresponds to a subgroup of $\Aa_6$ containing $\Gamma$, see  \cite[Theorem 4.1 and Lemma 3.6]{gel-nai}. The maximality of $\Gamma$ implies that $\C' = \KER_F$. Then $\C$ fits into an exact sequence $\C' \toto \C \toto \D$, where $\D$ is a fusion category of dimension $6$. Moreover, by \cite[Proposition 4.9]{tensor-exact}, $\D$ is integral.
	
	\medbreak
	Assume that the exact sequence $\C' \toto \C \toto \D$ is \emph{abelian}. In other words the sequence is exact with respect to the trivial $\C'$-module category $\vect$ in the sense of \cite{eg-emc}. Let $(A, \sigma) \in \Z(\C)$ be the induced central algebra of $F$.
	Then ${}_A\C_A$ has a factorization into fusion subcategories equivalent, respectively, to $\vect_{\Gamma}$ and $\D$. If $\D$ is pointed of dimension $6$, then $\D \cong \vect^\omega_{S}$, where $S$ is a group of order $6$. Hence, ${}_A\C_A \cong \vect^{\omega'}_L$, where $L$ is a group of order $360$ that has an exact factorization $L = \Gamma . H$ \cite[Proposition 7.3]{mn}. In particular the pointed fusion category $\vect^{\omega'}_L$ is categorically Morita equivalent to $\vect_{\Aa_6}$. By \cite[Theorem 5.8]{naidu}, this implies that $L \cong \Aa_6$. This leads to a contradiction, because the group $\Aa_6$ admits no exact factorization into proper subgroups \cite{miller}.
	
	\medbreak
	If, on the other hand, $\D$ is not pointed, then $\D$ must contain two distinct invertible objects and a simple object of dimension $2$. Then the group of invertible objects of ${}_A\C_A$ is of order $120$. Since ${}_A\C_A$ is Morita equivalent to $\vect_{\Aa_6}$, then there exists a subgroup $T$ of $\Aa_6$ and a 2-cocycle $\psi$ on $T$ such that ${}_A\C_A \cong \C(\Aa_6, 1, T, \psi)$. By \cite[Theorem 5.2]{gel-nai}, the group of invertible objects of ${}_A\C_A$ has order $|K| |\widehat T|$, where $K$ is a certain subgroup of $N_{\Aa_6}(T)/T$. A direct inspection on the possible subgroups $T$ of $\Aa_6$ (see Table \ref{sps-a6}) shows that $|N_{\Aa_6}(T)/T| |\widehat T| \leq 36$, which is again a contradiction.
	
	\medbreak 
	We have thus shown that the group-theoretical fusion category $\C = \C(\Aa_6, 1, G, 1)$ does not fit into any \emph{abelian} exact sequence of tensor categories.
	
	\medbreak Therefore, if $\C$ is not simple, then $\C$ fits into a non-abelian exact sequence 
	\begin{equation}\label{ex-a6}\C' \toto \C \toto \D, 
	\end{equation} 
	where $\C' \cong \Rep \Aa_5$ and $\FPdim \D = 6$. By the previous part, the induced Hopf algebra $H$ of \eqref{ex-a6} is not commutative. Hence $H \cong (k\Aa_5)^J$, where the twist $J$ is not trivial. Then $H\textrm{-mod}$ is of type $(1, 12; 4, 3)$, see \cite{sixty}.
	
	\medbreak 
	Consider the associated exact factorization $\C^*_\D \cong H\textrm{-mod} \bullet \D$. 
	As before, the fusion category $\D$ is either pointed or of type $(1, 2; 2, 1)$. Hence the possible types for $\C^*_\D$ are 
	$$\textrm{(i) } \; (1, 72; 4, 18), \qquad \textrm{(ii) } \; (1, 24; 2, 12; 4, 6; 8, 3).$$ 
	
	Write, as before, ${}_A\C_A \cong \C(\Aa_6, 1, T, \psi)$, where $T$ is a subgroup of $\Aa_6$ and $\psi$ is a 2-cocycle on $T$. As pointed out before, the group of invertible objects of $\C(\Aa_6, 1, T, \psi)$ is at most 36, hence possibility (i) is discarded. 
	
	\medbreak 		
	If possibility (ii) holds, then $T$ must be a Klein four group (Table \ref{sps-a6}). Observe that, for every simple object $X$ of $\C$, $\FPdim X$ divides $|T|$; see \cite[Proposition 5.5]{pjm}. 
	This contradicts the fact that $\C^*_\D$ has simple objects of dimension 8 as in case (ii).  This shows that $\Rep \Gamma$ is not normal in $\C = \C(\Aa_6, 1, \Gamma, 1)$ and therefore $\C$ is simple, as was to be shown. 
\end{proof}

\subsection{Dominant images of normal fusion subcategories need not be normal}\label{dom-img}
Proposition \ref{a5-notnormal} provides examples of images of normal fusion subcategories under dominant tensor functors which are not normal. 

\medbreak 
Let $G \cong \Aa_6$ and $\Aa_5 \cong \Gamma \subseteq \Aa_6$ as in the previous subsection. Let also $\C = \C(G, 1, \Gamma, 1)$.

\medbreak 
The canonical braiding of $\Rep G$ gives rise to an embedding of braided tensor categories $\Rep G \toto \Z(G)$ that fits into an equivariantization exact sequence of tensor categories
$$\Rep G \toto \Z(G) \toto \vect_G.$$

On the other hand, since $\C$ is Morita equivalent to $\rep G$, there is an  equivalence of braided tensor categories $\Z(G) \cong \Z(\C)$ \cite{schauenburg}. This equivalence induces a dominant tensor functor $$U_\Gamma: \Z(G) \toto \C,$$ such that $U_\Gamma(V) = k\Gamma \otimes V$. 
In particular $U_\Gamma (\Rep G) = \Rep \Gamma$. Since, by Proposition \ref{a5-notnormal}, $\C$ is simple this implies:

\begin{corollary}The image of the normal fusion subcategory $\Rep G$ of $\Z(G)$ under the dominant tensor functor $U_\Gamma: \Z(G) \to \C$ is not normal in $\C$.
\end{corollary}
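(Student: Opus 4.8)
The plan is to read off the conclusion directly from the material already assembled in this subsection, combined with the simplicity statement of Proposition \ref{a5-notnormal}. Two ingredients are in place and suffice: first, that the dominant tensor functor $U_\Gamma: \Z(G) \to \C$ carries the normal fusion subcategory $\Rep G \subseteq \Z(G)$ to the fusion subcategory $\Rep \Gamma \subseteq \C$, as recorded in the discussion preceding the statement; and second, that $\C = \C(\Aa_6, 1, \Gamma, 1)$ is simple with respect to rank-one module categories. Thus the corollary amounts to checking that $\Rep \Gamma$ is a legitimate candidate for a normal subcategory and then appealing to the proposition.

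First I would verify that $\Rep \Gamma$ is a proper, nontrivial fusion subcategory of $\C$. Since $\Gamma \cong \Aa_5$, we have $\FPdim \Rep \Gamma = 60$, whereas $\FPdim \C = 360$; hence $\Rep \Gamma \not\cong \vect$ and $\Rep \Gamma \neq \C$. In particular $\Rep \Gamma$ is a genuine candidate for a normal tensor subcategory in the sense of Definition \ref{exact-rk1}.

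Next I would invoke Proposition \ref{a5-notnormal}, according to which $\C$ is simple with respect to rank-one module categories; by definition this means that $\C$ admits no normal tensor subcategory in the sense of Definition \ref{exact-rk1}. Since $\Rep \Gamma$ is a proper nontrivial fusion subcategory, it cannot be such a normal subcategory, that is, $\Rep \Gamma$ is not normal in $\C$. As $\Rep \Gamma = U_\Gamma(\Rep G)$ is precisely the image of the normal subcategory $\Rep G \subseteq \Z(G)$, this is exactly the assertion of the corollary. There is no substantive obstacle here beyond matching the terminology: the conclusion refers to normality in the unqualified (rank-one) sense, which is precisely the notion excluded by the simplicity established in Proposition \ref{a5-notnormal}; all the analytic content has already been carried out in the proof of that proposition.
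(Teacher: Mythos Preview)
Your proof is correct and follows exactly the approach of the paper, which derives the corollary immediately from Proposition \ref{a5-notnormal} together with the identification $U_\Gamma(\Rep G) = \Rep \Gamma$ recorded just before the statement. You have simply spelled out the details (properness and nontriviality of $\Rep\Gamma$) that the paper leaves implicit.
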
 

\subsection{Composition series and composition factors}\label{jh-wrtm}

The definition of a composition series of a finite dimensional Hopf algebra (Definition \ref{def-compser}) has an obvious extension to the context of exact sequences of finite tensor categories with respect to module categories. 

\medbreak Let a \emph{composition series} of a finite tensor category $\C$ be defined as a
sequence of finite tensor categories $\C_1, \dots, \C_n$
defined, as before, as  $n = 1$ and $\C_1 = \C$,
if $\C$ is simple, while if $\C' \toto \C \toto \C'' \boxtimes \End(\M)$ is an exact sequence with respect to the $\C'$-module category $\M$ such that $\FPdim \C', \FPdim \C'' > 1$, and
$\C'_1,
\dots, \C'_m$, $\C''_1, \dots,$ $\C''_\ell$, are composition series of $\C'$ and $\C''$,
respectively, then $n = m+\ell$ and
$$\C_i = \begin{cases} \C'_i, \qquad 1\leq i \leq m, \\
\C''_{i-m}, \quad  m < i \leq m+\ell.\end{cases}$$
As before, the \emph{factors} and the \emph{length} of the series are, respectively, the finite tensor categories $\C_1, \dots, \C_n$ and the number $n$.

\medbreak 
It is clear that every finite tensor category admits such a composition series.

\begin{example}Let $G$ be a finite simple group. Then the fusion category $\Rep G$  is simple: in fact, $\Rep G$ has no proper fusion subcategories when $G$ is simple. Observe that this is not true for the category $\vect_G$,  since the finite simple group $G$ might have non-trivial exact factorizations into proper subgroups. In particular the condition on a fusion category being simple is not self-dual.
	
	\medbreak 	
	On the other hand, if $H$ is a normal subgroup of $G$, then the restriction functor gives rise to an exact sequence in the sense of \cite{tensor-exact}:
	$$\Rep G/H \toto \Rep G \toto \Rep H.$$
	Inductively, we find that if $G_1, \dots, G_n$ are the composition factors of $G$, then the fusion categories 
	$$\Rep G_1, \dots, \Rep G_n,$$ 
	are composition factors of $\Rep G$. 
\end{example}

\medbreak 
In what follows we show that  composition series of fusion categories thus defined fail to satisfy a Jordan-H\"older theorem.

\medbreak Let $n \in \mathbb{N}$. We shall denote by $\Ss_n$ and $\Aa_n$ the symmetric and alternating groups of degree $n$, respectively. 

\medbreak 
The proof of the following theorem relies on a result of Miller \cite{miller} that asserts that the alternating group $\Aa_6$ does not admit any exact factorization into proper subgroups. We summarize in Tables \ref{sps-a6} and \ref{sps-a5} the information about the subgroups of $\Aa_6$ and $\Aa_5$ used along the proof.

\begin{theorem}\label{a6-simple} The fusion category $\vect_{\Aa_6}$ is simple. \end{theorem}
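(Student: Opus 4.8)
The plan is to assume, for contradiction, that $\vect_{\Aa_6}$ fits into an exact sequence with respect to a module category $\M$,
$$\C' \overset{i}\toto \vect_{\Aa_6} \overset{F}\toto \C'' \boxtimes \End(\M),$$
with $\FPdim \C', \FPdim \C'' > 1$, and to show that this forces a nontrivial exact factorization of the group $\Aa_6$, contradicting the theorem of Miller. First I would record the two structural facts that make $\vect_{\Aa_6}$ easy to handle: since $\vect_{\Aa_6}$ is pointed, Lemma \ref{quotient-pt} gives that $\C''$ is pointed, say $\C'' \cong \vect^{\mu}_S$ with $1 < |S| < 360$; and since fusion subcategories of $\vect_{\Aa_6}$ correspond to subgroups of $\Aa_6$, the kernel $\C'$ is of the form $\vect_K$ for a proper nontrivial subgroup $K$ of $\Aa_6$, with $|K| = \FPdim \C'$ and $|S|\,|K| = 360$.

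Next I would pass to the associated exact factorization of the dual fusion category,
$$\C^*_{\C'' \boxtimes \M} = \C'' \bullet (\C')^*_\M,$$
and use that $\C^*_{\C'' \boxtimes \M}$ is categorically Morita equivalent to $\vect_{\Aa_6}$. The decisive step is to prove that $\C^*_{\C'' \boxtimes \M}$ is pointed; since $\C''$ is pointed, the criterion \cite[Theorem 3.8]{gelaki} identifies the simple objects of $\C^*_{\C'' \boxtimes \M}$ with the products of the (invertible) simples of $\C''$ and the simples of $(\C')^*_\M$, so this is equivalent to showing that $(\C')^*_\M$ is pointed. Once pointedness is established, Naidu's rigidity of the Morita class of $\vect_{\Aa_6}$ (the same input used in Proposition \ref{a5-notnormal}, \cite[Theorem 5.8]{naidu}) yields $\C^*_{\C'' \boxtimes \M} \cong \vect^{\omega'}_L$ with $L \cong \Aa_6$. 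The exact factorization then expresses $\vect^{\omega'}_{\Aa_6}$ as $\vect^{\mu}_S \bullet \vect^{\nu}_{K'}$ with $|S|, |K'| > 1$; reading off the unique decomposition of the invertible simple objects $\delta_g$, $g \in \Aa_6$, this is exactly an exact factorization $\Aa_6 = S K'$ into proper subgroups, which contradicts Miller's theorem and proves the claim.

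The hard part will be the pointedness of $(\C')^*_\M = \C(K, 1, \Lambda, \beta)$, where $(\Lambda, \beta)$ is the data of the $\vect_K$-module category $\M$: being Morita equivalent to $\vect_{\Aa_6}$ does not by itself force a category to be pointed (for instance $\Rep \Aa_6$ is not), so the exact-factorization structure must be exploited. I expect to dispose of this as in the proof of Proposition \ref{a5-notnormal}: write $\C^*_{\C'' \boxtimes \M} \cong \C(\Aa_6, 1, T, \psi)$ by Naidu's classification, bound its number of invertible objects by $|N_{\Aa_6}(T)/T|\,|\widehat T|$ via \cite[Theorem 5.2]{gel-nai} together with a direct inspection of the subgroups $T$ of $\Aa_6$, and compare this bound with the count $|S|\cdot(\text{number of invertibles of }(\C')^*_\M)$ forced by the factorization. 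This comparison should rule out every non-pointed $(\C')^*_\M$ and thereby close the remaining case. The simplicity of $\Aa_6$ as an abstract group, together with the absence of proper exact factorizations, is what ultimately makes each branch of the argument collapse.
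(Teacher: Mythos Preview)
Your proposal is correct and follows essentially the same route as the paper: identify $\C'=\vect_H$ and $\C''$ pointed, pass to the exact factorization $\D\cong\C(\Aa_6,1,T,\psi)=\C(H,1,T,\psi)\bullet\C''$ with $T\subseteq H$, use Miller together with \cite[Theorem 5.8]{naidu} to exclude the pointed case, and then compare the Gelaki--Naidu bound on invertibles of $\C(\Aa_6,1,T,\psi)$ against the count coming from the factorization. The paper carries out the last step more concretely than your sketch indicates: from $[\Aa_6:H]\mid\FPdim\D_{pt}\mid |N_{\Aa_6}(T)/T|\,|\widehat T|$ and the subgroup table one first pins down $H\cong\Aa_5$ and $T\in\{\Zz_2\times\Zz_2,\Zz_3,\Aa_4\}$, and then in each of these three cases the two computations of $\FPdim\D_{pt}$ (via $N_{\Aa_6}(T)$ versus $|S|\cdot\FPdim\C(H,1,T,\psi)_{pt}$ using $N_H(T)$) disagree.
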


\begin{proof} Suppose on the contrary that there exists an exact sequence with respect to an indecomposable $\C'$-module category $\M$:
	\begin{equation}\label{exact-a6}\C'\toto \vect_{\Aa_6} \overset{F}\toto \C'' \boxtimes \End(\M),
	\end{equation}
	such that $\FPdim \C', \FPdim \C'' > 1$.
	
Then $\C'$ and $\C''$ are fusion categories. 
Since the functor $F$ is dominant then $\C''$ is pointed, by Lemma \ref{quotient-pt}.
Also $\C'$ is pointed; moreover, $\C' = \vect_{H}$, for some subgroup $H$ of $\Aa_6$ such that $1 \subsetneq H \subsetneq \Aa_6$.
Hence $\M = \M(T, \psi) = \C'_{A(T, c)}$, for some subgroup $T$ of $H$ and some 2-cocycle $\psi$ on $T$.
In addition, $360 = \FPdim \vect_{\Aa_6} = \FPdim \C'  \FPdim \C'' = |H| \FPdim \C''$.
	
	\medbreak Consider the associated exact factorization of the dual fusion category $\D = (\vect_{\Aa_6})^*_{\C'' \boxtimes \End(\M)} \cong  {}_{A(T, \psi)}(\vect_{\Aa_6})_{A(T, \psi)} \cong \C(\Aa_6, 1, T, \psi)$:
	\begin{equation}
	\D = \C(\Aa_6, 1, T, \psi) = \C(H, 1, T, \psi) \bullet \C''.
	\end{equation}
	
	Since $\Aa_6$ admits no exact factorizations, then \cite[Theorem 3.4]{naidu} implies that $\D$ is not pointed. 
	Also, since $\FPdim \C'' > 1$, then $\FPdim \D_{pt} > 1$.
	Moreover $\C''$ is contained in $\D_{pt}$ and $\FPdim \C'' = [\Aa_6: H]$. Also  $[\Aa_6: H]$ divides $[\Aa_6:T]$ because $T$ is a subgroup of $H$.
	
	\medbreak 
	By \cite[Theorem 5.2]{gel-nai}, the group of invertible objects of $\D$ is an extension of $\widehat T$ by a certain subgroup $K$ of $N_{\Aa_6}(T)/T$.
	
	Then  $\FPdim \D_{pt}$ divides $|N_{\Aa_6}(T)/T| |\widehat T|$, and therefore  $[\Aa_6: H] | |N_{\Aa_6}(T)/T| |\widehat T|$.
	Thus $[\Aa_6: H]$ divides $\textrm{gcd} ([\Aa_6:T], |N_{\Aa_6}(T)/T| |\widehat T|)$. A direct inspection on the possible subgroups $T$ in Table \ref{sps-a6}, combined with the fact that $\Aa_6$ has no subgroups of index $2$, $3$ or $4$, implies that $[\Aa_6: H] = 6$ and $H \cong \Aa_5$. 
	
	\medbreak In addition the subgroup $T  \cong \Zz_2\times \Zz_2$ is a Klein four group, or $T \cong \Zz_3$, or $T \cong \Aa_4$. 
	
	\medbreak 
	Suppose first that $T \cong \Zz_2 \times \Zz_2$. Then $\FPdim \D_{pt}$ divides $|N_{\Aa_6}(T)/T| |\widehat T| = 24$. In addition $|\widehat T|$ divides $\FPdim \C(H, 1, T, \psi)_{pt}$ and therefore $24$ divides $\FPdim \D_{pt}$. Hence $\FPdim \D_{pt} = 24 = |N_{\Aa_6}(T)/T| |\widehat T|$. This implies that the subgroup $K$ in \cite[Theorem 5.2]{gel-nai} coincides with $N_{\Aa_6}(T)/T$.
	
	From the definition of $K$ in \cite{gel-nai}, we find that for every $g \in N_{\Aa_6}(T)$, the class of the 2-cocycle $\psi^g: T \times T \to k^\times$, defined by 
	$$\psi^g(h_1, h_2) = \psi(h_1, h_2) \, \psi(g^{-1}h_2^{-1}g, g^{-1}h_1^{-1}g), \quad h_1, h_2 \in T,$$  is trivial. 
	Hence the cocycle $\psi^g$ is also trivial for all $g \in N_H(T)$ and therefore $\FPdim \C(H, 1, T, \psi)_{pt} = |N_{H}(T)/T| |\widehat T| = 12$, since the group $N_{H}(T)/T$ is cyclic of order 3 (see Table \ref{sps-a5}).
	
	Combined with the fact that $\FPdim \C'' = 6$ and $\C''$ is pointed, this implies that $\FPdim \D_{pt} = 72$, and we arrive to a contradiction. Thus we have discarded this possibility for $T$.
	
	\medbreak 
	Assume next that $T \cong \Zz_3$. In this case the class of $\psi$ is trivial and $N_{\Aa_6}(T)/T$ is of order $6$. Thus the subgroup $K$ in \cite[Theorem 5.2]{gel-nai} coincides with $N_{\Aa_6}(T)/T$ and $\FPdim \D_{pt} = 18$. On the other hand, $N_{H}(T)/T$ is of order $2$ (Table \ref{sps-a5}) and therefore $\FPdim \C(H, 1, T, \psi)_{pt} = 6$. The exact factorization \eqref{exact-a6} then implies that $\FPdim \D_{pt} = 36$, which is a contradiction. Then this possibility is also discarded. 
	
	\medbreak Finally suppose that $T \cong \Aa_4$. In this case $\widehat T$ is of order $3$ and $N_{\Aa_6}(T)/T$ is of order $2$. Thus $\FPdim \D_{pt}$ divides $6$. But since $|\widehat T|$ also divides $\FPdim \C(H, 1, T, \psi)_{pt}$, and $\C''$ is pointed then $9$ divides $\FPdim \D_{pt}$, which is impossible.
	%and $N_{\Aa_6}(T)/T$ is of order $2$. 
	%Thus $K = N_{\Aa_6}(T)/T$ and $\FPdim \D_{pt} = 6$. The exact factorization %\eqref{exact-a6} implies that $\FPdim \C(\Aa_5, 1, T, \psi) = 1$, which is %impossible because the group $\widehat T$ is not trivial. 
	This discards this possibility as well. Thus we have shown that such an exact factorization \eqref{exact-a6} cannot exist and therefore $\vect_{\Aa_6}$ is a simple fusion category, as claimed.
\end{proof}

\begin{corollary}\label{cs-a6} The composition factors of a fusion category may be non-unique up to permutation. Moreover, a fusion category may admit composition series with different length.
\end{corollary}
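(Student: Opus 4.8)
The plan is to exhibit a single pointed fusion category carrying two composition series with non-isomorphic factors and different lengths, taking $\C=\vect_{\Ss_6}$. The mechanism is the correspondence recalled in Section \ref{s-emc}: each exact factorization $\vect_E=\vect_A\bullet\vect_B$ of a pointed category gives, by \cite[Theorem 4.1]{gelaki}, an exact sequence $\vect_A\toto\vect_E\toto\vect_B\boxtimes\End(\vect_A)$ with respect to the module category $\vect_A$. Hence any chain of exact factorizations of a finite group $E$ carried down to cyclic groups of prime order produces a composition series of $\vect_E$ whose factors are the categories $\vect_{C_p}$, each of which is simple since $C_p$ has no proper subgroup and therefore no nontrivial exact factorization.

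First I would record two genuinely different factorizations of $\Ss_6$. On one hand $\Ss_6=\Aa_6\cdot C_2$, with $C_2$ generated by a transposition; this yields the exact sequence $\vect_{\Aa_6}\toto\vect_{\Ss_6}\toto\vect_{C_2}\boxtimes\End(\vect_{\Aa_6})$, and since $\vect_{\Aa_6}$ is simple by Theorem \ref{a6-simple} and $\vect_{C_2}$ is simple, this is already a full composition series, with factors $\{\vect_{\Aa_6},\vect_{C_2}\}$ of length $2$. On the other hand $\Ss_6=\Ss_5\cdot C_6$, where $\Ss_5$ is a point stabilizer and $C_6$ is generated by a $6$-cycle, so that $\Ss_5\cap C_6=1$ and $|\Ss_5||C_6|=720$. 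Refining this by the further exact factorizations $\Ss_5=\Aa_5\cdot C_2$, $\Aa_5=\Aa_4\cdot C_5$, $\Aa_4=V_4\cdot C_3$, $V_4=C_2\cdot C_2$ and $C_6=C_2\times C_3$ carries the decomposition all the way to cyclic prime factors, giving a composition series whose factors are four copies of $\vect_{C_2}$, two of $\vect_{C_3}$ and one of $\vect_{C_5}$, of length $7$.

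The two series plainly have different lengths ($2$ versus $7$) and non-isomorphic factors: the first contains the factor $\vect_{\Aa_6}$ of Frobenius--Perron dimension $360$, whereas every factor of the second has prime dimension. This establishes both assertions of the corollary simultaneously (and passing to the Drinfeld center $\Z(\vect_{\Ss_6})$ gives the braided non-degenerate refinement of Remark \ref{bd-cs-a6}). The one point requiring care---and the sole place where real content enters---is that the factor $\vect_{\Aa_6}$ of the first series cannot be broken up further: if $\Aa_6$ admitted any exact factorization into proper subgroups, then $\vect_{\Aa_6}$ would itself split into prime factors and the first series would collapse to exactly the factors of the second. Thus the entire obstacle is the simplicity of $\vect_{\Aa_6}$, i.e.\ Miller's theorem as packaged in Theorem \ref{a6-simple}, which has already been proved; everything else is the routine verification of the displayed group factorizations and of the simplicity of the $\vect_{C_p}$.
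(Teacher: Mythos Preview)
Your proof is correct and follows essentially the same approach as the paper: exhibit two composition series of $\vect_{\Ss_6}$, one of length $2$ via $\Ss_6=\Aa_6\cdot C_2$ together with Theorem \ref{a6-simple}, and one of length $7$ by refining an exact factorization of $\Ss_6$ down to prime cyclic pieces using \cite[Theorem 4.1]{gelaki}. The paper's second chain goes through $\Ss_5\cdot\Zz_6$, $\Ss_4$, $\Ss_3$ and $\Zz_4$ (the last being split via the normal-subgroup exact sequence $\vect_{\Zz_2}\to\vect_{\Zz_4}\to\vect_{\Zz_2}$ rather than an exact factorization), while you go through $\Aa_5$, $\Aa_4$, $V_4$; both chains yield the same multiset of prime factors. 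One small point: your justification that $\vect_{C_p}$ is simple (``no nontrivial exact factorization'') is a little elliptic---the correct reason is that $\vect_{C_p}$ has no proper fusion subcategory at all, so no $\C'$ can appear in any exact sequence with respect to a module category---but the conclusion is of course right.
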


\begin{proof}
	The group $\Ss_6$ has an exact factorization $\Ss_6 = \Aa_6 \bullet \Zz_2$. This induces an exact factorization $\vect_{\Ss_6} = \vect_{\Aa_6} \bullet \vect_{\Zz_2}$. Therefore, from \cite[Theorem 4.1]{gelaki}, there is an exact sequence with respect to $\vect_{\Aa_6}$:
	\begin{equation}\label{first}\vect_{\Aa_6} \toto \vect_{\Ss_6} \toto \vect_{\Zz_2} \boxtimes \End(\vect_{\Aa_6}).
	\end{equation}
	
	By Theorem \ref{a6-simple}, $\vect_{\Aa_6}$ is simple. Then \eqref{first} gives a composition series for $\vect_{\Ss_6}$ with factors $\vect_{\Aa_6}$, $\vect_{\Zz_2}$.
	
	\medbreak 
	On the other hand, the exact factorizations
	\begin{align*}
	\Ss_6 & = \Ss_5 \bullet \Zz_6 \\
	& = \Ss_4 \bullet\Zz_5 \bullet \Zz_3 \bullet \Zz_2 \\
	& = \Ss_3 \bullet \Zz_4 \bullet \Zz_5 \bullet \Zz_3 \bullet \Zz_2 \\
	& = \Zz_3 \bullet \Zz_2 \bullet \Zz_4 \bullet \Zz_5 \bullet \Zz_3 \bullet \Zz_2\\
	& = \Zz_3 \bullet \Zz_2 \bullet \Zz_4  \bullet \Zz_5 \bullet \Zz_3 \bullet \Zz_2,
	\end{align*}
	induce exact factorizations of the corresponding pointed fusion categories. An iterated application of \cite[Theorem 4.1]{gelaki} implies that $\vect_{\Ss_6}$ has a composition series with factors $\vect_{\Zz_3}$, $\vect_{\Zz_2}$, $\vect_{\Zz_2}$, $\vect_{\Zz_2}$, $\vect_{\Zz_5}$, $\vect_{\Zz_3}$, $\vect_{\Zz_2}$. Then we see that the factors, and also  the length, of these composition series are not unique. 
\end{proof}

\begin{remark}\label{bd-cs-a6} The statement of Corollary \ref{cs-a6} remains valid when restricted to the class of non-degenerate braided fusion categories, namely, also in this case the compositions factors may  be non-unique up to permutation and composition series may have different lengths.
	
	\medbreak As an example, consider the Drinfeld center $\Z(\vect_{\Ss_6})$. 
	Thus $\Z(\vect_{\Ss_6})$ coincides with the representation category of the Drinfeld double $D(k^{\Ss_6})$ and there is an exact sequence of finite dimensional Hopf algebras 
	\begin{equation*}k \toto k^{\Ss_6} \toto D(k^{\Ss_6}) \toto k{\Ss_6} \toto k.
	\end{equation*}
	Therefore the non-degenerate braided fusion category $\Z(\vect_{\Ss_6})$ fits into an exact sequence in the sense of \cite{tensor-exact}:
	\begin{equation*} \Rep{\Ss_6} \toto \Z(\vect_{\Ss_6}) \toto \vect_{\Ss_6}.
	\end{equation*}	
	In addition we have an exact sequence
	$$\Rep{\Zz_2} \toto \Rep{\Ss_6} \toto \Rep{\Aa_6},$$
	and since $\Rep{\Aa_6}$ is a simple fusion category, the composition factors of $\Rep{\Ss_6}$ are $\Rep{\Zz_2}$ and $\Rep{\Aa_6}$. 
	
	\medbreak 	
	The proof of Corollary \ref{cs-a6} gives two composition series of $\vect_{\Ss_6}$ that give rise to two composition series of $\Z(\vect_{\Ss_6})$ with factors
	$$\Rep{\Zz_2}, \Rep{\Aa_6}, \vect_{\Aa_6}, \vect_{\Zz_2},$$
	on the one hand, and
	$$\Rep{\Zz_2}, \Rep{\Aa_6}, \vect_{\Zz_3}, \vect_{\Zz_2}, \vect_{\Zz_2}, \vect_{\Zz_2}, \vect_{\Zz_5}, \vect_{\Zz_3}, \vect_{\Zz_2}$$
	on the other hand. Then the series have different length and the factors are not unique also in this case. 
\end{remark}

\begin{table}[t]
	\begin{center}
		\begin{tabular}{|p{6cm}|p{2,5cm}|p{0,5cm}|p{0,5cm}|p{1,8cm}|}
			\hline  {\bf \quad \newline Automorphism class \newline representative} & {\bf \quad \newline Isomorphism class of $T$ } & {\bf \quad \newline $|T|$ \newline \quad} & {\bf \quad \newline $|\widehat{T}|$ \newline \quad } & {\bf \quad \newline $[N_G(T):T]$ \newline \quad }  \\ 
			\hline \{e\} & Trivial & 1 & 1   & 360 \\
			\hline  $\langle (12)(34) \rangle$ & $\Zz_2$ & 2 & 2  &  4\\ 
			\hline  $\langle (12)(34), (13)(24) \rangle$ & $\Zz_2 \times \Zz_2$ & 4 & 4  &6 \\ 
			\hline $\langle (12)(34), (12)(56) \rangle$ & $\Zz_2 \times \Zz_2$ & 4 & 4 & 6 \\ 
			\hline $\langle (1234)(56) \rangle$ & $\Zz_4$ & 4 & 4 & 2\\ 
			\hline $\langle (1234)(56), (13)(56) \rangle$  & $D_4$ & 8 & 4 & 1 \\ 
			\hline $\langle (123)\rangle$ & $\Zz_3$ & 3 & 3 & 6\\ 
			\hline $\langle (123)(456)\rangle$ & $\Zz_3$ & 3 & 3  & 6 \\
			\hline $\langle (123), (456)\rangle$ & $\Zz_3 \times \Zz_3$ & 9 &  9 & 4\\ 
			\hline $\langle (123)(456), (12)(45)\rangle$ & $\Ss_3$ & 6 & 2 & 1\\ 
			\hline $\langle (123), (12)(45)\rangle$ & $\Ss_3$ & 6 & 2 & 1\\ 
			\hline $\langle (12)(34), (123)\rangle$ & $\Aa_4$ & 12 & 3 & 2				-- \\ 
			\hline $\langle (123)(456), (14)(25), (14)(36)\rangle$ & $\Aa_4$ & 12 & 3 & 2 \\ 
			\hline $\langle (1234)(56), (12)(56)\rangle$ & $\Ss_4$ & 24 & 2 & 1 \\ 
			\hline $\langle (34)(56), (12)(56), (135)(246), (35)(46)\rangle$ & $\Ss_4$ & 24 & 2 & 1 \\
			\hline $\langle (123), (456), (12)(45)\rangle$ & $(\Zz_3\times\Zz_3)\rtimes \Zz_2$ & 18 & 2 & 2 \\ 
			\hline $\langle (123), (456), (23)(56), (14)(2536)\rangle$ & $(\Zz_3\times\Zz_3)\rtimes \Zz_4$ & 36 & 4  & 1 \\ 
			\hline $\langle (12345)\rangle$ & $\Zz_5$ & 5 & 5 & 2 \\ 
			\hline $\langle (12345), (25)(34)\rangle$ & $D_5$ & 10 & 2 & 1 \\ 
			\hline $\langle (12345), (123)\rangle$ & $\Aa_5$ & 60 & 1 &	1\\ 
			\hline $\langle (12345), (14)(56)\rangle$ & $\Aa_5$ & 60 & 1 & 1 \\ 
			\hline Whole group & $\Aa_6$ & 360 & 1 & 1\\ 
			\hline
		\end{tabular}
	\end{center}
	
	\
	
	\caption{Subgroups $T$ of the alternating group $\Aa_6$}\label{sps-a6}
\end{table}

\begin{table}[t]
	\begin{center}
		\begin{tabular}{|p{6cm}|p{2,5cm}|p{1,8cm}|}
			\hline  {\bf \quad \newline Automorphism class \newline representative} & {\bf \quad \newline Isomorphism class of $T$ } &  {\bf \quad \newline $[N_G(T):T]$ \newline \quad }  \\ 
			\hline \{e\} & Trivial &  60 \\
			\hline  $\langle (12) \rangle$ & $\Zz_2$ &  2\\ 
			\hline  $\langle (12)(34) \rangle$ & $\Zz_2 \times \Zz_2$ &  3 \\ 
			\hline $\langle (123)\rangle$ & $\Zz_3$  & 2 \\ 
			\hline $\langle (123), (12)(45) \rangle$ & $\Ss_3$  & 1\\ 
			\hline $\langle (12)(34), (123)\rangle$  & $\Aa_4$  & 1 \\ 
			\hline $\langle (12345)\rangle$ & $\Zz_5$  & 2\\ 
			\hline $\langle (12345), (25)(34)\rangle$ & $D_5$  & 1 \\ 
			\hline Whole group & $\Aa_5$  & 1\\ 
			\hline
		\end{tabular}
	\end{center}
	
	\
	
	\caption{Subgroups $T$ of the alternating group $\Aa_5$}\label{sps-a5}
\end{table}

\subsection{Questions}\label{preguntas}

We think it is interesting to determine classes of finite tensor categories which are closed under extensions. For instance, the class of fusion categories is closed under extensions and so is the class of weakly integral and integral finite tensor categories. 

\medbreak 
On the other hand, it is known that the class of group-theoretical fusion categories is not closed under extensions: Indeed, let $p$ be an odd prime number and let $H$ be one of the non-group-theoretical semisimple Hopf algebras of dimension $4p^2$ constructed by D. Nikshych in  \cite{nik}. Then $H$ fits into an exact sequence of Hopf algebras
$$k \toto k^{\Zz_2} \toto H \toto A_p \toto k,$$
where $A_p$ is a certain abelian extension of $\Zz_2$ by $\Zz_p \times \Zz_p$.

Since every abelian extension is group-theoretical\footnote{A Hopf algebra $H$ is called group-theoretical if the category $H\modd$ (or equivalently, the category $\CoRep H$)  is group-theoretical.} \cite{gp-ttic}, then the Hopf algebra $A_p$
is group-theoretical. Then $H\modd$ is a non-group-theoretical fusion category that fits into an exact sequence  of group-theoretical fusion categories
$$A_p\modd \toto H\modd \toto \vect_{ \Zz_2}.$$
This exact sequence is in fact an equivariantization exact sequence, by construction of $H$. See \cite[Section 5]{nik}.

\medbreak 
A fusion category $\C$ is called weakly group-theoretical if it is Morita equivalent to a nilpotent fusion category \cite{ENO2}. The class of weakly group-theoretical fusion categories is known to be closed under a number of operations, like taking a fusion subcategory or dominant image, taking Deligne tensor product and Drinfeld center. It is not known if it closed under extensions: 

\begin{question} \emph{(\cite[Question 4.9]{gelaki}.)} Is the class of weakly group-theoretical fusion categories closed under extensions?
\end{question} 

It is known that the class of weakly group-theoretical fusion categories is closed under equivariantizations and group graded extensions \cite[Proposition 4.1]{ENO2}. 
Furthermore, it also closed under matched pair crossed extensions \cite[Corollary 4.6]{char-crossed-action}. 

\medbreak 
Let $G$ be a finite group. In \cite{char-crossed-action} we showed that if $\C$ is a fusion category fitting into an abelian exact sequence $\Rep G \toto \C \toto \D$ or into an exact sequence $\vect_G \toto \C \toto \D$, then $\C$ is  weakly group-theoretical if and only if $\D$ is weakly group-theoretical.
As a consequence, every  semisolvable semisimple Hopf algebra, as introduced in \cite{MW}, is weakly group-theoretical.  

\medbreak 
Recall that a fusion category is said to satisfy the \emph{Frobenius property} if the ratio $\FPdim \C / \FPdim X$ is an algebraic integer, for every simple object $X$ of $\C$. It is known that every pre-modular fusion category satisfies the Frobenius property \cite{ENO2}. 

\begin{question}\label{q-frobppty} Is the class of fusion categories with the Frobenius property closed under extensions?
\end{question}

It follows from \cite[Theorem 1.5]{ENO2} that the class of fusion categories with the Frobenius property is closed under equivariantizations and group graded extensions, and every weakly group-theoretical fusion category satisfies the Frobenius property. The answer to  Question \ref{q-frobppty} is not known in general even in the context of Hopf algebra extensions. 

\medbreak 
Another interesting class of tensor categories is that of \textit{Frobenius tensor categories}: these are tensor categories in which every simple object has an injective hull (equivalently, a projective cover) \cite[Subsection 2.3]{AEC}. For instance, finite tensor categories and semisimple tensor categories are Frobenius categories. 

\medbreak 
Examples of Frobenius categories are provided by the categories of finite dimensional comodules over co-Frobenius Hopf algebras: that is, Hopf algebras $H$ endowed with a nonzero integral $H \to k$. 

It is known that every Hopf algebra $H$ fitting into a strictly exact sequence of Hopf algebras $k \toto H' \toto H \toto H'' \toto k$, such that $H'$ and $H''$ are  co-Frobenius Hopf,  is co-Frobenius \cite[Theorem 2.10]{AC}. This result allows to construct examples of this kind of Hopf algebras from smaller examples. We do not known the answer to the corresponding question for tensor categories:

\begin{question} Is the class of Frobenius tensor categories closed under extensions?
\end{question}

Regarding the notion of simplicity of a finite tensor category, the following is a natural question, about which very little is known: 

\begin{question} Is it possible to classify simple (finite) tensor categories? 
\end{question}

Finally, motivated by the examples of Subsection \ref{jh-wrtm}, we ask:

\begin{question} Can the definition of a composition series be reformulated in order that the Jordan-H\" older theorem holds for finite tensor categories?
\end{question}

An analogue of the Jordan-H\" older theorem was proved in \cite{jh-wgt} for weakly group-theoretical fusion categories. The definition of a composition series for this kind of category is given in terms of group equivariantizations and group graded extensions and the composition factors, which are Morita invariants, are simple finite groups.

\bibliographystyle{amsalpha}

\end{document}